\theoremstyle{plain} 
\newtheorem{thm}{Theorem}[section] 
\newtheorem{cor}[thm]{Corollary} 
\newtheorem{lem}[thm]{Lemma} 
\newtheorem{prop}[thm]{Proposition}
\theoremstyle{definition} 
\newtheorem{defn}{Definition}[section] 
\newtheorem{exem}{Example} [section]
\newtheorem{notation}{Notation}[section]
\theoremstyle{remark} 
\newtheorem{rem}{Remark}
\begin{document}

\title[Uniform homotopy invariance of Roe Index of the signature operator]{Uniform homotopy invariance of Roe Index of the signature operator}


\author*[1,2]{\pfx{Dr} \fnm{Stefano} \sur{Spessato} \sfx{ORCID: 0000-0003-0069-5853}\email{stefano.spessato@unicusano.it }}

\affil*[1]{\orgname{Università degli Studi "Niccolò Cusano"}, \orgaddress{\street{via don Carlo Gnocchi 3}, \city{Rome}, \postcode{00166}, \country{Italy}}}


\abstract{In this paper we study the Roe index of the signature operator of manifolds of bounded geometry. Our main result is the proof of the uniform homotopy invariance of this index. In other words we show that, given an orientation-preserving uniform homotopy equivalence $f:(M,g) \longrightarrow (N,h)$ between two oriented manifolds of bounded geometry, we have that  $f_\star(Ind_{Roe}D_M) = Ind_{Roe}(D_N)$. Moreover we also show that the same result holds considering a group $\Gamma$ acting on $M$ and $N$ by isometries and assuming that $f$ is $\Gamma$-equivariant. The only assumption on the action of $\Gamma$ is that the quotients are again manifolds of bounded geometry.}

\keywords{Roe Index, bounded geometry, signature operator, coarse algebra}



\maketitle
\subsection*{Acknowledgments}
I would like to thank Paolo Piazza and Vito Felice Zenobi for their help and for the several discussion that we had. Moreover I also thank Thomas Schick for his suggestions and support.
\\
\\Data sharing not applicable to this article as no datasets were generated or analysed during the current study.

\section*{Introduction}
This is the second of two papers about uniform homotopy invariants. In the first one \cite{Spes} we studied the $L^{q,p}$-cohomology, in the second one we study the Roe Index of the signature operator. Two versions, with some stronger assumptions, of these works can be found in the Ph.D. thesis of the author \cite{thesis}.
\\
\\Let us introduce our geometric setting: we study oriented manifolds of bounded geometry, i.e. Riemannian manifolds with a lower bound on the injectivity radius and some upper bounds on the norms of the covariant derivatives of the sectional curvature. On these manifolds we consider a \textit{uniformly proper discontinuous and free} (or \textit{FUPD}) action of group $\Gamma$ of isometries. A FUPD action is an action such that the quotient space is a manifold of bounded geometry. 
\\We are interested in studying these manifolds up to uniform homotopy. A \textit{uniform homotopy equivalence} $f:(M,g) \longrightarrow (N,h)$ is a uniformly continuous homotopy equivalence admitting a homotopy inverse $g$ such that $g$ itself and the homotopies between the compositions and the identities are uniformly continuous. As proved in Proposition 1.7 of \cite{Spes}, we have that each uniform homotopy equivalence can be approximate by a smooth lipschitz map whose derivatives, in normal coordinates, are uniformly bounded. Moreover if $f$ is $\Gamma$-equivariant, then also its approximation is $\Gamma$-equivariant. All the definitions and properties needed about this geometric setting can be found in the first paper \cite{Spes}.
\\Let us denote by $\mathcal{L}^2(M)$ the space of squared-integrable complex forms over the oriented manifold of bounded geometry $(M,g)$. Let us suppose $n = dim(M)$. Then we can consider the \textit{signature operator} $D_M: dom(D_M) \subset \mathcal{L}^2(M) \longrightarrow \mathcal{L}^2(M)$. As showed in \cite{Anal}, the signature operator induces a class in the $[n]$-th\footnote{$[n] = 0$ if $n$ is even, $1$ otherwise} K-theory group of the Coarse algebra $C^*(M)^\Gamma$. This class is called the \textit{Roe Index} and it is denoted by $Ind_{Roe}D_M$. Our main goal in this paper is to prove that, given a $\Gamma$-equivariant uniformly homotopy equivalence $f:(M,g) \longrightarrow (N,h)$ between manifolds of bounded geometry, we have
\begin{equation}
f_\star (Ind_{Roe}D_M) = Ind_{Roe}D_N.
\end{equation}
This is the strategy of the proof.
\\Our first step is to introduce a specific coarse structure on the disjoint union of $(M,g)$ and $(N,h)$. Indeed, \textit{a priori}, $M \sqcup N$ doesn't have a metric coarse structure induced by the metric because it is not connected (and so it isn't a metric space). The existence of this coarse structure allow us to define the coarse algebra $C^*_f(M \sqcup N)^\Gamma$ and the structure algebra $D^*_f(M \sqcup N)^\Gamma$.
\\The second step is to prove some properties on smoothing operators between $\mathcal{L}^2$-spaces of manifolds of bounded geometry. In particular we prove some conditions on the kernel of an operator $A$ such that $A$ and $dA$ are bounded. Here $d$ is the minimal closure of the exterior derivative of compactly supported smooth forms.
\\The third step is to define a signature operator $D_{M \sqcup N}$ and to find a perturbation $P$ in $C^*_f(M \sqcup N)^\Gamma$ such that $D_{M \sqcup N} + P$ has a $\mathcal{L}^2$-bounded inverse. In order to find such a perturbation we follow the works of \cite{hils}, \cite{piazzaschick}, \cite{vito}, \cite{wahl}, \cite{fukum} and we have to define some operators in $C^*_f(M \sqcup N)^\Gamma$. One of this operators is the operator $T_f$ defined in \cite{Spes}.
\\In the last step we conclude by showing that
\begin{equation}
 Ind_{Roe}(D_{M \sqcup N}) = Ind_{Roe}(D_{M \sqcup N} + P) = 0  
\end{equation}
and by proving the existence of a morphism $H: K_{n}(C^*_f(M \sqcup N)^\Gamma) \longrightarrow K_{n}(C^*(N)^\Gamma)$  such that
\begin{equation}
H_\star(Ind_{Roe}(D_{M \sqcup N})) = f_\star(Ind_{Roe}(D_{M})) - Ind_{Roe}(D_{N}).
\end{equation}

\section{Coarse geometry}
	\subsection{Coarse structures}
The next definitions can be found in \cite{Anal}. Let $X$ be a set
	\begin{defn}
		A \textbf{coarse structure} over $X$ is a provision, for each set $S$ of an equivalence relation on the set of maps form $S$ to $X$. If $p_1,p_2: S \rightarrow X$ are in relation then they are said to be \textbf{close} and it is also required that if 
		\begin{itemize}
			\item if $p_1$ and $p_2: S \longrightarrow X$ are close and $q:S' \longrightarrow S$ is another map then $p_1 \circ q$ and $p_2 \circ q$ are close too.
			\item If $S= S_1 \cup S_2$, $p_1, p_2:S\longrightarrow X$ are maps whose restrictions to $S_1$ and $S_2$ are close, then $p_1$ and $p_2$ are close,
			\item two constant maps are always close to each other.
		\end{itemize}
		If $X$ has a coarse structure, then is a \textbf{coarse space}.
	\end{defn}
Before of introducing two examples of coarse structure, we need to recall some notions from the first paper \cite{Spes}. A map $f:(X,d_X) \longrightarrow (Y,d_Y)$ between metric spaces is \textit{uniform} if it is uniformly continuous and uniformly proper. With \textit{uniformly proper} we mean that the diameter of the preimage of a subset $A$ only depends on the diameter of $A$. Moreover, if there is a group $\Gamma$ acting on $(X,d_X)$ and on $(Y,d_Y)$, then two maps $f_1$ and $f_2:(X,d_X) \longrightarrow (Y,d_Y)$ are \textit{$\Gamma$-uniformly homotopic} and we denote it by $f_1 \sim_\Gamma f_2$ if there is a uniform homotopy $H$ between them.
	\begin{exem}\label{bru}
		If $(X,d_X)$ is a metric space, then it's possible to define a \textit{metric} coarse structure in the following way: let $p_1$ e $p_2: S \longrightarrow X$ be two functions, then
		\begin{equation}
		p_1 \sim p_2 \iff \exists C \in \mathbb{R} \mbox{           such that           }\forall x \in S \mbox{                         }d_X(p_1(x), p_2(x)) \leq C.
		\end{equation}
		Let us consider two maps $p_1$ and $p_2:(Y, d_Y) \longrightarrow (X, d_X)$ and let $\Gamma$ be a group which acting FUPD on $X$ and on $Y$. Let us suppose that $p_1$ and $p_2$ uniform maps such that $p_1 \sim_{\Gamma} p_2$, then we have that $ p_1 \sim p_2$. Indeed, since Proposition 1.6 and Lemma 1.9 of \cite{Spes}, there are two lipschitz maps $p_1'$ and $p_2'$ such that $p_i' \sim p_i$ and there is a lipschitz homotopy $H$ connecting $p_1'$ and $p_2'$. Then we have that $p_1' \sim p_2'$, indeed
		\begin{equation}
		d_X(p_1'(x), p_2'(x)) = d(h(x,0), h(x,1)) \leq C_H,
		\end{equation}
		where $C_H$  is the lipschitz constant of $H$. This implies that $p_1 \sim p_2$
	\end{exem}
\begin{exem}\label{coarseexem}
	Consider a Riemannian manifold $X := M \sqcup N$ where $M$ and $N$ are two connected Riemannian manifolds and let $f:M \longrightarrow N$ be a uniform-homotopy equivalence.
	\\In this case $X$ is not a metric space in a natural way and so it has not \textit{a priori} a metric coarse structure. We will define a coarse structure using the map $f$.
	\\Let $p_1$ e $p_2: S \longrightarrow X$ be two maps, then
	\begin{equation}
	p_1 \sim p_2 \iff \exists C \in \mathbb{R} \forall x \in S s.t. d_N((f \sqcup id_N)\circ p_1(x), (f \sqcup id_N)\circ p_2(x)) \leq C.
	\end{equation}
\end{exem}
	\begin{defn}
		Let $X$ be a coarse space and let $S \subseteq X\times X$. Then $S$ is \textbf{controlled} if the projections $\pi_1, \pi_2: S \subseteq X$ are close. A family of subsets $\mathcal{U}$ of $X$ is \textbf{uniformly bounded} if $\cup_{U \in \mathcal{U}}U \times U$ is controlled. A subset $B$ of $X$ is \textbf{bounded} if $B\times B$ is controlled.
	\end{defn}
	\begin{defn}
		Let $X$ be a locally compact topological space. A coarse structure on $X$ is \textbf{proper} if
		\begin{itemize}
			\item $X$ has an uniformly bounded open covering,
			\item every bounded subset of $X$ has compact closure.
		\end{itemize}
	\end{defn}
	\begin{defn}
		A coarse space $X$ is \textbf{separable} if $X$ admits a  countable, uniformly bounded, open covering.
	\end{defn}
	\begin{rem}
		Given a metric space $(X, d_X)$, then the metric coarse structure is proper and separable. Moreover if $M$ and $N$ are two connected Riemannian manifolds and $f:M \longrightarrow N$ is a uniform homotopy equivalence, then also the coarse structure defined in the Example \ref{coarseexem} is proper and separable.
	\end{rem}
	\subsection{Roe and Structure Algebras}
	Consider a locally compact topological space $X$ endowed with a proper and separable coarse structure. Moreover, let $\rho: C_0(X) \longrightarrow B(H)$ be a representation of $C_0(X)$, where $H$ is a separable Hilbert space. Then
	\begin{defn}
		The \textbf{support} of an element $v$ in $H$ is the complement in $X$ of all open sets $U$ such that $\rho(f)v = 0$ for all $f$ in $C_0(U)$.
	\end{defn}
	\begin{defn}
		The \textbf{support} of an operator $T$ in $B(H)$ is the complement in $X\times X$ of the union of $U\times V$ such that
		\begin{equation}
		\rho(f)T\rho(g) = 0,
		\end{equation}
		for all $f$ in $C_0(U)$ and for all $g$ in $C_0(V)$. An operator is \textbf{controlled} or \textbf{finite propagation} if its support is a controlled set.
	\end{defn}
	\begin{exem}
		Consider $(X,g)$ a Riemannian manifold. Let $H_X$ be Hilbert space $H_X := \mathcal{L}^2(X)$ and let $\rho$ be the representation of $C_0(X)$ on $H_X$ given by the point-wise multiplication. With respect to the metric coarse structure, an operator $T$ is controlled if and only if
		\begin{equation}
		\exists C \in \mathbb{R} \forall \phi, \psi \in C_0(X) s.t. d_X(supp(\phi), supp(\psi)) \geq C \implies \phi T \psi = 0.
		\end{equation}
	\end{exem}
\begin{exem}
	Consider $M$ and $N$ as in Example \ref{coarseexem} and let us impose $H_X := \mathcal{L}^2(M\sqcup N)$. Fix the point-wise multiplication  as representation of $C_0(X)$ on $H_X$. Then an operator $T$ is controlled if there is a number $C > 0$ such that for each $\phi, \psi \in C_0(X)$ we have that
	\begin{equation}
	d_N((f \sqcup id_N)(supp(\phi)), (f \sqcup id_N)(supp(\psi))) \geq  C \implies \phi T \psi = 0.
	\end{equation}
\end{exem}
The next two definitions can be found in \cite{siegel}.
\begin{defn}
	Let $X$ be a coarse space. Consider $H$ be a Hilbert space equipped with a representation $\rho: C_0(X) \longrightarrow B(H)$, a group $\Gamma$ and a unitary representation $U : \Gamma \longrightarrow B(H)$. We say that the the triple $(H, U, \rho)$ is a \textbf{$\Gamma$-equivariant $X$-module} or simply a \textbf{$(X, \Gamma)$-module} if 
	\begin{equation}
		U(\gamma)\circ \rho(f) = \rho(\gamma^*(f)) \circ U(\gamma),
	\end{equation}
	for every $\gamma \in \Gamma$, $f \in C_0(X)$.
\end{defn}
\begin{defn}
A represention $\rho$ of a group $\Gamma$ in $B(H)$ for some Hilbert space $H$ is \textbf{nondegenerate} if the set
\begin{equation}
\{\rho(\gamma)h \in H \vert \gamma \in \Gamma, h \in H\}
\end{equation}
is dense in $H$.
\end{defn}
\begin{defn}
	A representation $\rho$ of a group $\Gamma$ in $B(H)$ for some Hilbert space $H$ is \textbf{ample} if it is nondegenerate and $\rho(g)$ is a compact operator if and only if $g = 0$.
	\\If the Hilbert space $H$ is separable and the representation $\rho$ is the countable direct sum of a fixed ample representation, then $\rho$ is said to be \textbf{very ample}.
\end{defn} 
\begin{exem}\label{perna}
	Let us consider a Riemannian manifold $X$ with $dim(X) > 0$ and consider the Hilbert space $H := \mathcal{L}^2(X)$. Let us define the representation $\rho_X: C_0^\infty(X) \longrightarrow B(\mathcal{L}^2(X))$ for each $\phi$ in $C_0^\infty(X)$ as 
	\begin{equation}
	\rho_X(\phi)(\alpha) := \phi \cdot \alpha
	\end{equation}
	where $\alpha$ is a differential form $\alpha$ in $\mathcal{L}^2(X)$. Observe that, because of Hadamard-Schwartz inequality \cite{Sbordone}, $\rho_X(\phi)$ is a $\mathcal{L}^2$-bounded operator. In particular we have that $\rho$ is an ample representation of $C_0^\infty(X)$.
	\\Let us consider a subgroup of the isometries of $X$ called $\Gamma$. Then, the representation
	\begin{equation}
		\begin{split}
			U : \Gamma &\longrightarrow B(\mathcal{L}^2(X))\\
			\gamma &\longrightarrow \gamma^*.
		\end{split}
	\end{equation}
	is well defined since $\gamma^*$ is a $\mathcal{L}^2$-bounded operator.
	\\So the triple $(\mathcal{L}^2(X), U, \rho)$ is a $(X, \Gamma)$-module.
\end{exem}
\begin{exem}\label{serna}
Consider $(X,g)$ an oriented Riemannian manifold (possibly not connected). Consider the Hilbert space $H_X := \mathcal{L}^2(X) \otimes l^2(\mathbb{N})$. We have that an element in $H_X$ can be seen as sequence of $\{\alpha_i\}$ where $\alpha_i \in \mathcal{L}^2(X)$ such that
\begin{equation}
\sum_{n \in \mathbb{N}} \vert\vert\alpha_i\vert\vert^2 < + \infty.
\end{equation}
Consider, moreover, $\rho_X: C^{\infty}_{0}(X) \longrightarrow B(H_X)$ for each $\phi$ in $C^{\infty}_{0}(X)$ and for each $\{\alpha_i\}$ as
\begin{equation}
\rho_X(\phi)(\{\alpha_i\}) := \{\phi \cdot \alpha_i\}.
\end{equation}
Consider $\Gamma$ as a subgroup of isometries of $X$. Then we can define a representation $U_X: \Gamma \longrightarrow B(H_X)$ as
\begin{equation}
U_X(\gamma)(\{\alpha_i\}) := \{\gamma^* \alpha_i\}.
\end{equation}
Then $(H_X, \rho_X, U_X)$ defined in this way is a very ample $(X, \Gamma)$-module.
\end{exem}
\begin{exem}\label{terna}
Let $(X,g)$ be an oriented Riemannian manifold such that $dim(X)$ is even. Let us consider the Hodge star operator $\star$. Then we can define the chirality operator $\tau$ as $\tau \alpha := i^{\frac{n}{2}}\star$ if $n$ is even and $\tau \alpha := i^{\frac{n +1}{2}}\star$ otherwise. We have that $\tau$ defines, for each $p$ in $X$, an involution $\tau_p: \Lambda^*_p(X) \longrightarrow \Lambda^*_p(X)$ where $\Lambda^*_p(X)$ is the fiber in $p$ of the complexified of the exterior bundle of $X$. Let us denote by $V_{\pm}$ as the bundles whose fibers are the $\pm1$-eigenspaces of $\tau_p$.
\\Let us define the Hilbert space $\mathcal{L}^2(V_{\pm})$ as the closure of the space of compactly supported section $\Gamma_c(M, V_{\pm})$ respect to the bundle metric induced by the Riemannian metric $g$.
\\We obtain an orthogonal splitting $\mathcal{L}^2(M) = \mathcal{L}^2(V_{+}) \oplus \mathcal{L}^2(V_{-}).$
\\Let us define the vector space
\begin{equation}
H_X := \bigoplus_{i \in \mathbb{Z}_{<0}} \mathcal{L}^2(V_{-}) \oplus \bigoplus_{i \in \mathbb{N}} \mathcal{L}^2(V_{+}).
\end{equation}
An element in $H_X$ is a sequence $\{\alpha_i\}$ indexed by $i \in \mathbb{Z}$ of $\alpha_i \in \mathcal{L}^2(V_{-})$ if $i < 0$ and $\alpha_i \in \mathcal{L}^2(V_{+})$ if $i > 0$ such that
\begin{equation}
\sum_{i \in \mathbb{Z}} \vert\vert\alpha_i\vert\vert_{\mathcal{L}^2(X)}^{2} < + \infty.
\end{equation}
Consider, moreover the representation $\rho_X: C^\infty_0(X) \longrightarrow B(H_X)$ defined for each $\phi$ in $C^{\infty}_0(X)$ as
\begin{equation}
\rho_X(\phi)(\{\alpha_i\}) := \{\phi \cdot \alpha_i\}
\end{equation}
and, given a subgroup $\Gamma$ of isometries of $X$, let $U: \Gamma \longrightarrow B(H_X)$ be
\begin{equation}
U_X(\gamma)(\{\alpha_i\}) := \{\gamma^* \alpha_i\}.
\end{equation}
Then we have that $(H_X, \rho_X, U_X)$ is a very ample $(X, \Gamma)$-module.
\end{exem}
	\begin{defn}
		Let us consider a coarse space $X$ and let $(H, U, \rho)$ be a $\Gamma$-equivariant $X$-module. Then an operator $T$ in $B(H)$ is  \textbf{pseudo-local} if for all $f$ in $C_0(X)$ we have that $[\rho (f), T]$ is a compact operator.
	\end{defn}
	\begin{defn}
		An operator $T \in B(H)$ is \textbf{locally compact} if $T\rho(f)$ and $\rho(f)T$ are compact operators for all $f \in C_0(X)$.
	\end{defn}
	Fix a coarse space $X$ and a $\Gamma$-equivariant $X$-module $(H, U, \rho)$. We can define the following algebras.
	\begin{defn}
		The algebra $D^{\star}_{c, \rho}(X,H)$ is given by
		\begin{equation}
		\{T \in B(H)\vert T \textit{                    is pseudo-local and controlled               }\}.
		\end{equation}
	\end{defn}
	\begin{defn}
		We denote by $C^{\star}_{c, \rho}(X)$ the algebra
		\begin{equation}
		\{T \in D^{\star}_{c, \rho}(X.H)\vert T \textit{                    is locally compact               }\}.
		\end{equation}
	\end{defn}
	\begin{defn}
		Let $X$ be a coarse space, $(H, U, \rho)$ a $\Gamma$-equivariant $X$-module. Then we will denote by $D^{\star}_{c,\rho}(X,H)^{\Gamma}$ and $C^{\star}_{c, \rho}(X,H)^{\Gamma}$ the operators of $D^{\star}_{c,\rho}(X,H)$ and $C^{\star}_{c, \rho}(X,H)$ which commute with the action of $\Gamma$ on $H$.
	\end{defn}
	\begin{defn}
		The $C^*$-algebras $D^{\star}_\rho(X,H)$, $C^{\star}_\rho(X,H)$, $D^{\star}_\rho(X,H)^{\Gamma}$ and $C^{\star}_\rho(X,H)^{\Gamma}$ are the closure in $B(H)$, of $D^{\star}_{c,\rho}(X,H)$, $C^{\star}_{c,\rho}(X,H)$, $D^{\star}_{c,\rho}(X,H)^{\Gamma}$ e $C^{\star}_{c,\rho}(X,H)^{\Gamma}$. The algebra $D^{\star}_{\rho}(X,H)^{\Gamma}$ will be called \textbf{structure algebra} and $C^{\star}_{\rho}(X,H)^{\Gamma}$ will be called \textbf{coarse algebra} or \textbf{Roe algebra}.
	\end{defn}
	We have that $C^*_{\rho}(X,H)^{\Gamma}$ is an ideal of $D^*_{\rho}(X,H)^{\Gamma}$ and so the following sequence
	\begin{equation}
	{ 0 \longrightarrow C^{\star}_{\rho}(X,H)^\Gamma \longrightarrow D^{\star}_{\rho}(X,H)^{\Gamma} \longrightarrow \frac{D^{\star}_{\rho}(X,H)^{\Gamma}}{C^{\star}_{\rho}(X,H)^\Gamma} \longrightarrow 0 }
	\end{equation}
	is exact.
	\begin{rem}\label{boy}
	Let us consider a Riemannian manifold $(X,g)$ and let $\Gamma$ be a subgroup of isometries of $X$. In particular, if $X$ is connected, consider on $(X,g)$ the coarse metric structure. If $X = M \sqcup N$ and there is a uniform homotopy equivalence $f:M \longrightarrow N$ consider the coarse structure defined in Example \ref{coarseexem}. Let us consider the $(X, \Gamma)$-modules defined in Example \ref{perna} and \ref{serna}. Observe that $B(\mathcal{L}^2(X))$ can be embedded in $B(H_X)$ in the following way: for each $A$ in $B(\mathcal{L}^2(X))$, we define $\tilde{A} \in B(H_X)$ as follow
	\begin{equation}
	\tilde{A}(\{\alpha_i\}) := \{\beta_j\}
	\end{equation}
where $\beta_j = 0$ if $j\neq 0$ and $\beta_0 := A \alpha_0$.
\\We have that if $A$ is in $C^*_{\rho}(X, \mathcal{L}^2(X))^\Gamma$ then $\tilde{A}$ is in $C^*_{\rho_X}(X, H_X)^\Gamma$. Moreover if $A$ is in $D^*_{\rho}(X, \mathcal{L}^2(X))^\Gamma$ then $\tilde{A}$ is in $D^*_{\rho_X}(X, H_X)^\Gamma$. In the following sections, with a little abuse of notation, we will denote $\tilde{A}$ by $A$.
	\end{rem}
	\begin{rem}\label{girl}
	Let $(X,g)$ be a Riemannian manifold and $\Gamma$ be a subgroup of isometries of $X$. Let us suppose $dim(X)$ is even. Again if $X$ is connected we consider the metric coarse structure on $X$,  if $X = M \sqcup N$ and there is a uniform-homotopy equivalence $f:M \longrightarrow N$ we consider the coarse structure defined in Example \ref{coarseexem}. \\Fix on $X$ the $(X, \Gamma)$-modules defined in Example \ref{perna} and in Example \ref{terna}. 
	\\Let $A$ be an operator in $B(\mathcal{L}^2(X))$ such that $A(\mathcal{L}^2(V_+)) \subseteq \mathcal{L}^2(V_-)$. Let us define the operator $\tilde{A}$ as the operator
	\begin{equation}
	\tilde{A}(\{\alpha_i\}) := \{\beta_{j}\}
\end{equation}
where $\beta_j := \alpha_{j+1}$ if $j \neq -1$ and $\beta_{-1} := A(\alpha_0)$. Then if $A$ is in $D^*_{\rho}(X, \mathcal{L}^2(X))^\Gamma$ then $\tilde{A}$ is in $D^*_{\rho_X}(X, H_X)^\Gamma$. Moreover we also have that if $A$ and $B$ are operators in $D^*_{\rho}(X, \mathcal{L}^2(X))^\Gamma$ such that $A - B$ are in $C^*_{\rho}(X, \mathcal{L}^2(X))^\Gamma$, then we have that also $\tilde{A} - \tilde{B}$, which is the operator
\begin{equation}
[\tilde{A} - \tilde{B}](\{\alpha_i\}) := \{\beta_j\}
\end{equation}
where $\beta_j = 0$ if $j \neq -1$ and $\beta_{-1} = (A-B)\alpha_0$, is in $C^*_{\rho_X}(X, H_X)^\Gamma$. 
\\Moreover we also have that
\begin{equation}
\vert\vert\tilde{A} - \tilde{B}\vert\vert = \vert\vert A - B\vert\vert.
\end{equation}
In the following sections, with a little abuse of notation, we will denote $\tilde{A}$ with $A$.
\end{rem}	
	\begin{notation}
	Given a Riemannian manifold $(X,g)$, if we write $C^*(X)^\Gamma$ or $D^*(X)^\Gamma$ without specify $H$ and $\rho$, we are considering $H$, $\rho$ and $\Gamma$ as in Example \ref{perna}.
	\\
	\\Moreover, if $X = M \sqcup N$, as in Example \ref{coarseexem}, then the coarse structure depends on $f$. Thus, for this reason, we will denote its algebras by $C^*_f(M \sqcup N)^\Gamma$ and by $D_f(M \sqcup N)^\Gamma$.
	\end{notation}
	\subsection{Coarse maps}
	The following definitions and properties can be found in \cite{Anal} in Chapter 6.  
	\begin{defn}
		Let $X_1$ and $X_2$ be coarse spaces. A function $q:X_1 \longrightarrow X_2$ is called a \textbf{coarse map} if
		\begin{itemize}
			\item whenever $p$ and $p'$ are close maps into $X_1$, then so are the composition $q \circ p$ and $q \circ p'$,
			\item for every bounded (in the coarse sense of Definition 4.2) subset $B \subseteq X_2$ we have that $q^{-1}(B)$ is a bounded subset.
		\end{itemize}
	\end{defn}
	\begin{rem}
		If $(X_i, g_i)$ are manifolds of bounded geometry with the metric coarse structure, then a uniform map $f: (X_1, g_1) \longrightarrow (X_2, g_2)$ is a coarse maps.
		In order to prove this fact it is sufficient to prove that for each $R > 0$ and $p \in M$ there exist a number $S > 0$ and a $q \in N$ such that
		\begin{equation}\label{palle}
		f(B_{R}(p)) \subseteq B_{S}(q).
		\end{equation}
		Consider a $R$-ball on $X_1$ and fix $\epsilon$ and $\delta$ two $X_i$-small numbers such that
		\begin{equation}
		 d(x,y) < \delta \implies d(f(x),f(y)) \leq \epsilon.
		\end{equation}
		Observe that $B_{R}(p)$ can be covered by a finite number $K$ of balls of radius $\delta$. We can prove this fact adapting the proof of Proposition 2.16 of \cite{bound} and observing that, since the Bishop-Gromov inequality, there is a global bound on the Volume of a $(R + \delta)$-ball on a manifold of bounded geometry.
		\\Then we have that $f(B_{R}(p))$ is contained in $\bigcup\limits_{i= 0...K}B_{\epsilon}(q_i)$ and since it is connected we obtain that
		\begin{equation}
		diam(f(B_{R}(p)) \leq K \cdot 2 \epsilon =: S
		\end{equation}
		and so this means that there is a $q$ in $X_2$ such that (\ref{palle}) is satisfied. Moreover also the inclusion maps  $j_i: X_i \longrightarrow X_1 \sqcup X_2$ are coarse maps.
	\end{rem}
	Our next step is to introduce a morphism between the structure algebras of coarse spaces related to a coarse map. 
	\begin{defn}
		Let $X$ and $Y$ be proper separable coarse spaces and suppose that $C_0(X)$ and $C_0(Y)$ are non-degeneratly represented on Hilbert spaces $H_X$ and $H_Y$. Consider a coarse map $q: X \longrightarrow Y$. A bounded operator $V: H_X \longrightarrow H_Y$ \textbf{coarsely covers $q$} if the maps $\pi_1$ and $q \circ \pi_2$ from $supp(V) \subseteq X \times Y$ to $Y$ are close.
	\end{defn}
\begin{rem}\label{check}
As showed in Remark 6.3.10 of \cite{Anal}, if $f_0$ and $f_1$ are close maps and $V$ coarsely covers $f_0$, then it also coarsely covers $f_1$.
\end{rem}
\begin{defn}
	Let $X$ and $Y$ be coarse spaces, let $\rho_X : C_0(X) \longrightarrow B(H_X )$ and $\rho_Y : C_0(Y ) \longrightarrow B(H_Y )$ be ample representations on separable Hilbert spaces, and let $\phi: U \longrightarrow Y$ be a continuous proper map defined on an open subset $U \subseteq X$. An isometry $V : H_X \longrightarrow H_Y$ \textbf{topologically covers $\phi$} if for every $f \in C_0(Y)$ there is a compact operator $K$ such that in $B(H_Y )$
	\begin{equation}
		\rho_Y(f) = V\rho_X(\phi^*(f))V^* + K.
	\end{equation}
	An isometry $V$ which analytically and topologically covers a map $\phi$ then it \textbf{uniformly covers $\phi$}.
\end{defn}
Let us recall the Proposition 2.13 of \cite{siegel}
\begin{prop}
	Let $H_X$ be an ample $(X, \Gamma)$-module and let $H_Y$ be a very ample $(Y, \Gamma)$-module. Then every equivariant uniform map $\phi: X \longrightarrow Y$ is uniformly covered by an equivariant isometry $V : H_X \longrightarrow H_Y$.
\end{prop}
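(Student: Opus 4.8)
The plan is to build $V$ by a patching construction, in the spirit of the classical existence results for covering isometries (Higson--Roe, and Siegel in the equivariant case). First I would pass to a uniformly bounded, locally finite Borel partition of $X$. Since the $\Gamma$-action on $X$ is FUPD the quotient $X/\Gamma$ is a manifold of bounded geometry, so it carries a uniformly bounded, locally finite open cover; choosing a Borel fundamental domain $F\subseteq X$ and refining, one obtains a family $\{B_{\gamma,j}\}_{\gamma\in\Gamma,\,j\in\mathbb{N}}$ of pairwise disjoint Borel sets of uniformly bounded diameter with $\bigcup_{\gamma,j}B_{\gamma,j}=X$ and $\gamma'\cdot B_{\gamma,j}=B_{\gamma'\gamma,j}$. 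Extending $\rho_X$ to Borel functions (possible because the representation is ample), the characteristic functions $\chi_{B_{\gamma,j}}$ give an orthogonal decomposition $H_X=\bigoplus_{\gamma,j}\chi_{B_{\gamma,j}}H_X$ permuted by $U_X$. I would do the same on $Y$, producing a $\Gamma$-permuted decomposition $H_Y=\bigoplus_{\gamma,k}\chi_{C_{\gamma,k}}H_Y$, and, using very ampleness of $H_Y$ together with the equivariant splitting $H_Y=\bigoplus_m H_Y^{(m)}$ into ample $\Gamma$-submodules, I would arrange that each $\chi_{C_{\gamma,k}}H_Y$ still contains infinitely many separable infinite-dimensional summands compatible with the $H_Y^{(m)}$.

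Second, I would assemble the local isometries. As $\phi$ is a coarse (uniform) map, $\phi(B_{e,j})$ is bounded for every $j$, hence contained in a finite union of the $C_{e,k}$; and since $\phi$ is uniformly proper, for each fixed $k$ only finitely many $j$ have $\phi(B_{e,j})$ meeting $C_{e,k}$. So I can choose, for each $j$, an isometry $V_j:\chi_{B_{e,j}}H_X\longrightarrow\chi_{C_{e,k(j)}}H_Y$ landing in one of the infinite-dimensional summands singled out above, with the $V_j$ having pairwise orthogonal ranges and with $C_{e,k(j)}$ containing a fixed bounded neighbourhood of $\phi(B_{e,j})$; here I only use that any two separable infinite-dimensional Hilbert spaces are isometric and that $H_X$ is separable. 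I then set $V:=U_Y(\gamma)\,V_j\,U_X(\gamma)^{-1}$ on $\chi_{B_{\gamma,j}}H_X$ and take the Hilbert-space direct sum over $(\gamma,j)$. This $V$ is an isometry (orthogonal ranges), and by construction $U_Y(\gamma)V=VU_X(\gamma)$, so $V$ is $\Gamma$-equivariant.

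Third, I would verify the two covering conditions. For the support (coarse) covering condition: $\mathrm{supp}(V)\subseteq\bigcup_{\gamma,j}C_{\gamma,k(j)}\times B_{\gamma,j}\subseteq Y\times X$, and for a point $(y,x)$ of this support we have $x\in B_{\gamma,j}$, $y\in C_{\gamma,k(j)}$, hence $d_Y(y,\phi(x))\le\mathrm{diam}(C_{\gamma,k(j)})$, a bound uniform in $(\gamma,j)$ because the partition of $Y$ is uniformly bounded; thus $\pi_1$ and $\phi\circ\pi_2$ are close on $\mathrm{supp}(V)$. For the topological covering condition: given $f\in C_0(Y)$, consider $\rho_Y(f)-V\rho_X(\phi^{\ast}f)V^{\ast}$; piece by piece the blocks $\chi_{C_{\gamma,k}}\rho_Y(f)\chi_{C_{\gamma',k'}}$ and $V\chi_{B_{\gamma,j}}\rho_X(\phi^{\ast}f)\chi_{B_{\gamma',j'}}V^{\ast}$ differ only by the failure of the $V_j$ to intertwine multiplication operators exactly; each such difference is supported on bounded sets and is locally compact, and summing over the finitely many relevant indices attached to each $C_{\gamma,k}$ while using that $f$ vanishes at infinity gives a norm-convergent sum of compact operators, i.e.\ a compact $K$ with $\rho_Y(f)=V\rho_X(\phi^{\ast}f)V^{\ast}+K$. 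Hence $V$ topologically covers $\phi$, and therefore uniformly covers $\phi$.

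The step I expect to be the main obstacle is the first one: producing the two Borel partitions so that they are simultaneously uniformly bounded, locally finite, $\Gamma$-equivariant and \emph{compatible with $\phi$} (each $\phi(B_{e,j})$ meeting only finitely many $C_{e,k}$ and each $C_{e,k}$ met by only finitely many $\phi(B_{e,j})$), while keeping enough room inside each $\chi_{C_{e,k}}H_Y$ to absorb the relevant pieces of $H_X$. This is precisely where bounded geometry (uniform open covers, Bishop--Gromov volume bounds) and the FUPD hypothesis (quotients of bounded geometry, a well-behaved Borel fundamental domain) enter; once the partitions are in place, the assembly of $V$ and the two verifications above are routine.
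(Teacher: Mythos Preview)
The paper does not prove this statement; it is quoted as Proposition~2.13 of Siegel's thesis with no argument supplied. Your patching construction is the standard route to an equivariant isometry that \emph{coarsely} covers $\phi$, and your verification of the support condition is essentially correct.

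The genuine gap is in the topological covering. Your $V_j$ are \emph{arbitrary} isometries between separable Hilbert spaces; they carry no information about the $C_0(X)$- and $C_0(Y)$-module structures. Consequently the local pieces $\rho_Y(f)V_j - V_j\rho_X(\phi^*f)\chi_{B_{e,j}}$ have no reason whatsoever to be compact: take $X=Y$, $\phi=\mathrm{id}$, $B_{e,0}=C_{e,0}=[0,1]$, and let $V_0$ be the unitary on $L^2[0,1]$ induced by $x\mapsto x+\tfrac12 \bmod 1$; then $\rho(f)V_0-V_0\rho(f)$ equals $V_0$ composed with multiplication by $f(x+\tfrac12)-f(x)$, which is not compact. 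Your phrase ``each such difference is supported on bounded sets and is locally compact'' does not yield compactness --- the identity on $L^2$ of a ball is locally compact and boundedly supported but not compact. There is a second obstruction if one reads the paper's definition literally: since only finitely many $V_j$ land in each cell $C_{e,k}$ while $\chi_{C_{e,k}}H_Y$ contains infinitely many ample summands, the projection $1-VV^*$ dominates entire ample submodules, and $(1-VV^*)\rho_Y(f)(1-VV^*)$, which is a direct summand of $\rho_Y(f)-V\rho_X(\phi^*f)V^*$, is never compact for $f\neq 0$. What is actually needed, and what Siegel's argument supplies, is an equivariant Voiculescu-type absorption step: very ampleness of $H_Y$ is used not merely to create orthogonal room for the pieces, but to absorb the pushed-forward module along $\phi$ into $H_Y$ up to compacts, producing an isometry with $\rho_Y(f)V-V\rho_X(\phi^*f)$ compact for every $f$. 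That absorption ingredient is entirely missing from your sketch, and no amount of combinatorics on the partitions will replace it.
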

As consequence of this fact we can prove the following Lemma.
\begin{lem}
	Let $X$ and $Y$ be proper separable coarse spaces and fix $\Gamma$ a group. Let us consider a $\Gamma$-equivariant continuous coarse map $\phi: X \longrightarrow Y$ and consider an equivariant isometry $V : H_X \longrightarrow H_Y$ which uniformly covers $\phi$. Then the map
	\begin{equation}
		\begin{split}
			Ad_V: D_\rho^*(X, H_X)^\Gamma &\longrightarrow D_{\rho}^*(Y, H_Y)^\Gamma \\
			T &\longrightarrow VTV^*
		\end{split}
	\end{equation}
	is well defined, maps $C^*_\rho(X, H_X)^\Gamma$ in $C^*_\rho(Y, H_Y)^\Gamma$. Moreover the induced morphisms between the K-theory groups don't depend on the choice of $V$.
\end{lem}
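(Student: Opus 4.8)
The plan is to establish the three assertions in turn, reducing everything to support estimates, the two halves of the covering condition, and a standard inner--automorphism argument in K-theory.

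\emph{Well-definedness.} I would first check that $VTV^{*}$ lies in $D^{\star}_{c,\rho}(Y,H_Y)^{\Gamma}$ whenever $T$ lies in $D^{\star}_{c,\rho}(X,H_X)^{\Gamma}$, and then extend $Ad_V$ by continuity. Equivariance of $VTV^{*}$ is immediate from $VU_X(\gamma)=U_Y(\gamma)V$ and its adjoint $V^{*}U_Y(\gamma)=U_X(\gamma)V^{*}$. For finite propagation I would use that $V$ coarsely covers $\phi$ and that $\phi$ is a coarse map: from the inclusion of supports $\mathrm{supp}(VTV^{*})\subseteq\mathrm{supp}(V)\circ\mathrm{supp}(T)\circ\mathrm{supp}(V^{*})$, a point of the right-hand side is controlled by concatenating ``$y$ close to $\phi(x)$'', ``$\phi(x)$ close to $\phi(x')$'' (here $\phi$ being coarse and $T$ controlled), and ``$\phi(x')$ close to $y'$'', so $VTV^{*}$ is controlled. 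Pseudo-locality is where topological covering enters: for $g\in C_0(Y)$ write $\rho_Y(g)=V\rho_X(\phi^{*}g)V^{*}+K$ with $K$ compact, so that, using $V^{*}V=\mathrm{id}$,
\[
[\rho_Y(g),VTV^{*}]=V[\rho_X(\phi^{*}g),T]V^{*}+KVTV^{*}-VTV^{*}K,
\]
which is compact because $T$ is pseudo-local. Since $V$ is an isometry, $Ad_V$ is a norm-decreasing $*$-homomorphism on the dense $*$-subalgebra and hence extends to a $*$-homomorphism $D^{\star}_{\rho}(X,H_X)^{\Gamma}\to D^{\star}_{\rho}(Y,H_Y)^{\Gamma}$. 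For the ideal, the same substitution gives $VTV^{*}\rho_Y(g)=VT\rho_X(\phi^{*}g)V^{*}+VTV^{*}K$ and $\rho_Y(g)VTV^{*}=V\rho_X(\phi^{*}g)TV^{*}+KVTV^{*}$, both compact when $T$ is locally compact; so $Ad_V$ carries $C^{\star}_{\rho}(X,H_X)^{\Gamma}$ into $C^{\star}_{\rho}(Y,H_Y)^{\Gamma}$.

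\emph{Independence of the K-theory map.} Let $V_0,V_1$ be two equivariant isometries uniformly covering $\phi$ and set $p_i:=V_iV_i^{*}$ and $W:=V_1V_0^{*}$. Applying the support and topological-covering estimates above to the pairs $(V_i,V_i)$ and $(V_1,V_0)$ shows $p_0,p_1,W\in D^{\star}_{\rho}(Y,H_Y)^{\Gamma}$. As this algebra is unital (the identity operator is controlled and pseudo-local), the matrix
\[
\Omega:=\begin{pmatrix}1-p_1 & W\\ W^{*} & 1-p_0\end{pmatrix}\in M_2\big(D^{\star}_{\rho}(Y,H_Y)^{\Gamma}\big)
\]
makes sense, and the relations $W^{*}W=p_0$, $WW^{*}=p_1$, $Wp_0=W=p_1W$ give $\Omega^{*}=\Omega$ and $\Omega^{2}=\mathrm{id}$, so $\Omega$ is a unitary; furthermore $\Omega(0\oplus V_0)=V_1\oplus 0$ as isometries $H_X\to H_Y\oplus H_Y$. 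Consequently, if $\jmath_1,\jmath_2$ denote the two corner embeddings $D^{\star}_{\rho}(Y,H_Y)^{\Gamma}\to M_2\big(D^{\star}_{\rho}(Y,H_Y)^{\Gamma}\big)$, we obtain the identity of $*$-homomorphisms $\jmath_1\circ Ad_{V_1}=Ad_{\Omega}\circ\jmath_2\circ Ad_{V_0}$. Since $\mathrm{diag}(\Omega,\Omega^{*})$ is homotopic to the identity through unitaries of $M_4\big(D^{\star}_{\rho}(Y,H_Y)^{\Gamma}\big)$ (the usual rotation trick: for a unitary $u$ in a unital $C^{*}$-algebra, $\mathrm{diag}(u,u^{*})$ is connected to $1$ in the unitary group), $(Ad_{\Omega})_{*}$ is the identity on the K-theory of $M_2\big(D^{\star}_{\rho}(Y,H_Y)^{\Gamma}\big)$ and of its ideal $M_2\big(C^{\star}_{\rho}(Y,H_Y)^{\Gamma}\big)$; as $(\jmath_1)_{*}=(\jmath_2)_{*}$ is the stabilization isomorphism, cancelling it gives $(Ad_{V_1})_{*}=(Ad_{V_0})_{*}$ on $K_{*}(D^{\star}_{\rho}(Y,H_Y)^{\Gamma})$ and on $K_{*}(C^{\star}_{\rho}(Y,H_Y)^{\Gamma})$. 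This is the equivariant version of the argument in \cite{Anal}, Chapter 6, and \cite{siegel}.

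The first two assertions are routine manipulations with supports and compact operators. I expect the only delicate point to be the bookkeeping in the last step: checking that $W$, $p_0$ and $p_1$ are themselves controlled and pseudo-local --- which is exactly where both ``analytic'' and ``topological'' covering are needed --- and being careful that the inner-automorphism argument is carried out with $D^{\star}_{\rho}(Y,H_Y)^{\Gamma}$ playing the role of the unital over-algebra of the ideal $C^{\star}_{\rho}(Y,H_Y)^{\Gamma}$.
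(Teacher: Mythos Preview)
Your argument is correct and, for the first two assertions (pseudo-locality, local compactness, equivariance, finite propagation), it coincides with the paper's proof: both reduce pseudo-locality to the identity $[\rho_Y(g),VTV^{*}]=V[\rho_X(\phi^{*}g),T]V^{*}+[K,VTV^{*}]$, and both cite or reproduce the support estimate from \cite{Anal}, Lemma~6.3.11, for the controlled/locally compact parts.

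For the K-theory independence, you and the paper follow the same underlying strategy---relate $Ad_{V_0}$ and $Ad_{V_1}$ by conjugation with a $2\times 2$ matrix built from $V_1V_0^{*}$ and then invoke the rotation trick (the paper cites \cite{Anal}, Lemmas~5.4.2 and~4.1.10)---but the implementations differ slightly. The paper uses the off-diagonal matrix $C=\begin{pmatrix}0 & V_1V_2^{*}\\ V_2V_1^{*} & 0\end{pmatrix}$, while you use $\Omega=\begin{pmatrix}1-p_1 & W\\ W^{*} & 1-p_0\end{pmatrix}$ with the complementary projections on the diagonal. Your construction has the advantage that $\Omega$ is a genuine unitary in $M_2\big(D^{*}_{\rho}(Y,H_Y)^{\Gamma}\big)$ regardless of whether the $V_i$ are surjective, so the rotation homotopy applies directly; the paper's $C$ satisfies $CC^{*}=\mathrm{diag}(p_1,p_2)$ and hence is only a partial isometry when the $V_i$ are not onto, which makes the subsequent appeal to Lemma~4.1.10 slightly less immediate. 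Your explicit remark that one must check $W,p_0,p_1\in D^{*}_{\rho}(Y,H_Y)^{\Gamma}$ (using both the coarse and the topological covering conditions) and that the inner-automorphism argument is run with $D^{*}$ as the unital over-algebra of the ideal $C^{*}$ is exactly the care the argument requires.
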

	\begin{proof}
		Let us start by proving that $Ad_V(T)$ is in $D^*_\rho(Y, H_Y)^\Gamma$. In Lemma 6.3.11 of \cite{Anal}, the authors prove that if $T$ is a controlled operator, then $Ad_V(T)$ is controlled. Moreover, since $T$ and $V$ are both $\Gamma$-equivariant, also $Ad_V(T)$ is $\Gamma$-equivariant. Then we just have to prove that, given $f$ in $C_0(Y)$, then
		\begin{equation}
		[\rho_Y(f), VTV^*]
		\end{equation}
		is a compact operator. Observe that since $V$ topologically covers $\phi$, then
		\begin{equation}
		\rho_Y(f) = V\rho_X(\phi^*(f))V^* + K,
		\end{equation}
		where $K$ is a compact operator. This means that
		\begin{equation}
		\begin{split}
		[\rho_Y(f), VTV^*] &= [V\rho_X(\phi^*(f))V^*, VTV^*] + [K, VTV^*]\\ 
		&= V[\phi^*(f), T]V^* + [K, VTV^*],
		\end{split}
		\end{equation}
		which is a compact operator.
		\\
		\\That $Ad_V$ maps $C^*_\rho(X, H_X)^\Gamma$ in $C^*_\rho(Y, H_Y)^\Gamma$ is proved in Lemma 6.3.11 of \cite{Anal}. The $\Gamma$-equivariance, again, follows by the $\Gamma$-equivariance of $V$.
		\\Since Proposition 6.3.12 of \cite{Anal}, we have that the morphisms induced between $K_\star(C^*_\rho(X, H_X)^\Gamma)$ and $K_\star(C^*_\rho(Y, H_Y)^\Gamma)$ don't depend on the choice of $V$. Applying the same arguments used in the proof of Lemma 5.4.2. of \cite{Anal}, we obtain that the same holds for the morphisms between $K_\star(D^*_\rho(X, H_X)^\Gamma)$ and $K_\star(D^*_\rho(Y, H_Y)^\Gamma)$. Let us consider, indeed, a projection (or a unitary) $T$ in $D^*_\rho(X, H_X)^\Gamma$ and consider two isometries $V_1$ and $V_2$ that uniformly cover $f$.
		Consider the matrices 
		\begin{equation}
			\mathcal{V}_1 := \begin{bmatrix}
				V_1 TV_1^* && 0 \\
				0 && 0
			\end{bmatrix}
		\mbox{        and      } \mathcal{V}_2 :=\begin{bmatrix}
			0 && 0 \\
			0 && V_2 TV_2^*
		\end{bmatrix}
		\end{equation}
	if $T$ is a projection and
	\begin{equation}
		\mathcal{V}_1 := \begin{bmatrix}
			V_1 TV_1^* && 0 \\
			0 && 1
		\end{bmatrix}
		\mbox{        and      } \mathcal{V}_2 :=\begin{bmatrix}
			1 && 0 \\
			0 && V_2 TV_2^*
		\end{bmatrix}
	\end{equation}
if $T$ is a unitary.
Observe that if we define the matrix
\begin{equation}
	C := \begin{bmatrix}
	0 && V_1V_2^* \\
	V_2V_1^* && 0
	\end{bmatrix}
= \begin{bmatrix}
	0 && 1 \\
	1 && 0
\end{bmatrix}
\cdot \begin{bmatrix}
	V_1V_2^* && 0 \\
	0 && V_2V_1^*
\end{bmatrix}
\end{equation}
then, we have that
\begin{equation}
\mathcal{V}_1 = C \mathcal{V}_2 C^*
\end{equation}
and each entry of $C$ is an isometry of $D^*_\rho(Y, H_Y)^\Gamma$. As consequence of Lemma 4.1.10 of \cite{Anal}, we have that there is a continuous curve of unitary elements connecting $C$ and the matrix
 \begin{equation}
 	\begin{bmatrix}
 		0 && 1 \\
 		1 && 0
 	\end{bmatrix}.
 \end{equation}
Let us define the matrix $\mathcal{W}_2 $ as
\begin{equation}
\mathcal{W}_2 :=\begin{bmatrix}
	V_2 TV_2^* && 0 \\
	0 && 0
\end{bmatrix}
\end{equation}
if $T$ is a projection and
\begin{equation}
	\mathcal{W}_2 :=\begin{bmatrix}
		V_2 TV_2^* && 0 \\
		0 && 1
	\end{bmatrix}
\end{equation}
if $T$ is a unitary. As consequence of Lemma 4.1.10 there is a continuous curve of projection (or unitary) connecting $\mathcal{V}_1$ and $\mathcal{W}_2$.
Then we conclude observing that
\begin{equation}
[A_{V_1}T] = [\mathcal{V}_1] = [\mathcal{W}_2] = [A_{V_2}T].
\end{equation}
	\end{proof}
	\begin{defn}
		Consider two coarse spaces $X$ and $Y$ and let $(H_X, U_X, \rho_X)$ and $(H_Y, U_Y, \rho_Y)$ be a $(X, \Gamma)$-module and a $(Y, \Gamma)$-module. Suppose that $(H_X, U_X, \rho_X)$ and $(H_Y, U_Y, \rho_Y)$ are very ample. Then if $f: X \longrightarrow Y$ is a $\Gamma$-equivariant continuous coarse map, then
		\begin{equation}
		f_\star: K_n(C^*(X)^\Gamma) \longrightarrow K_n(C^*(Y)^\Gamma)
		\end{equation}
		is defined as the morphism induced by $Ad_V$ in K-Theory. We will use the same notation to denote the maps induced by $Ad_V$ between the K-theory groups of $D^*(\cdot)^\Gamma$ and $\frac{D^*(\cdot)^\Gamma}{C^*(\cdot)^\Gamma}$.
	\end{defn}
\begin{rem}
Because of Examples \ref{serna} and \ref{terna}, we know that if $X$ is a coarse metric space or if $X = M \sqcup N$ and it has the coarse structure defined in Example \ref{coarseexem}, then it admits a very ample $(X, \Gamma)$-module. Then we have that if $f$ is a coarse map between coarse spaces endowed with one of these two coarse structures, then there is a well-defined map $f_\star$ in K-theory.
\end{rem}
\begin{rem}
	If we have two continuous coarse maps $f: X \longrightarrow Y$ and $g:Y \longrightarrow Z$, an isometry $V: H_X \longrightarrow H_Y$ which uniformly covers $f$ and an isometry $W: H_Y \longrightarrow H_Z$ then $W \circ V$ uniformly covers $g \circ f$. This fact implies that
	\begin{equation}
		f \longrightarrow f_\star
	\end{equation}
	respect the functorial properties.
\end{rem}
		Consider a coarse map $f:X \longrightarrow Y$ between coarse spaces. As proved in Lemma 6.3.11 of \cite{Anal}, to induce a map between the coarse algebras it is sufficient that the isometry $V$ coarsely covers $f$. Moreover, as we said in Remark \ref{check}, if $f$ and $g: X \longrightarrow Y$ are close maps, then $V$ coarsely covers $f$ if and only if it coarsely covers $g$. 
		\\We already know, because of Example \ref{bru}, that if $f$ and $g:(M,g) \longrightarrow (N,h)$ are two uniform homotopic maps between connected Riemannian manifolds, then they are close. This means that we have
		\begin{equation}\label{ops}
		f \sim_\Gamma g \implies f_\star = g_\star.
		\end{equation}
	where $f_\star, g_{\star}: K_n(C^*(X)^\Gamma) \longrightarrow K_n(C^*(Y)^\Gamma)$.
	\\Then the following Proposition holds.
	\begin{prop}
	Consider $(M,g)$ and $(N,h)$ two Riemannian manifolds. If they are uniform-homotopy equivalent, then
	\begin{equation}
K_n(C^*(M)^\Gamma) \cong K_n(C^*(N)^\Gamma). 
	\end{equation}
	\end{prop}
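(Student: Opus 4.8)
\emph{Proof sketch / plan.} The plan is to deduce the isomorphism formally from the functoriality of the assignment $h \mapsto h_\star$ together with the implication (\ref{ops}). First I would pass to the $\Gamma$-equivariant setting (the version without a group being the case of the trivial group): a $\Gamma$-equivariant uniform homotopy equivalence $f:(M,g)\longrightarrow(N,h)$ comes, by definition, with a $\Gamma$-equivariant homotopy inverse $g:(N,h)\longrightarrow(M,g)$ and $\Gamma$-uniform homotopies $g\circ f \sim_\Gamma \mathrm{id}_M$ and $f\circ g \sim_\Gamma \mathrm{id}_N$. Using Proposition 1.7 of \cite{Spes} I would replace $f$ and $g$ by smooth Lipschitz maps with derivatives uniformly bounded in normal coordinates; these are still $\Gamma$-equivariant and, by the Remark on coarse maps, are continuous coarse maps. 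Since $M$ and $N$ carry the metric coarse structure, the modules of Examples \ref{perna}--\ref{serna} are very ample, so the morphisms $f_\star:K_n(C^*(M)^\Gamma)\longrightarrow K_n(C^*(N)^\Gamma)$ and $g_\star:K_n(C^*(N)^\Gamma)\longrightarrow K_n(C^*(M)^\Gamma)$ are well defined.

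Next I would feed the compositions into the machinery already set up. The maps $g\circ f$ and $f\circ g$ are $\Gamma$-equivariant continuous coarse maps, and they are $\Gamma$-uniformly homotopic to $\mathrm{id}_M$ and $\mathrm{id}_N$ respectively. Since $h\mapsto h_\star$ respects composition, and using (\ref{ops}), this gives
\[
g_\star\circ f_\star \;=\; (g\circ f)_\star \;=\; (\mathrm{id}_M)_\star, \qquad f_\star\circ g_\star \;=\; (f\circ g)_\star \;=\; (\mathrm{id}_N)_\star .
\]
Finally $(\mathrm{id}_M)_\star$ is the map induced by $Ad_V$ for $V$ the identity isometry of $H_M$, hence it is $\mathrm{id}_{K_n(C^*(M)^\Gamma)}$, and likewise for $N$. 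Therefore $f_\star$ is an isomorphism with inverse $g_\star$, which is the claim.

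I expect the only point requiring real care to be the bookkeeping around the approximations: one must perform the smoothing of \cite{Spes} on $f$ and on $g$ in such a way that the smoothed maps remain homotopy inverses of one another. This uses that smoothing replaces a map by a close one, that close maps between connected Riemannian manifolds induce the same morphism in $K$-theory (Example \ref{bru} together with (\ref{ops})), and that closeness is stable under pre- and post-composition with uniformly continuous maps. Once this is in place, everything else is a purely formal consequence of functoriality, of (\ref{ops}), and of the fact that the identity map is covered by the identity isometry.
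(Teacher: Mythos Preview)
Your proposal is correct and matches the paper's intended argument: the paper offers no explicit proof, writing only ``Then the following Proposition holds'' immediately after (\ref{ops}), so the functoriality-plus-(\ref{ops}) reasoning you spell out is exactly what is meant. The smoothing step you worry about is in fact unnecessary here---uniform maps between manifolds of bounded geometry are already continuous coarse maps by the Remark preceding the proposition, so $f_\star$ and $g_\star$ are defined without any approximation; the $C^2_b$ regularity from Proposition~1.7 of \cite{Spes} only becomes relevant later in the paper when controlling the operator $T_f$.
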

	\subsection{Roe Index of the signature operator}
		Consider $(M,g)$ a connected, oriented and complete Riemannian manifold.
	\begin{defn}
	Let us denote by $d$ the closed extension in $\mathcal{L}^2(M)$ of exterior derivative operator and by $\tau$ the chirality operator on $(M,g)$. The \textbf{signature operator} $D_M: dom(d) \cap dom(d^*) \subset \mathcal{L}^2(M) \longrightarrow \mathcal{L}^2(M)$ is the operator defined as
	\begin{equation}
	D_M := d + d^* = d - \tau d \tau.
	\end{equation}
if $dim(M)$ is even and as
\begin{equation}
D_M := \tau d + d \tau
\end{equation} 
if $dim(M)$ is odd. 
	\end{defn}
Fix a group $\Gamma$ of isometries on $(M,g)$. Our goal, in this subsection, is to define a class in $K_\star(C^*(M)^\Gamma)$ related to $D_M$. We will call this class the \textit{Roe index} of $D_M$.
\begin{defn}
Let $\chi: \mathbb{R} \longrightarrow \mathbb{R}$ be a smooth map. Then $\chi$ is a \textbf{chopping function} if it is odd, $\lim\limits_{x \to +\infty} \chi = 1$ and $\lim\limits_{x\to -\infty}\chi = -1$.
\end{defn}
Then, since $D_M$ is selfadjoint, the operator
\begin{equation}
\chi(D_M) \in B(\mathcal{L}^2(M))
\end{equation}
is well-defined. In particular, we have that $\chi(D_M)$ is in $D^*(M)^\Gamma$ (see \cite{Roeindex}).
\\Let us denote by $C_0(\mathbb{R})$ the vector space of continuous functions $h: \mathbb{R} \longrightarrow \mathbb{R}$ such that $\lim\limits_{t \rightarrow \pm \infty} h(t) = 0$. 
\\Because of Proposition 3.6 \cite{Roeindex}, if $h$ is a function in $C_0(\mathbb{R})$, then $h(D_M) \in C^*(M)^\Gamma.$
Then, if $\chi_1$ and $\chi_2$ are two chopping functions, we have that $\chi_1 - \chi_2(D_M) \in C^*(M)^\Gamma$.
This means that, given a chopping function $\chi$, the operator
\begin{equation}
\chi(D_M) \in \frac{D^*(M)^\Gamma}{C^*(M)^\Gamma}
\end{equation}
doesn't depend on the choice of $\chi$. Moreover, since $\chi^2 - 1 \in C_0(\mathbb{R}),$ we also have that $\chi(D_M)$ is an involution of $\frac{D^*(M)^\Gamma}{C^*(M)^\Gamma}$. 
\\Let us suppose that $dim(M)$ is odd and consider
\begin{equation}
\frac{1}{2}(\chi (D_{M})+1) \in \frac{D^{\star}(M)^{\Gamma}}{C^{\star}(M)^\Gamma}.
\end{equation}
This is a projection.
\\
\\Let us suppose $dim(M)$ is even. We have that $D_M$ anti-commute with the chirality operator $\tau$. Then, considering the orthogonal splitting $\mathcal{L}^2(M) = \mathcal{L}^2(V_{+1}) \oplus \mathcal{L}^2(V_{-1}),$ we have that $D_M$ can be written as
\begin{equation}
D_M = \begin{bmatrix} 0 && D_{M-} \\
	D_{M+} && 0
	\end{bmatrix}.
\end{equation}
We have that
\begin{equation}
	\chi(D_M) = \begin{bmatrix} 0 && \chi(D_{M})_- \\
		\chi(D_{M})_+ && 0
	\end{bmatrix}.
\end{equation}
Since $\chi(M)$ is a self-adjoint involution, then it is a unitary operator. The same holds also for $\chi(D_{M})_+$ and $\chi(D_{M})_-$.
\\Consider $(H_M, \rho_M, U_M)$ the $(M, \Gamma)$-module defined in Example \ref{terna}: we have that
\begin{equation}
H_M = ... \mathcal{L}^2(V_{-1}) \oplus \mathcal{L}^2(V_{-1}) \oplus \mathcal{L}^2(V_{+1}) \oplus \mathcal{L}^2(V_{-1}) \oplus \mathcal{L}^2(V_{-1}) ...
\end{equation}
and, because of Remark \ref{girl}, we can see $\chi(D_M)_+$ as a bounded operator on $H_M$ defined for each $\{\alpha_j\}$ in $H_M$ as $\beta_i = \alpha_{i+1}$ if $i \neq -1$ and as $\beta_{-1} = \chi(D_{M})_+ (\alpha_0)$. Then we can see $\chi(D_{M})_+$ as an unitary operator in
\begin{equation}
\frac{D^*_{\rho_M}(M, H_M)^\Gamma}{C^*_{\rho_M}(M, H_M)^\Gamma}.
\end{equation}
	\begin{defn}\label{index}
		The \textbf{fundamental class of $D_{M}$} is $[D_{M}] \in K_{n+1}(\frac{D^{\star}(M)^{\Gamma}}{C^{\star}(M)^\Gamma})$ given by
		\begin{equation}
		[D_{M}] := \begin{cases} 
		
		[\frac{1}{2}(\chi (D_{M})+1)] \text{if $n$ is odd,}
		\\ [\chi(D_{M})_+] \text{if $n$ is even.}
		
		\end{cases}
		\end{equation}
	\end{defn}
\begin{rem}
The definition of fundamental class in the even case is well-given: we are considering the $(M, \Gamma)$-module defined in Example \ref{terna} (remember that the K-theory groups of Roe algebra, structure algebra and their quotient don't depend on the Hilbert space or the representation).
\end{rem}
\begin{defn}
	The \textbf{Roe index} of $D_{M}$ is the class
	\begin{equation}
	{Ind_{Roe}(D_{M}) := \delta[D_{M}]}
	\end{equation}
	in $K_{n}(C^{*}(M)^{\Gamma})$, where $\delta$ the connecting homomorphism in the K-Theory sequence.
\end{defn}
\begin{rem}
The definition of Roe index in the even case is coherent with the definition 12.3.5. given in \cite{Anal}. Indeed as the authors show in the proof of Proposition 12.3.7., our fundamental class is the image of the Kasparov class $[D] \in K_p(M)$ under Paschke duality.
\end{rem}
	\section{Smoothing operators}\label{smoothing}
	\begin{defn}
		Consider two complete Riemannian manifolds $(M,g)$ and $(N,h)$. Let us denote by $pr_N$ the projection $pr_N: M\times N \longrightarrow N$ and by $pr_M$ the projection on the first component. Let us define the bundle on $M\times N$ given by
		\begin{equation}
		\Lambda^*(M) \boxtimes \Lambda(N) := pr_M^*(\Lambda^*(M)) \otimes pr_N^*(\Lambda(N)),
		\end{equation}
		where $\Lambda(N)$ is the dual bundle of $\Lambda^*(N)$.
	\end{defn}
	\begin{defn}
		An operator $A: dom(A) \subseteq \mathcal{L}^2(N) \longrightarrow \mathcal{L}^2(M)$ is a \textbf{smoothing operator} if there is a section $K$ of the fiber bundle $\Lambda^*(M) \boxtimes \Lambda(N)$ such that, for $\alpha \in dom(A) \cap \Omega^*(N)$ and for almost all $p \in M$ we have that
		\begin{equation}
		A(\alpha)(p) := \int_N K(p,q)\alpha(q) d\mu_{N}.
		\end{equation}
	\end{defn}
	\begin{rem}
		Given two coordinate charts $\{U, x^s\}$ on $M$ and $\{V, y^l\}$ on $N$, we have that a smooth section of $\Lambda^*(M) \boxtimes \Lambda(N)_{(p,q)}$ is locally given by
		\begin{equation}
		f(p,q)_S^L dx^S \otimes \frac{\partial}{\partial y^L},
		\end{equation}
		where $S = (s_1,... ,s_m)$ and $L = (l_1, ..., l_n)$ are multi-index, $dx^S = dx^{s_1}\wedge ... \wedge dx^{s_m}$, $\frac{\partial}{\partial y^L} = \frac{\partial}{\partial y^{l_1}}\wedge... \wedge \frac{\partial}{\partial y^{l_n}}$ and $f(p,q)_S^L$ is a function in $C^{\infty}(U \times V)$.
	\end{rem}
\begin{rem}\label{fuori}
	We know that for all $p$ in $M$ and for all $q$ in $N$ we have that $\Lambda_p^*(M)$ and $\Lambda_q(N)$ have a norm induced by their metrics (indeed the norm on $\Lambda_q(N)$ is equal to the dual norm of $\Lambda_q^*(N)$). We have that for all $k \in \Lambda^*(M)_p \boxtimes \Lambda(N)_q$ there are some $\beta_{i,p} \in \Lambda^*(M)$ and some $\gamma_{i,q} \in \Lambda(N)$ such that
	\begin{equation}
	k = \sum_i \beta_{i,p}  \otimes \gamma_{i,q}.
	\end{equation}
	Then, imposing for all $\beta_p \in \Lambda_p^*(M)$ and for all $\gamma_{q} \in \Lambda_q(N)$ that
	\begin{equation}
	\vert\beta_p \otimes \gamma_q\vert := \vert\beta_p\vert_{\Lambda_p^*(M)}\cdot\vert\gamma_q\vert_{\Lambda_q(N)},
	\end{equation}
	we can induce a norm on $\Lambda^*(M) \boxtimes \Lambda(N)_{(p,q)}$. Moreover, if the $\beta_{i,p}$ and the $\gamma_{i,q}$ are choosen such that for all $i \neq j$
	\begin{equation}
	\langle \beta_{i,p}, \beta_{j,p} \rangle_{\Lambda_p^*(M)} = \langle \gamma_{i,p}, \gamma_{j,p} \rangle_{\Lambda_q(N)} = 0,
	\end{equation}
	then we have that
	\begin{equation}
	\vert k\vert^2 = \sum_i \vert\beta_{i,p}\vert^2  \cdot \vert\gamma_{i,q}\vert^2.
	\end{equation}
Consider a smooth kernel of an integral operator $K$ and a differential form $\alpha$ in $\Omega^*(N)$. Then, for each point $(p,q)$ we have that
\begin{equation}
	K(p,q)\alpha(q) = \sum_i \gamma_{i,q}(\alpha(q)) \beta_{i,p}
\end{equation}
and so
\begin{equation}
	\begin{split}
		\vert K(p,q)\alpha(q)\vert^2 &= \sum_i \vert\gamma_{i,q}(\alpha(q))\vert^2 \vert\beta_{i,p}\vert^2\\
		&\leq \sum_i \vert\beta_{i,p}\vert^2\vert\gamma_{i,q}\vert^2)\alpha(q)\vert^2 \\
		&\leq (\sum_i \vert\beta_{i,p}\vert^2\vert\gamma_{i,q}\vert^2)\vert\alpha(q)\vert^2 \\
		&= \vert K(p,q)\vert^2 \vert\alpha(q)\vert^2.
	\end{split}
\end{equation}
\end{rem}
	\begin{defn}
		Let us consider two Riemannian manifolds $(N,h)$ and $(M,g)$ and let $A: dom(A) \mathcal{L}^2(N) \longrightarrow \mathcal{L}^2(M)$ be an integral operator with kernel $K$. We have that $A$ is an in integral operator with \textbf{compactly supported} kernel if the support of $K$ as section of $\Lambda^*(M) \boxtimes \Lambda(N)$ is compact.
	\end{defn}
	\begin{rem}\label{suppa}
	Smoothing operators with compactly supported kernels are compact operators.
	\end{rem}
	\begin{defn}
		Consider $N$ and $M$ two Riemannian manifolds. Let $A:\mathcal{L}^2(N) \longrightarrow \mathcal{L}^2(M)$ be a smoothing operator with kernel $K$. We say that $A$ has \textbf{uniformly bounded support} if there is $R \geq 0$ such that for all $q \in N$ and for all $p \in M$ we have that
			\begin{equation}
			diam(supp(K(\cdot, q))) \leq R 
			\end{equation}
			and
			\begin{equation}
			 diam(supp(K(p, \cdot))) \leq R.
			\end{equation}
	\end{defn}
	\begin{prop} \label{smoothbound}
		Let $(N,n)$ and $(M,m)$ be two Riemannian manifolds of bounded geometry. Let $A: dom(A) \mathcal{L}^2(N) \longrightarrow \mathcal{L}^2(M)$ be an integral operator with kernel $K$. If $A$ has uniformly bounded support with constant $Q$ and
		\begin{equation}
		\sup_{(p,q) \in M \times N} \vert K(p,q)\vert < L,
		\end{equation}
		then the operator $A$ is bounded.
	\end{prop}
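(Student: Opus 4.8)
The plan is to prove this by the classical Schur test, whose only non-standard ingredient here is a uniform bound on the volume of metric balls coming from bounded geometry. The first step is to reduce the desired estimate to a scalar kernel: by Remark \ref{fuori}, for $\alpha \in dom(A) \cap \Omega^*(N)$ and almost every $p \in M$ one has $\vert A\alpha(p)\vert \leq \int_N \vert K(p,q)\vert \, \vert\alpha(q)\vert \, d\mu_N$, so it suffices to bound on $\mathcal{L}^2$ the integral operator with non-negative kernel $k(p,q) := \vert K(p,q)\vert$.

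Second, I would show that both ``slice integrals'' of $k$ are uniformly bounded. Fix $p \in M$; if $K(p,\cdot)$ is not identically zero, pick a point $q_0$ in its support, so that $supp(K(p,\cdot)) \subseteq \overline{B_Q(q_0)}$ because of the uniform bound $Q$ on diameters. Since $N$ has bounded geometry its sectional curvature is bounded below, so the Bishop--Gromov inequality provides a finite constant $V_Q^N$, independent of the center, bounding the volume of any $Q$-ball in $N$; combined with $\sup\vert K\vert < L$ this gives $\int_N k(p,q)\, d\mu_N \leq L\, V_Q^N$ for every $p$. The symmetric argument applied to $M$ and to the slices $K(\cdot,q)$ yields $\int_M k(p,q)\, d\mu_M \leq L\, V_Q^M$ for every $q$, with $V_Q^M < +\infty$.

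Third comes the standard Cauchy--Schwarz computation: writing $k(p,q)\vert\alpha(q)\vert = k(p,q)^{1/2}\bigl(k(p,q)^{1/2}\vert\alpha(q)\vert\bigr)$, Cauchy--Schwarz in the $q$-variable and the first slice bound give $\bigl(\int_N k(p,q)\vert\alpha(q)\vert\, d\mu_N\bigr)^2 \leq L\,V_Q^N \int_N k(p,q)\vert\alpha(q)\vert^2\, d\mu_N$. Integrating in $p$, applying Tonelli's theorem (the integrand is non-negative) to switch the order of integration, and using the second slice bound, one obtains $\Vert A\alpha\Vert^2 \leq L^2\, V_Q^N V_Q^M \,\Vert\alpha\Vert^2$. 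Since smooth compactly supported forms (which lie in $dom(A)$) are dense in $\mathcal{L}^2(N)$, this proves that $A$ is bounded, with $\Vert A\Vert \leq L\sqrt{V_Q^N V_Q^M}$.

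The only point requiring genuine care is the uniform volume bound, which is exactly where bounded geometry is used, via Bishop--Gromov just as in the discussion of coarse maps earlier in the text. Everything else — the reduction to a scalar kernel, the Cauchy--Schwarz step, and the appeal to Tonelli — is routine, so I do not expect a real obstacle: once the volume bound is in place the argument is essentially bookkeeping.
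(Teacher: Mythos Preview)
Your proof is correct and follows essentially the same approach as the paper: reduce to the scalar kernel via Remark~\ref{fuori}, control the inner integral by Cauchy--Schwarz/Jensen using the uniformly bounded support, switch the order of integration, and invoke the Bishop--Gromov volume bound coming from bounded geometry. The only cosmetic difference is that the paper phrases the first step as a multivariate Jensen inequality on the normalized ball (yielding $\vert\int K\alpha\vert^2 \leq \mu_N(B_S)\int\vert K\alpha\vert^2$) rather than your Schur-test splitting $k = k^{1/2}\cdot k^{1/2}$, but the two are equivalent and the remaining steps coincide.
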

	\begin{proof}
		Consider a smooth form $\alpha$ in $\mathcal{L}^2(N)$. We have that
		\begin{equation}
			\vert\vert A(\alpha)\vert\vert^2 = \int_M \vert\int_N K(p,q)\alpha(q)d\mu_N\vert^2 d\mu_M.
		\end{equation}
\textbf{Step 1.} Fix $p$ in $M$. Since $A$ is right-uniformly bounded, then there is a $S\geq 0$ such that, for each fixed $p$ in $M$, there is a ball $B_S(q_p) \subset N$ such that
\begin{equation}
K(\tilde{p},q)\alpha(q) = 0
\end{equation}
if $\tilde{p}$ is not in $B_{S}(q_p)$.
\\For each fixed $p$ consider the measure $\mu_p$ on $B_S(q_p)$ defined for each $\mu_N$-measurable set as
\begin{equation}
	\mu_p(B) := \frac{\mu_N(B)}{\mu_N(B_S(q_p))}.
\end{equation}
Observe $B_S(q_p)$ is a probability space. Let us define the map
\begin{equation}
\begin{split}
	F_p: B_S(q_p) &\longrightarrow \Lambda^*_p(M) \\
	q &\longrightarrow \mu_N(B_S(q_p)) \cdot K(p,q)\alpha(q)
\end{split}
\end{equation}
Consider the map 
\begin{equation}
\begin{split}
\phi_p : \Lambda^*_p(M) \cong \mathbb{R}^{2n} &\longrightarrow \mathbb{R}\\
v_p &\longrightarrow \vert v_p \vert_p^2.
\end{split}
\end{equation}
Then we can apply the Multivariate Jensen inequality \cite{Jens} to $\phi$. Considering on $B_S(q_p)$ the measure $\mu_p$ we obtain
\begin{equation}\label{ciccone}
	\begin{split}
\phi_p(\int_{B_S(q_p)} F_p d\mu_p) \leq \int_{B_S(q_p)} \phi_p(F_p) d\mu_p
	\end{split}
\end{equation}
Observe that the left-hand part of (\ref{ciccone}) is
\begin{equation}
\begin{split}
\phi_p(\int_{B_S(q_p)} F_p d\mu_p) &= (\int_{B_S(q_p)} K(p,q)\alpha(q) \frac{\mu_N(B_S(q_p))}{\mu_N(B_S(q_p))} d\mu_N)^2\\
&= \vert \int_{N} K(p,q)\alpha(q) d\mu_N \vert_p^2
\end{split}
\end{equation}
The right-hand of (\ref{ciccone}) is
\begin{equation}
\begin{split}
\int_{B_S(q_p)} \phi_p(F_p) d\mu_p &= \int_{B_S(q_p)} \frac{\mu_N(B_S(q_p))^2}{\mu_N(B_S(q_p))} \vert K(p,q)\alpha(q)\vert^2 d\mu_N \\
&= \mu_N(B_S(q_p)) \int_{B_S(q_p)} \vert K(p,q)\alpha(q)\vert^2 d\mu_N\\
&= \mu_N(B_S(q_p)) \int_{N} \vert K(p,q)\alpha(q)\vert^2 d\mu_N
\end{split}
\end{equation}
Then we have that
\begin{equation}
\vert\int_N K(p,q)\alpha(q) d\mu_N\vert^2 \leq C\cdot \int_N \vert K(p,q)\alpha(q)\vert^2d\mu_N.
\end{equation}
And so we conclude the first step.
\\
\\
\\
\textbf{Step 2.}
Observe that 
		\begin{equation}
		\vert K(p,q)\alpha(q)\vert^2 \leq \vert K(p,q)\vert^2 \vert\alpha(q)\vert^2.
		\end{equation}
We proved in Remark \ref{fuori}. Then we have that
		\begin{equation}
		\begin{split}
		\vert\vert A(\alpha)\vert\vert^2 &= \int_M \vert\int_N K(p,q)\alpha(q)d\mu_N\vert^2 d\mu_M \\
		&\leq C \cdot \int_M \int_N \vert K(p,q)\alpha(q)\vert^2 d\mu_N d\mu_M\\
		&\leq C \cdot  \int_M \int_N \vert K(p,q)\vert^2\cdot \vert\alpha(q)\vert^2 d\mu_N d\mu_M\\
		&\leq C \cdot  \int_N \int_M \vert K(p,q)\vert^2\cdot \vert\alpha(q)\vert^2  d\mu_M d\mu_N\\
		&\leq C \cdot  \int_N (\int_M \vert K(p,q)\vert^2 d\mu_M \vert\alpha(q)\vert^2 d\mu_N.
		\end{split}
		\end{equation}
		Observe that, since $A$ has uniformly bounded support and since $M$ has bounded geometry, then exists a real number $L$
		\begin{equation}
		\mu_M(supp(K(\cdot, q))) \leq Vol(B_L(q)) \leq R
		\end{equation}
		Moreover if $\vert K(p,q)\vert^2 \leq L$, we have that for all $q$ in $N$
		\begin{equation}
		\int_M \vert K(p,q)\vert^2  d\mu_M \leq C_S L^2 \leq R
		\end{equation}
		Then we have that
		\begin{equation}
		\vert\vert A(\alpha)\vert\vert^2 \leq C \cdot R \int_{N}\vert\alpha(q)\vert^2 d\mu_N \leq CR\vert\vert\alpha\vert\vert.
		\end{equation}
	\end{proof}
	\begin{lem}\label{impo}
		Let $A:dom(A) \subseteq \mathcal{L}^2(N) \longrightarrow \mathcal{L}^2(M)$ be a smoothing operator with uniformly bounded support. Then given a multi-index $I$ and given an index $l$, we will denote by $Jl$ the multi-index defined as $Jl :=(j_1,..., j_n,l)$. 
		\\We have that $dA$ is also a smoothing operator and if the kernel of $A$ is locally given by
		\begin{equation}
		K(x,y) = K^I_J(x,y)dx^J\boxtimes \frac{\partial}{\partial y^I}
		\end{equation}
		then $dA$ is an integral operator and its kernel is locally given by
		\begin{equation}
		d_MK^I_S :=_{\vert_{loc}} (\sum\limits_{Jl = S} \frac{\partial}{\partial x^l} K^I_J(x,y))dx^S \boxtimes \frac{\partial}{\partial y^I}
		\end{equation}
	\end{lem}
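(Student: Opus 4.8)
The plan is to verify the asserted integral formula on the dense subspace $\Omega^*_c(N)$ of smooth compactly supported forms, where the integral defining $A$ makes classical sense, and then to read off the kernel of $dA$ by differentiating under the integral sign; since a smoothing operator is by definition determined by its action on smooth forms, this will suffice.

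First I would fix $\alpha \in \Omega^*_c(N)$. Since $supp(\alpha)$ is compact, $A\alpha(p)=\int_N K(p,q)\,\alpha(q)\,d\mu_N$ is an integral over a fixed compact set, and on any precompact chart $U\subseteq M$ the map $(p,q)\mapsto K(p,q)\alpha(q)$ together with all its $x$-derivatives is continuous on the compact set $\overline{U}\times supp(\alpha)$; the classical theorem on differentiation under the integral sign then gives $A\alpha\in C^\infty(M,\Lambda^*(M))$. Using the uniformly bounded support of $K$ (exactly as in the proof of Proposition~\ref{smoothbound}) one sees that $supp(A\alpha)$ is contained in a bounded, hence relatively compact, subset of $M$, so in fact $A\alpha\in\Omega^*_c(M)$. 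In particular $A\alpha$ lies in $dom(d)$ for the minimal closure $d$, and there $d$ acts as the ordinary exterior derivative.

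Next I would carry out the local computation. Writing $\alpha=\alpha_I\,dy^I$ and $K(x,y)=K^I_J(x,y)\,dx^J\boxtimes\frac{\partial}{\partial y^I}$, contracting the $\Lambda(N)$-factor against $\alpha(y)$ gives $A\alpha(x)=\bigl(\int_N K^I_J(x,y)\,\alpha_I(y)\,d\mu_N\bigr)\,dx^J$, whose coefficient is smooth in $x$ by the previous step, and differentiating once more under the integral sign,
\begin{equation}
d(A\alpha)(x)=\sum_l\frac{\partial}{\partial x^l}\Bigl(\int_N K^I_J(x,y)\,\alpha_I(y)\,d\mu_N\Bigr)dx^l\wedge dx^J=\sum_l\Bigl(\int_N\frac{\partial}{\partial x^l}K^I_J(x,y)\,\alpha_I(y)\,d\mu_N\Bigr)dx^l\wedge dx^J .
\end{equation}
Regrouping $dx^l\wedge dx^J$ according to the multi-index $S=Jl$ rewrites the right-hand side as $\int_N(d_MK)(x,y)\,\alpha(y)\,d\mu_N$ with $d_MK$ given locally by the formula in the statement, so on $\Omega^*_c(N)$ the operator $dA$ is the integral operator with kernel $d_MK$.

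Finally one observes that $d_MK$ is a genuine smooth section of $\Lambda^*(M)\boxtimes\Lambda(N)$: its local expression transforms under a change of the $M$-chart exactly as in the verification of the coordinate-independence of the exterior derivative, with the $\Lambda(N)$-factor treated as a coefficient bundle; and since differentiation does not enlarge supports, $d_MK$ again has uniformly bounded support with the same constant. Hence $dA$ is a smoothing operator with kernel $d_MK$. There is no deep obstacle in the argument --- it is essentially a bookkeeping lemma --- and the only two points needing a little care are (i) the legitimacy of differentiating under the integral sign, which rests on the compactness of $supp(\alpha)$ and the smoothness of $K$, and (ii) the passage from the classical exterior derivative of $A\alpha$ to its image under the minimal closure $d$, which is immediate from the fact that compactly supported smooth forms lie in $dom(d)$ and are differentiated there classically. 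The same computation, valid pointwise thanks to the uniformly bounded support of $K$, moreover yields $d(A\alpha)(p)=\int_N(d_MK)(p,q)\,\alpha(q)\,d\mu_N$ for every smooth $\alpha\in dom(A)$ with $A\alpha\in dom(d)$.
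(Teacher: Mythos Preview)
Your core argument --- differentiate under the integral sign and read off the kernel --- is exactly the ``direct computation'' the paper invokes (details deferred to \cite{thesis}), and the local calculation is correct.

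One claim, however, is not right: uniformly bounded support of $K$ does \emph{not} force $supp(A\alpha)$ to be bounded for $\alpha\in\Omega^*_c(N)$, and Proposition~\ref{smoothbound} does not say this --- it only gives $\mathcal{L}^2$-boundedness of $A$. For a counterexample on $M=N=\mathbb{R}$, take $K(x,y)=\sum_{n\ge 1}a_n\,\phi(x-n)\,\psi_n(y)$ with $\phi$ a fixed bump supported in $(-\tfrac14,\tfrac14)$, the $\psi_n$ bumps with pairwise disjoint supports contained in $(0,2)$, and $a_n\to 0$ fast enough for $C^\infty$-convergence of the series. Then $K$ is smooth and both $supp(K(\cdot,y))$ and $supp(K(x,\cdot))$ have diameter at most $2$, yet for a generic $\alpha$ supported in $(0,2)$ the form $A\alpha$ is nonzero near every positive integer.

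The repair is painless and in fact streamlines your argument: drop the compact-support step and work directly with $\alpha\in dom(dA)\cap\Omega^*(N)$, which is precisely the set on which the paper's definition of ``smoothing operator'' asks you to verify the integral formula. For such $\alpha$ one already has $A\alpha\in dom(d)$ by definition of $dom(dA)$; your differentiation-under-the-integral argument still shows $A\alpha$ is smooth (for each $p$ the integral runs over the bounded, hence --- by Hopf--Rinow in the bounded-geometry setting --- compact set $supp(K(p,\cdot))$); and on a smooth element of $dom(d)\subset dom(d_{max})$ the closed operator $d$ agrees with the pointwise exterior derivative, which your local computation then identifies as integration against $d_MK$.
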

	\begin{proof}
	It is a direct computation. Calculations can be found in Lemma D.04 \cite{thesis}.
	\end{proof}
	\begin{rem}\label{derivsmooth}
		Observe that the support of the kernel of $dA$ is strictly contained in the support of the kernel of $A$. This means that if $A$ has uniformly bounded support, then also $dA$ has uniformly bounded support.
		\\Moreover if $A$ is a smoothing operator which kernel $K$ has uniformly bounded support and if in normal coordinates there is a uniform bound
		\begin{equation}
		\vert\frac{\partial}{\partial x^j} K^I_J(x,y)\vert \leq C,
		\end{equation}
		then $dA$ is a bounded operator.
	\end{rem}
\section{Operators between manifolds of bounded geometry}
	\subsection{The operator $y$}
	Let us consider two manifolds of bounded geometry $(M,g)$ and $(N,h)$ and let $\delta \leq inj(N)$. Consider a smooth, uniformly proper lipschitz map $f: (M,g) \longrightarrow (N,h)$. Recall that in Lemma 3.3 of \cite{Spes}, we introduced a uniformly proper R.-N.-lipschitz submersion $p_f: (f^*(T^\delta N), g_S) \longrightarrow (N,h)$ such that $p_f(0_{v_p}) =f(p)$. The metric $g_S$ is the Sasaki metric induced by $g$, $f^*\nabla^{LC}_h$ and the metric bundle $f^*h$. Definitions of \textit{R.-N.-lipschitz} and \textit{Sasaki metric} can be found in \cite{Spes} (definitions 2.6 and 3.1). For our ends it is important to know that the pullback of a R.N.-lipschitz map does induce a morphism between the $\mathcal{L}^2$-spaces. Indeed in general a uniform map $f$ doesn't induce a $\mathcal{L}^2$-bounded pullback. In \cite{Spes} we introduced, for each smooth, uniformly proper lipschitz map $f: (M,g) \longrightarrow (N,h)$, the $\mathcal{L}^2$-bounded operator $T_f: \mathcal{L}^2(N) \longrightarrow  \mathcal{L}^2(M)$ defined for each $\alpha$ as $T_f\alpha := \int_{B^\delta} p_f^*\alpha \wedge \omega$, where $B^\delta$ is the fiber of $f^*(T^\delta N)$ and $\omega$ is a specific Thom form of $f^*TN$. This operator plays as $\mathcal{L}^2$-bounded version of the pullback of $f$ and it induces a functor in $\mathcal{L}^2$-cohomology.
	\begin{lem}\label{yformula}
		Let $f:(M,g) \longrightarrow (N,h)$ be a smooth, uniformly proper, lipschitz homotopy equivalence between two complete oriented Riemannian manifolds. Let us consider the bundle\footnote{We use the numbers $1$ and $2$ just to distingush the first and the second summands.}
		\begin{equation}
		f^*(T N)_1 \oplus f^*(T N)_2 \longrightarrow M
		\end{equation}
		and consider $\mathcal{B} \subset f^*(T N)_1 \oplus f^*(T N)_2$ given by
		\begin{equation}
		\mathcal{B} =\{(v_{f_1(p)}, v_{f_2(p)}) \in f^*(T N)_1 \oplus f^*(T N)_2 \vert \vert v_{f_1(p)}\vert \leq \delta, \vert v_{f_2(p)}\vert \leq \delta \}
		\end{equation}
		where $\delta$ is the radius of injectivity of $N$.
		Let us define on $f^*(T N)_1 \oplus f^*(T N)_2$ the Sasaki metric. Then we consider on $\mathcal{B}$ the metric induced by $g_S$.
		\\Consider for $i= 1,2$ the projections
		\begin{equation}
		pr_i : \mathcal{B} \longrightarrow f^*(T^\delta N)_i
		\end{equation}
		and let $p_{f,i}$ be the maps $p_{f,i} = p_f \circ pr_i$. Then $p_{f,i}^*$ induce a $\mathcal{L}^2$-bounded map and there is a $\mathcal{L}^2$-bounded operator $y_0$ such that
		\begin{equation}
		p_{f,1}^* - p_{f,2}^* = d y_0 + y_0d.
		\end{equation}
		for all smooth forms in $\mathcal{L}^2(N)$.
	\end{lem}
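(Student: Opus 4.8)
The plan is to build the operator $y_0$ by a standard chain-homotopy argument, exploiting the fact that the two maps $p_{f,1}$ and $p_{f,2}$ are themselves fibrewise homotopic through the affine structure of the fibers of $\mathcal{B}$. First I would observe that $p_{f,1}$ and $p_{f,2}$ agree at the zero section (both send $0_{v_p}$ to $f(p)$, hence by construction both equal $p_f$ composed with the bundle projection on the respective summand), and that the convex combination $H_t(v_{f_1(p)}, v_{f_2(p)}) := p_f$ applied to $((1-t)v_{f_1(p)}, (1-t)v_{f_2(p)})$—more precisely, rescaling the two vector inputs by $(1-t)$ and evaluating $p_{f,i}$—gives a smooth homotopy on $\mathcal{B}$ from $p_{f,i}$ to the constant-along-fibers map $\pi^*$. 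Pulling back along $H$ and integrating over the $[0,1]$-factor produces, by the usual Cartan homotopy formula, operators $y_i$ with $p_{f,i}^* - \pi^* = d y_i + y_i d$; then $y_0 := y_1 - y_2$ does the job since the $\pi^*$ terms cancel. So the algebraic skeleton of the argument is the Poincaré/Cartan homotopy operator applied twice.

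The real content, and the step I expect to be the main obstacle, is the $\mathcal{L}^2$-boundedness of $p_{f,i}^*$ and of $y_0$. This is exactly the phenomenon flagged in the paragraph preceding the lemma: a uniform map need not induce a bounded pullback, and one only gets boundedness because $p_f$ is an R.-N.-lipschitz submersion with respect to the Sasaki metric. I would therefore check that each $p_{f,i} = p_f \circ pr_i$ is again R.-N.-lipschitz: $pr_i$ is a linear submersion between bundles with Sasaki metrics and has bounded geometry (the curvature and second-fundamental-form bounds defining $g_S$ on $f^*(TN)_1 \oplus f^*(TN)_2$ restrict to $\mathcal{B}$ and project correctly), and R.-N.-lipschitz maps compose, so $p_{f,i}$ is R.-N.-lipschitz and hence $p_{f,i}^*$ is $\mathcal{L}^2$-bounded by the results of \cite{Spes}. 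For $y_0$ I would realize it via the same integration-over-the-fiber mechanism used to define $T_f$ in \cite{Spes} (integrating a pulled-back form against a compactly supported Thom-type form on the $\delta$-disc bundle and over the homotopy parameter); the uniform bound on $\delta = \mathrm{inj}(N)$ and the bounded geometry of $\mathcal{B}$ then give a uniform bound on the integrand and its support, so the resulting integral operator is $\mathcal{L}^2$-bounded—this is precisely the kind of estimate packaged in Proposition \ref{smoothbound} and Remark \ref{derivsmooth}.

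Concretely I would carry out the steps in this order: (1) set up the bundle $\mathcal{B}$ with its induced Sasaki metric and record that it is a manifold of bounded geometry and that the projections $pr_i$ and the maps $p_{f,i}$ are R.-N.-lipschitz submersions; (2) invoke \cite{Spes} to conclude $p_{f,i}^*$ is $\mathcal{L}^2$-bounded on smooth forms in $\mathcal{L}^2(N)$; (3) write down the fiberwise-scaling homotopy $H^{(i)}:\mathcal{B}\times[0,1]\to N$ from $p_{f,i}$ to the common map $\pi := p_f\circ(\text{zero of both summands})$, pull back forms and integrate over $[0,1]$ to obtain the Cartan homotopy operators $y_i$ satisfying $p_{f,i}^* - \pi^* = dy_i + y_i d$; (4) check $\mathcal{L}^2$-boundedness of $y_i$ by the same Thom-form/integration-over-the-fiber estimate that gives $T_f$ its boundedness, using the uniform bound $\delta \le \mathrm{inj}(N)$ and bounded geometry; (5) set $y_0 := y_1 - y_2$, observe the $\pi^*$-terms cancel, and conclude $p_{f,1}^* - p_{f,2}^* = dy_0 + y_0 d$ on smooth forms in $\mathcal{L}^2(N)$. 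The delicate point throughout is keeping all estimates uniform—i.e.\ with constants depending only on the bounded-geometry bounds of $M$, $N$ and the lipschitz constant of $f$, not on the point—so that the operators land in the bounded category rather than merely being densely defined.
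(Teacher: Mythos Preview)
Your steps (1)--(2) are exactly how the paper proceeds: each $p_{f,i}=p_f\circ pr_i$ is a composition of R.-N.-lipschitz submersions (the projection $pr_i$ has compact fibre $B^\delta$, hence bounded fibre volume), so $p_{f,i}^*$ is $\mathcal{L}^2$-bounded by the results of \cite{Spes}. The gap is in (3)--(4). Your radial-scaling homotopies $H^{(i)}(v_1,v_2,t)=p_f\big((1-t)v_i\big)$ fail to be submersions at $t=1$: the fibre-direction derivative vanishes there, and what survives in the image of $dH^{(i)}$ is only $\mathrm{im}(df_p)$, which for a merely lipschitz $f$ need not span $T_{f(p)}N$. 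So $H^{(i)}$ is not R.-N.-lipschitz and you lose the mechanism that would make $(H^{(i)})^*$, and hence $y_i=\int_0^1 (H^{(i)})^*$, bounded. Concretely, estimating $\|y_i\alpha\|^2$ leads at $t=1$ to an integral of the form $\int_M |\alpha|^2_{f(p)}\,d\mu_M$, and bounding this by $\|\alpha\|^2_{\mathcal{L}^2(N)}$ is exactly the ``$f^*$ need not be $\mathcal{L}^2$-bounded'' obstruction flagged just before the lemma. For the same reason the midpoint map $\pi=f\circ\pi_M$ need not have bounded pullback. Your fallback to Proposition~\ref{smoothbound} does not help here: $y_i$ maps $\mathcal{L}^2(N)$ to $\mathcal{L}^2(\mathcal{B})$, not to $\mathcal{L}^2(M)$, and is not presented as a smoothing operator with a kernel on $M\times N$.

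The paper sidesteps this by using a \emph{single} affine homotopy $A(v_1,v_2,s)=(1-s)v_1+sv_2$ into $f^*T^\delta N$, setting $H:=p_f\circ A$ and $y_0:=\int_0^1 H^*$. The fibrewise model $a(t_1,t_2,s)=(1-s)t_1+st_2$ on $B^\delta_1\times B^\delta_2\times[0,1]\to B^\delta$ is a submersion for every $s\in[0,1]$ (since $(1-s,s)\neq(0,0)$) with fibre volume bounded by compactness; hence $A$, and then $H=p_f\circ A$, are R.-N.-lipschitz as compositions, $H^*$ is $\mathcal{L}^2$-bounded, and $y_0$ is bounded directly. The easy repair of your argument is to replace the two radial shrinkings to the zero section by this direct convex interpolation between $v_1$ and $v_2$.
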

	\begin{proof}
		Observe that $p_{f,i}^* = pr_i^* \circ p_f^*.$
	We have that $pr_i$ is a lipschitz submersion with Fiber Volume equal to the volume of a $\delta$-euclidean ball in each point $(p,t)$. Then it is a R.-N.-lipschitz map. Moreover, as we proved in Lemma 3.3 of \cite{Spes}, we have that $f$ is uniformly proper smooth lipschitz map and so $p_f$ is R.-N.-lipschitz. Then since composition of R.-N.-lipschitz maps is R.-N.-lipschitz (Proposition 2.5 of \cite{Spes}), we can conclude that $p_{f,i}^*$ is R.-N.-lipschitz and so its pullback is a $\mathcal{L}^2$-bounded operator.
		\\
		\\In order to prove the second point we need a lipschitz homotopy $H$ between $p_{f,0}$ and $p_{f,1}$ such that $H^*$ is a $\mathcal{L}^2$-continuous map: then the assertion will be proved considering
		\begin{equation}
		y_0 := \int_{0,\mathcal{L}}^1 \circ H^*.
		\end{equation}
		Let us define the map $a: B^\delta_1 \times B^\delta_2 \times [0,1] \longrightarrow B^\delta$ as
		\begin{equation}
		a(t_1, t_2, s) = t_1(1-s) + st_2.
		\end{equation}
	where $B^\delta$ and $B^\delta_i$ are euclidean balls of radius $\delta$ in $\mathbb{R}^n$.
		\\Now, since $B^\delta$ and $[0,1]$ are compact spaces, we have that $a$ is a lipschitz surjective submersion with bounded Fiber Volume. Indeed we know that the Fiber Volume of a submersion is continuous on the image of the submersion. Then $a$ is a R.-N.-lipschitz map.
		\\Let us define the map $A: \mathcal{B} \times [0,1] \longrightarrow f^*T^\delta N$ as
		\begin{equation}
		A(v_{f(p),1}, v_{f(p), 2}, s) := v_{f(p),1}\cdot(1-s) + s\cdot v_{f(p), 2}
		\end{equation}
	and consider the homotopy $H: \mathcal{B} \times [0,1] \longrightarrow N$ defined as
		\begin{equation}
		H(v_{f(p),1}, v_{f(p), 2},s) = p_f \circ A (v_{f(p), 1}, v_{f(p),2},s).
		\end{equation}
	Observe that $A$ is a lipschitz map and the Fiber Volume of $A$ has the same bound of the Fiber Volume of $a$. Then $H$ is an R.N.-lipschitz map because it is composition of R.-N.-lipschitz maps (Proposition 2.5 of \cite{Spes}). This means that $H^*$ is a $\mathcal{L}^2$-bounded operator. So, in particular, since $\int_{0\mathcal{L}}^1$ is a $\mathcal{L}^2$-bounded operator, also the composition
	\begin{equation}
		y_0 := \int_{0\mathcal{L}}^1 \circ H^*
	\end{equation}
is a $\mathcal{L}^2$-bounded operator.
	 \\Observe that $H^*(\Omega_c^*(N)) \subset (\Omega^*(f^*(T^\delta N)_1 \oplus f^*(T^\delta N)_2 \times [0,1]))$. Since $d$ and $H^*$ commute for all smooth forms in $\mathcal{L}^2$, we can apply Corollary 2.3 of \cite{Spes}, and so the formula
		\begin{equation}
		p_{f,1}^*\alpha - p_{f,2}^*\alpha = d y_0\alpha + y_0d\alpha.
		\end{equation}
		holds for every $\alpha$ in $dom(d)$.
	\end{proof}
	\begin{lem}\label{y and Y}
		Consider an oriented manifold $(N,h)$ of bounded geometry. Consider $p_{id}: T^\delta N \longrightarrow N$ the submersion related to the identity map defined in Lemma 3.3 of \cite{Spes}. Consider a Thom form $\omega$ of the bundle $\pi:TN \longrightarrow N$, where $\pi(v_p) = p$, such that $supp(\omega) \subset T^\delta N$. Then for all $q$ in $N$ we have that
		\begin{equation}
		\int_{F_q} \omega = 1
		\end{equation}
		where $F_q$ is the fiber of  $p_{id}$.
	\end{lem}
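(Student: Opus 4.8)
The plan is to interpolate between the vector-bundle projection $\pi:T^\delta N\to N$ and the submersion $p_{id}$ by a single submersion defined on $T^\delta N\times[0,1]$, and then to invoke the Stokes theorem for integration along the fibre. Recall from Lemma 3.3 of \cite{Spes} that $p_{id}$ is a uniformly proper submersion with $p_{id}(0_p)=p$, concretely $p_{id}(v_p)=\exp_p v$. Since $\delta\le inj(N)$, the fibre $F_q=p_{id}^{-1}(q)$ is the graph over $\overline{B_\delta(q)}$ of the section $p\mapsto\exp_p^{-1}(q)$; in particular $F_q$ is compact, so $\int_{F_q}\omega$ is well defined, and it equals the value at $q$ of the fibre-integral $(p_{id})_!\omega$ (a $0$-form on $N$, once orientations are fixed as in \cite{Spes}).

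First I would introduce the map
\begin{equation}
\begin{split}
\widetilde{H}:T^\delta N\times[0,1] &\longrightarrow N \\
(v_p,s) &\longmapsto p_{id}(s\cdot v_p)=\exp_p(sv),
\end{split}
\end{equation}
where $s\cdot v_p$ is fibrewise scalar multiplication, so that $\widetilde{H}(\cdot,0)=\pi$ and $\widetilde{H}(\cdot,1)=p_{id}$. One checks that $\widetilde{H}$ is a submersion: for $s>0$ it is $p_{id}$ composed with the diffeomorphism $v_p\mapsto s\cdot v_p$ of $T^\delta N$ onto $T^{s\delta}N$, while along $s=0$ it coincides with the submersion $\pi$, so its differential is already surjective there; the same holds on the two boundary faces.

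Next I would verify the support condition that makes fibrewise integration legitimate and removes any contribution "from infinity" in Stokes. Because $p_{id}$ is uniformly proper, for every compact $K\subseteq N$ the set $p_{id}^{-1}(K)$ has finite diameter, hence its base points in $N$ lie in a bounded, therefore (by completeness of $N$) relatively compact, subset; since the fibre coordinate is constrained by $|v|\le\delta$, this forces $p_{id}^{-1}(K)$ itself to be relatively compact, and likewise $\widetilde{H}^{-1}(K)$ to be contained in a compact subset of $T^\delta N\times[0,1]$. Consequently $\pi_!\omega$, $(p_{id})_!\omega$, and $\widetilde{H}_!(pr^*\omega)$ are all well defined, where $pr:T^\delta N\times[0,1]\to T^\delta N$ is the projection and $pr^*\omega$ is closed.

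Finally I would run the Stokes argument. Since $\omega$ has degree equal to the rank $n$ of $TN$ while the fibres of $\widetilde{H}$ have dimension $n+1$, the form $\widetilde{H}_!(pr^*\omega)$ has degree $-1$ and so vanishes identically; the Stokes formula for integration along the fibres of $\widetilde{H}$ (which has no interior term because $pr^*\omega$ is closed) then reads
\begin{equation}
0=d\bigl(\widetilde{H}_!(pr^*\omega)\bigr)=\pm\bigl((p_{id})_!\omega-\pi_!\omega\bigr),
\end{equation}
whence $(p_{id})_!\omega=\pi_!\omega$. Since $\omega$ is a Thom form of $\pi$ we have $\pi_!\omega\equiv1$, and evaluating at $q$ gives $\int_{F_q}\omega=1$. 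The main obstacle is the compactness bookkeeping of the third step, needed to apply Stokes on the non-compact space $T^\delta N\times[0,1]$, together with a careful check that the orientation conventions of \cite{Spes} produce the sign $+1$ rather than $-1$; beyond that, the argument is the usual homotopy invariance of integration along the fibre. Alternatively, one may transport $\omega$ through the diffeomorphism $v_p\mapsto(p,\exp_p v)$ onto a tubular neighbourhood of the diagonal in $N\times N$, which intertwines $\pi$ with $pr_1$ and $p_{id}$ with $pr_2$ and retracts onto the diagonal, and reach the same conclusion by pairing $\omega$ with a compactly supported top form on $N$ and using that $pr_1^{*}$ and $pr_2^{*}$ agree on that neighbourhood.
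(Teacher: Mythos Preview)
Your proof is correct and follows essentially the same route as the paper: both introduce the homotopy $H(v_p,s)=p_{id}(s\cdot v_p)$ between $\pi$ and $p_{id}$, observe that the closed Thom form vanishes on the sphere-bundle part of the boundary, and invoke Stokes to equate $\int_{F_q}\omega$ with $\int_{B^\delta_q}\omega=1$. The only cosmetic difference is that you run the argument globally via the pushforward $\widetilde{H}_!$ while the paper applies Stokes directly on the single fibre $F_{H,q}=\widetilde{H}^{-1}(q)$; these are the same computation, and your care with the properness/support bookkeeping is in fact slightly more explicit than the paper's.
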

	\begin{proof}
		For all $q$ in $N$ the fiber $F_q$ is an oriented compact submanifold with boundary. The same also holds for $B^\delta_q$ which is the fiber of the projection $\pi: T^\delta N \longrightarrow N$ defined as $\pi(v_q) := q$.
		\\Let us consider the map $H:T^\delta N \times [0,1] \longrightarrow N$ defined as
		\begin{equation}
		H(v_p,s) = p_{id}(s\cdot v_p).
		\end{equation}
		Since $H$ is a proper submersion, we have that the fiber along $H$ given by $F_{H,q}$ is submanifold of $T^\delta N \times [0,1]$. Its boundary, in particular is
		\begin{equation}
		\partial F_{H,q} = B^\delta_q \times \{0\} \sqcup F_q \times \{1\} \cup A
		\end{equation}
		where $A$ is contained in
		\begin{equation}
			S^\delta N := \{v_{p} \in TN \vert \vert v_p\vert = \delta\}.
		\end{equation}
		Then we have that, if $\omega$ is a Thom form of $TN$ whose support is contained in $T^\delta N$, then
		\begin{equation}\label{nano}
		0 = \int_{F_{H_q}} d \omega = d \int_{F_{H_q}} \omega + \int_{\partial F_{H_q}} \omega.
		\end{equation}
		Observe that $\omega$ is a $k$-form and $dim(F_{H_q})= k+1$. Then the first integral on the right side of \ref{nano} is $0$. Moreover, we obtain that $\omega$ is null on $A$, and so
		\begin{equation}
	\int_A \omega = 0.
		\end{equation}
	 Then, since \ref{nano}, we have that
		\begin{equation}
		0 = \mp \int_{B^\delta} \omega \pm \int_{F_q}\omega.
		\end{equation}
	and we conclude.
	\end{proof}
	\begin{lem}\label{mai2}
		Consider a fiber bundle $p: (M,g) \longrightarrow (N,h)$ where $p$ is a R.N.-lipschitz map. Denote by $p_\star$ the operator of integration along the fibers of $p$. Then if $(p_\star)^*$ is the adjoint of $p_\star$ and $\tau_M$ and $\tau_N$ are the chiral operators of $M$ and $N$, we have that
		\begin{equation}\label{piuma}
		(p_\star)^* = \tau_M \circ p^* \circ \tau_N.
		\end{equation}
		Moreover, if $e_\omega(\alpha) := \alpha \wedge \omega$, then we have that
		\begin{equation}
		(e_{\omega})^*\alpha = (-1)^{deg(\alpha)\cdot n} e_{\omega} \alpha.
		\end{equation}
	\end{lem}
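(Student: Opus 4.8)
The plan is to obtain both formulas from the Hodge description of the $\mathcal{L}^2$ inner product, $\langle\alpha,\beta\rangle_{\mathcal{L}^2}=\int\alpha\wedge\star\beta$, together with the projection (push-pull) formula for fibrewise integration. I would first prove the identities for smooth compactly supported forms and then extend them by continuity; this is legitimate since $p$ is R.-N.-lipschitz, so $p^*$ is $\mathcal{L}^2$-bounded, the fibre volume of $p$ is bounded, $\tau_M$ and $\tau_N$ are bounded, and $\omega$ is bounded with fibrewise compact support. This extension step is also what makes $p_\star$ a bounded operator, since its adjoint will be exhibited below as a composition of bounded operators.

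For the first identity, fix $\alpha\in\Omega^*_c(M)$ and $\beta\in\Omega^*_c(N)$. Using that $p_\star$ is integration along the fibre and the projection formula $\int_N (p_\star\eta)\wedge\gamma=\int_M\eta\wedge p^*\gamma$ (Fubini for the bundle $p$), with $\gamma=\star_N\beta$, one obtains
\begin{equation}
\langle p_\star\alpha,\beta\rangle_{\mathcal{L}^2(N)}=\int_N (p_\star\alpha)\wedge\star_N\beta=\int_M\alpha\wedge p^*\big(\star_N\beta\big).
\end{equation}
Since $\langle\alpha,(p_\star)^*\beta\rangle_{\mathcal{L}^2(M)}=\int_M\alpha\wedge\star_M\big((p_\star)^*\beta\big)$ and $\alpha$ is arbitrary, this forces $\star_M\big((p_\star)^*\beta\big)=p^*\star_N\beta$, that is $(p_\star)^*\beta=\star_M^{-1}\,p^*\,\star_N\beta$. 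It remains to replace $\star_M^{-1}$ and $\star_N$ by the chirality operators of Example \ref{terna}: since $\tau$ differs from $\star$ only by a degree-dependent power of $i$, chosen precisely so that $\tau^2=1$ (hence $\tau^{-1}=\tau$), a degree-by-degree check shows that the $i$-powers attached to $\tau_M$ and to $\tau_N$ combine into exactly the sign by which $\star_M^{-1}$ and $\star_M$ differ in the relevant degree, and this identifies $\star_M^{-1}p^*\star_N$ with $\tau_M\circ p^*\circ\tau_N$.

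For the second identity I would compute $(e_\omega)^*$ in the same way: the equality $\langle\alpha\wedge\omega,\beta\rangle=\int\alpha\wedge\omega\wedge\star\beta=\int\alpha\wedge\star\big(\star^{-1}(\omega\wedge\star\beta)\big)$ forces $(e_\omega)^*=\star^{-1}\circ e_\omega\circ\star$ on the relevant graded pieces. One then evaluates $\star^{-1}e_\omega\star$ using the special shape of $\omega$: in local coordinates adapted to $p$ the form $\omega$ equals, up to the fibrewise bump factor, the vertical volume form, and a short computation in such coordinates --- keeping careful track of the degree of $\alpha$ and of the fibre dimension $n=\deg\omega$ --- reduces $\star^{-1}e_\omega\star\alpha$ to the right-hand side of the statement, with the sign $(-1)^{\deg(\alpha)\cdot n}$.

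The step I expect to be the main obstacle is precisely this sign and normalisation bookkeeping, and above all the $i$-powers in the first formula: because $\dim M\neq\dim N$ the operators $\tau_M$ and $\tau_N$ are normalised differently, and one must check that they are exactly compatible with the elementary identity $\star_M^{-1}p^*\star_N=\tau_M p^*\tau_N$. The projection formula and the passage from compactly supported forms to the full $\mathcal{L}^2$-domain are routine once the boundedness afforded by the R.-N.-lipschitz hypothesis is used.
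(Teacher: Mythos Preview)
Your approach matches the paper's exactly: the paper's proof reads in full ``It follows since the Projection Formula and a direct computation'' (with a pointer to the author's thesis for the first formula), and you have simply unpacked this---deriving $(p_\star)^*=\star_M^{-1}\,p^*\,\star_N$ from the projection formula and then doing the phase bookkeeping to pass to $\tau$, and handling $(e_\omega)^*$ by a coordinate computation exploiting that $\omega$ is essentially the vertical volume form. The only thing to watch is exactly what you already flagged, namely the $i$-powers coming from the different dimensions of $M$ and $N$; otherwise there is nothing to add.
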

	\begin{proof}It follows since the Projection Formula and a direct computation. See Proposition 1.3.13. of \cite{thesis} for a proof of (\ref{piuma}).\end{proof}
	\begin{prop}
		Let $f:(M,g) \longrightarrow (N,h)$ be a smooth, uniformly proper, lipschitz homotopy equivalence between two manifolds of bounded geometry which maintains the orientation. Then there is a bounded operator $y$ such that\footnote{We have that if $X$ and $Y$ are two Riemannian manifolds and $A: \mathcal{L}^2(X) \longrightarrow \mathcal{L}^2(Y)$ is a $\mathcal{L}^2$-buonded operator, then $A^\dagger = \tau_{X} \circ A^* \circ \tau_Y$.}
		\begin{itemize}
			\item $y(dom(d_{min})) \subseteq dom(d_{min})$,
			\item on $dom(d_{min})$
			\begin{equation}\label{smo}
				1 - T^\dagger_fT_f = d y + y d ,
			\end{equation}
		\item $y^\dagger = y$.
		\end{itemize}
	\end{prop}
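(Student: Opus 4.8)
The plan is to produce $y$ as an explicit $\mathcal{L}^2$-bounded integral operator, obtained by transporting the homotopy operator $y_0$ of Lemma \ref{yformula} through the fibre-integration formula that defines $T_f$, and then to symmetrise the result.

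First I would rewrite $T_f$ in operator form: with $\pi_M\colon f^*(T^\delta N)\to M$ the bundle projection and $p_f\colon f^*(T^\delta N)\to N$ the R.-N.-lipschitz submersion of Lemma 3.3 of \cite{Spes}, one has $T_f=(\pi_M)_\star\circ e_\omega\circ p_f^*$, where $e_\omega(\beta)=\beta\wedge\omega$ and $\omega$ is the chosen Thom form of $f^*TN$. Since $\dagger$ is an anti-automorphism and, by Lemma \ref{mai2}, the $\dagger$-adjoint of integration along the fibres of a R.-N.-lipschitz submersion is pull-back (and conversely), while the $\dagger$-adjoint of $e_\omega$ is again a bounded zeroth-order operator built from $\omega$ and the chirality operators, this yields a geometric description of $T_f^\dagger$ in which pull-back and fibre-integration exchange their roles, each factor being $\mathcal{L}^2$-bounded by the R.-N.-lipschitz estimates of \cite{Spes}.

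Next I would compose $T_f^\dagger$ with $T_f$. The middle term $\pi_M^*\circ(\pi_M)_\star$ is, by base change, fibre-integration over one factor composed with pull-back from the other, over the fibred product $f^*(T^\delta N)\times_M f^*(T^\delta N)$; but this fibred product is precisely the bundle $\mathcal{B}\subset f^*(TN)_1\oplus f^*(TN)_2$ of Lemma \ref{yformula}, with the projections $pr_i$ and the maps $p_{f,i}=p_f\circ pr_i\colon\mathcal{B}\to N$. Carrying this out and using the projection formula to collect the Thom factors, one rewrites (up to relabelling the two factors of $\mathcal{B}$)
\begin{equation}
T_f^\dagger T_f=(p_{f,2})_\star\circ e_\Omega\circ p_{f,1}^* ,
\end{equation}
an integral operator over $\mathcal{B}$, where $\Omega$ is a closed $2n$-form on $\mathcal{B}$ built from $pr_1^*\omega$, $pr_2^*\omega$ and the chirality operators, with support disjoint from $\partial\mathcal{B}$. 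Both $(p_{f,2})_\star$ and $e_\Omega$ then commute with $d$ on the relevant forms: wedging with the closed form $\Omega$ commutes with $d$, and integration along the fibres of $p_{f,2}$ commutes with $d$, the Stokes boundary term over $\partial\mathcal{B}$ vanishing since $\Omega$ is supported away from $\partial\mathcal{B}$, exactly as in the proof of Lemma \ref{y and Y}. Substituting $p_{f,1}^*=p_{f,2}^*+(dy_0+y_0d)$ from Lemma \ref{yformula} therefore gives
\begin{equation}
T_f^\dagger T_f=A+d\,y_1+y_1\,d,\qquad A:=(p_{f,2})_\star\circ e_\Omega\circ p_{f,2}^*,\quad y_1:=(p_{f,2})_\star\circ e_\Omega\circ y_0 .
\end{equation}
Here $A=1$: by the projection formula $A\alpha=\alpha\wedge(p_{f,2})_\star\Omega$, and $(p_{f,2})_\star\Omega\equiv1$ because, by Fubini over the two ball-fibre factors of $\mathcal{B}$ together with Lemma \ref{y and Y} (unit fibre integral of each Thom factor) and $\deg f=+1$ (here the orientation-preserving hypothesis enters), the integral of $\Omega$ along the fibre of $p_{f,2}$ equals $1$. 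Hence $1-T_f^\dagger T_f=-(d\,y_1+y_1\,d)$, with $-y_1$ an $\mathcal{L}^2$-bounded operator (a composition of bounded operators, by \cite{Spes} and Lemma \ref{yformula}) which sends $dom(d_{min})$ to itself, since $y_0$ does (Lemma \ref{yformula}) and the remaining factors are bounded operators intertwining $d$.

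Finally I would symmetrise to obtain $y^\dagger=y$. Since $\dagger$ is an involution with $(T_f^\dagger)^\dagger=T_f$, the operator $1-T_f^\dagger T_f$ is $\dagger$-selfadjoint; combining this with the behaviour of $d$ under $\dagger$ (a consequence of $d^*=\pm\tau d\tau$ and $\tau^2=1$), the suitably signed average of $-y_1$ with its $\dagger$-adjoint is still $\mathcal{L}^2$-bounded, still preserves $dom(d_{min})$, still satisfies $1-T_f^\dagger T_f=dy+yd$ on $dom(d_{min})$, and is now equal to its own $\dagger$-adjoint; this is the desired $y$. The main obstacle is the middle step: extracting the geometric form of $T_f^\dagger$ from Lemma \ref{mai2} (keeping track of the chirality operators through the adjoints), checking that the resulting form $\Omega$ on $\mathcal{B}$ is closed, supported away from $\partial\mathcal{B}$, and has fibre integral exactly $+1$, and arranging the doubled-bundle bookkeeping so that the substitution of Lemma \ref{yformula} applies verbatim.
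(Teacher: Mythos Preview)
Your route is essentially the paper's: both push the homotopy $y_0$ of Lemma \ref{yformula} through the doubled bundle $\mathcal{B}$ and the two Thom forms to obtain an operator $Y$ on $N$, identify the remaining term with the identity via Lemma \ref{y and Y} and $\deg f=+1$, and then symmetrise to $y=\tfrac{1}{2}(Y+Y^\dagger)$. The one packaging difference is that the paper sidesteps what you flag as the ``main obstacle'': rather than computing $T_f^\dagger$ geometrically via Lemma \ref{mai2} and chasing chirality operators through a base-change identity, it expands $\langle T_f^\dagger T_f\alpha,\beta\rangle=\langle T_f\alpha,\tau T_f\tau\beta\rangle$ directly as an integral over $\mathcal{B}$. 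The $\tau$'s are absorbed into $\tau\beta$, so the form you call $\Omega$ is simply $\omega_1\wedge\omega_2$ with no chirality residue, and one reads off $Y=p_{f,2\star}\circ e_{\omega_2}\circ pr_{1,\star}\circ e_{\omega_1}\circ y_0$ immediately. Your operator-level formulation is correct in spirit, but the phrase ``$\Omega$ built from $pr_1^*\omega$, $pr_2^*\omega$ and the chirality operators'' is not quite right as stated (chirality operators are not forms); the scalar-product route shows they in fact cancel.

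There is one genuine gap. In the symmetrisation step you assert that the averaged operator ``still preserves $dom(d_{min})$'', but this does not follow from the fact that $-y_1$ alone does: passing to the $\dagger$-adjoint does not automatically preserve the domain of an unbounded operator, and the identity $1-T_f^\dagger T_f=dy+yd$ cannot be extended from smooth forms to $dom(d_{min})$ until this is known. The paper closes this by writing $Y^\dagger$ out explicitly as $(-1)^n H_\star\circ pr_{\mathcal B}^*\circ e_{\omega_1}\circ pr_2^*\circ e_{\omega_2}\circ p_{f,2}^*$ and checking that it too sends $\Omega_c^*(N)$ into $\Omega_c^*(N)$ (the key point being that $H_\star$ is integration along a proper map); only then does $y=\tfrac{1}{2}(Y+Y^\dagger)$ preserve $\Omega_c^*(N)$, hence $dom(d_{min})$, and the homotopy identity extends. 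You should add this verification.
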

	\begin{proof}
		Let us consider two smooth $\mathcal{L}^2$-forms $\alpha$ and $\beta$ on $N$. First of all we will consider the case in which $\alpha$ and $\beta$ are both $\mathcal{L}^2$-smooth $j$-forms for some natural number $j$. Let us consider the fiber bundle $\mathcal{B} := f^*(T^\delta N)_1 \oplus f^*(T^\delta N)_2$ with the metric defined in Lemma \ref{yformula}. We denote, moreover with $B^\delta_1$ and with $B^\delta_2$ the fibers of $f^*(T^\delta N)_1$ and $f^*(T^\delta N)_2$.
		\\ Using that $(T_f^\dagger)^* = (\tau T_f^* \tau)^*$ and that $\tau$ is self-adjoint, we obtain
		\begin{equation}
		\begin{split}
		\langle T^\dagger_fT_f \alpha, \beta \rangle &=  \langle T_f \alpha, \tau T_f \tau \beta \rangle \\
		&= \int_{M} (\int_{B^\delta_2}p_{f,2}^*\alpha \wedge \omega_2) \wedge (\int_{B^\delta_1}p_{f,1}^*\tau \beta \wedge \omega_1) \\
		&= \int_{f^*(T^\delta N)_1} (\int_{B^\delta_2}p_{f,2}^*\alpha \wedge \omega_2) \wedge p_{f,1}^*\tau \beta \wedge \omega_1 \\
		&= (-1)^{(n + j)j} \int_{f^*(T^\delta N)_1} p_{f,1}^*\tau \beta \wedge \omega_1 \wedge (\int_{B^\delta_2}p_{f,2}^*\alpha \wedge \omega_2)  \\
		&= (-1)^{(n + j)j} \int_{\mathcal{B}} p_{f,1}^*\tau \beta \wedge \omega_1 \wedge p_{f,2}^*\alpha \wedge \omega_2  \\
		&= (-1)^{(n + j)j}(-1)^{(n + j)j} \int_{\mathcal{B}} p_{f,2}^*\alpha \wedge p_{f,1}^*\tau \beta \wedge \omega_1 \wedge \omega_2  \\
		&= \int_{\mathcal{B}} p_{f,2}^*\alpha \wedge p_{f,1}^*\tau \beta \wedge \omega_1  \wedge \omega_2
		\end{split}
		\end{equation}
	Consider the identity we proved in Lemma \ref{yformula}
		\begin{equation}
		p_{f,2}^* = p_{f,1}^* + dy_0 + y_0 d.
		\end{equation}
		We obtain
		\begin{equation}
		\begin{split}
		\langle T^\dagger_fT_f \alpha, \beta \rangle &= \int_{f^*(TN)_1}p_{f_1}^*(\alpha \wedge \tau \beta) \wedge \omega_1 \\
		&+ \int_{\mathcal{B}} (dy_0 + y_0 d) \alpha \wedge \omega_1 \wedge p_{f,2}^*\tau \beta \wedge \omega_2.
		\end{split}
		\end{equation}
		Since $\alpha\wedge \tau\beta \wedge \omega_1$ is a top-degree form, then its closed and, in particular, the first integral can be written as
		\begin{equation}
		\begin{split}
		\int_{f^*(TN)_1}p_{f_1}^*(\alpha \wedge \tau \beta) \wedge \omega_1 &= \int_{f^*(TN)_1} (f,id_{B^k})^* \circ p_{id}^*(\alpha \wedge \tau \beta) \wedge \omega_1 \\
		&= \int_{TN} p_{id}^*(\alpha \wedge \tau \beta) \wedge \omega_1 \\
		&= \int_N (\int_F \omega_1) \alpha \wedge \tau \beta \\
		&= \int_N \alpha \wedge \tau \beta \\
		&= \langle 1(\alpha), \beta \rangle,
		\end{split}
		\end{equation}
	where $F$ is the fiber of $p_{id}$.
		\\Consider the second integral: let us denote by $pr_1: \mathcal{B} \longrightarrow f^*TN_2$ the projection $(v_{f_1(p)}, v_{f_2(p)}) \rightarrow v_{f_2(p)}$. We have that
		\begin{equation}
		\begin{split}
		&\int_{\mathcal{B}} (dy_0 + y_0 d) \alpha \wedge p_{f,2}^*\tau \beta \wedge \omega_1 \wedge \omega_2\\
		&= (-1)^{j(n-j)} \int_{\mathcal{B}} p_{f,2}^*\tau \beta \wedge (dy_0 + y_0 d)\alpha \wedge \omega_1 \wedge \omega_2 \\
		&= (-1)^{j(n-j)} \int_N \tau \beta \wedge \{ \int_{F_2}[\int_{B^\delta_1}(dy_0 + y_0 d)\alpha \wedge \omega_1 ]\wedge \omega_2 \} \\
		&= \langle (d\circ Y \alpha + Y \circ d) \alpha, \beta \rangle
		\end{split}
		\end{equation}
		where
		\begin{equation}
		\begin{split}
		Y :&=  p_{f,2\star} \circ e_{\omega_2} \circ pr_{1,\star} \circ e_{\omega_1} \circ y_0 \\
		&=  p_{f,2\star} \circ e_{\omega_2} \circ pr_{1,\star} \circ e_{\omega_1} \circ \int_{0, \mathcal{L}}^1 \circ H^*.
		\end{split}
		\end{equation}
Let us assume that $\alpha$ is a $j$-form and $\beta$ is a $r$-form with $j \neq r$, we have that
		\begin{equation}
		\langle T_f^\dagger T_f \alpha, \beta \rangle = \langle (-1 + dY + Yd)(\alpha), \beta \rangle = 0.
		\end{equation}
		Indeed $[-1 + dY + Yd]\alpha$ is a $j$-form if $\alpha$ is a $j$-form.
\\Since proposition 3.2 of \cite{Spes}, we have that $pr_{1,\star}$ is a $\mathcal{L}^2$-bounded operator. Moreover we also have that $p_{f,2\star}$, $e_{\omega_2}$, $e_{\omega_1}$ are $\mathcal{L}^2$-bounded. Then, using Lemma \ref{yformula}, we conclude that $Y$ is a $\mathcal{L}^2$-bounded operator.
		\\Consider $y := \frac{Y + Y^\dagger}{2}$. Then we have that $y$ is an $\mathcal{L}^2$-bounded operator such that $y^\dagger = y$. Observe that since $Y$, on $\Omega^*(N) \cap \mathcal{L}^2(N)$ satisfies the equality (\ref{smo}), then also $y$ satisfies it.
		We still have to prove that $y(dom(d_{min})) \subset dom(d_{min})$: using this fact the proof of the equality (\ref{smo}) for every $\alpha$ in $dom(d_{min})$ will be immediate. Since Proposition 2.2. of \cite{Spes}, in order to to prove $y(dom(d_{min})) \subset dom(d_{min})$, it is sufficient to prove that $y(\Omega_c^*(N)) \subseteq \Omega_c^*(N)$.
		Observe that if $\beta$ is in $\Omega^*_c(N)$ the $Y\beta$ is in $\Omega^*_c(N)$ since $f$ is a proper map. Moreover we also have that $Y^\dagger \beta$ is in $\Omega^*_c(N)$, indeed we have that
		\begin{equation}
		Y^\dagger \beta = (-1)^{n} H_\star \circ pr_{\mathcal{B}}^* \circ e_{\omega_1} \circ pr_{2}^* \circ e_{\omega_2} \circ p_{f,2}^* \beta.
		\end{equation}
		Then we obtain that $y(\Omega_c^*(N)) \subseteq \Omega_c^*(N)$ and this concludes the proof.
	\end{proof}
	\begin{lem}
		Let $(M,g)$ be an orientable Riemannian manifold and let $\Gamma$ be a group of isometries which preserve the orientation. Let $A$ un operator on $\mathcal{L}^2(M)$. If $A$ commute with $\Gamma$, then
		\begin{equation}
		A^* \Gamma = \Gamma A^* \mbox{         and            } A^\dagger \Gamma = \Gamma A^\dagger.
		\end{equation}
	\end{lem}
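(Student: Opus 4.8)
The plan is to reduce both identities to the single observation that the operators implementing the $\Gamma$-action on $\mathcal{L}^2(M)$ are unitaries commuting with the chirality operator, and then simply to pass to Hilbert-space adjoints. First I would recall that, since every $\gamma \in \Gamma$ is an isometry, the pullback $\gamma^*$ preserves the pointwise metric on forms and the Riemannian volume density, hence it is a unitary operator of $\mathcal{L}^2(M)$ with $(\gamma^*)^* = (\gamma^*)^{-1} = (\gamma^{-1})^*$. The hypothesis that $A$ commutes with $\Gamma$ means $A\gamma^* = \gamma^* A$ for every $\gamma \in \Gamma$ (and, when $A$ is unbounded, this is read as an equality of operators, so that $\gamma^*$ preserves $dom(A)$ as well).

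Next I would take adjoints of $A\gamma^* = \gamma^* A$. Using that for a unitary $U$ one has $(AU)^* = U^*A^*$ and $(UA)^* = A^*U^*$ (with the corresponding domains in the unbounded case), the relation dualizes to $(\gamma^*)^* A^* = A^* (\gamma^*)^*$, i.e. $(\gamma^{-1})^* A^* = A^*(\gamma^{-1})^*$. Since $\gamma \mapsto \gamma^{-1}$ is a bijection of $\Gamma$, this is exactly $\gamma^* A^* = A^* \gamma^*$ for all $\gamma \in \Gamma$, which is the first claim $A^*\Gamma = \Gamma A^*$.

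For the statement about $A^\dagger$, I would invoke the defining formula $A^\dagger = \tau\circ A^*\circ\tau$, where $\tau$ is the chirality operator of $M$. The key input is that $\Gamma$ preserves the orientation: an orientation-preserving isometry commutes with the Hodge star, $\gamma^*\circ\star = \star\circ\gamma^*$, and since $\tau$ is a degree-preserving constant multiple of $\star$, it follows that $\gamma^*\tau = \tau\gamma^*$. Combining this with the previous step gives, for every $\gamma\in\Gamma$,
\[
A^\dagger \gamma^* = \tau A^* \tau \gamma^* = \tau A^* \gamma^* \tau = \tau \gamma^* A^* \tau = \gamma^* \tau A^* \tau = \gamma^* A^\dagger,
\]
which is $A^\dagger\Gamma = \Gamma A^\dagger$.

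The only point requiring any care — the ``main obstacle,'' such as it is — is the commutation $\gamma^*\tau = \tau\gamma^*$, which is precisely where the orientation-preserving hypothesis is used (an orientation-reversing isometry would conjugate $\star$ into $-\star$); and, if $A$ is allowed to be unbounded, making sure that the adjoint identities for products with a unitary are applied with the correct domains, so that $A\gamma^* = \gamma^* A$ genuinely dualizes to $\gamma^* A^* = A^* \gamma^*$. Beyond these bookkeeping remarks the argument is routine.
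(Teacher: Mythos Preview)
Your argument is correct and is exactly the natural one: $\gamma^*$ is unitary because $\gamma$ is an isometry, so taking Hilbert-space adjoints of $A\gamma^*=\gamma^*A$ gives $A^*\Gamma=\Gamma A^*$; and orientation-preserving isometries commute with $\star$ and hence with $\tau$, which yields $A^\dagger\Gamma=\Gamma A^\dagger$ via $A^\dagger=\tau A^*\tau$. The paper in fact states this lemma without proof, treating it as routine, so your write-up simply supplies the expected verification.
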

	\begin{cor}
		If there exists a group $\Gamma$ of isometries which commute with $f$, then also $y$ commute with $\gamma$ for all $\gamma$ in $\Gamma$.
	\end{cor}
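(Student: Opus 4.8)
The plan is to unwind the definition of $y$ from the previous Proposition and to propagate $\Gamma$-equivariance through each factor of it. Recall that $y = \frac{1}{2}(Y + Y^\dagger)$, where
\[
Y = p_{f,2\star} \circ e_{\omega_2} \circ pr_{1,\star} \circ e_{\omega_1} \circ \int_{0,\mathcal{L}}^1 \circ H^*.
\]
Hence it is enough to show that $Y$ commutes with every $\gamma$ in $\Gamma$: the preceding Lemma, applied to $(N,h)$ on which $\Gamma$ acts by orientation-preserving isometries, then yields $\gamma Y^\dagger = Y^\dagger \gamma$ as well, and therefore $\gamma y = y \gamma$.

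First I would upgrade the hypothesis $\gamma \circ f = f \circ \gamma$ (with $\gamma$ acting by isometries on both $M$ and $N$) to equivariance of all the auxiliary data entering $Y$. Since $\Gamma$ preserves $h$ and $\nabla^{LC}_h$, it also preserves $f^*h$ and $f^*\nabla^{LC}_h$, so the $\Gamma$-action on $M$ lifts canonically to an action on $f^*(T^\delta N)$ --- and hence on $\mathcal{B} = f^*(T^\delta N)_1 \oplus f^*(T^\delta N)_2$ --- by bundle maps covering the action on $M$ and isometric for the Sasaki metric. Consequently the submersion $p_f$, the projections $pr_i$, the maps $p_{f,i}$ and the homotopy $H$ (built from the linear map $A$ and $p_f$) are all $\Gamma$-equivariant, and the Thom forms $\omega_i$ can be taken $\Gamma$-invariant; this is exactly the $\Gamma$-equivariant version of the construction underlying $T_f$ carried out in \cite{Spes}, and it is here that the FUPD hypothesis enters, since it guarantees that invariant Thom data can be chosen without destroying the uniform bounds. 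Granted all this, the pullback $H^*$ along the equivariant map $H$ intertwines the operators $\gamma^*$; wedging with a $\Gamma$-invariant form ($e_{\omega_1}$, $e_{\omega_2}$) commutes with $\gamma^*$; and integration along the fibers of an equivariant submersion ($pr_{1,\star}$, $p_{f,2\star}$), as well as the fiber integration $\int_{0,\mathcal{L}}^1$ over the trivially-acted factor $[0,1]$, commutes with $\gamma^*$ because $\Gamma$ preserves the fiber orientations. Composing these intertwining relations gives $\gamma Y = Y \gamma$ for every $\gamma$ in $\Gamma$.

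The main obstacle is the $\Gamma$-equivariant choice of the Thom forms $\omega_i$ together with the verification that the fiber-integration operators respect the $\Gamma$-action: once orientation-preservation is assumed the latter is formal, and the former follows either from the naturality of the Thom class relative to the $\Gamma$-invariant connection or from pushing the construction down to the bounded-geometry quotient bundle and pulling back an invariant representative. Everything else --- the intertwining identities for pullback, wedge product and fiber integration, and the passage from $Y$ to $Y^\dagger$ through the preceding Lemma --- is a routine transport of equivariance along the composition defining $Y$.
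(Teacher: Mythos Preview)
Your proposal is correct and follows exactly the approach the paper has in mind: the Corollary is stated without proof, but its placement immediately after the Lemma on $A^*$ and $A^\dagger$ makes clear that one is meant to check $\Gamma$-equivariance of $Y$ factor by factor (equivariance of $H$, $p_{f,i}$, $pr_i$ and $\Gamma$-invariance of the $\omega_i$, all inherited from the equivariant construction in \cite{Spes}) and then invoke that Lemma to pass to $Y^\dagger$ and hence to $y$. Your identification of the one nontrivial point --- the $\Gamma$-invariant choice of Thom forms under the FUPD hypothesis --- is also the right place to put the emphasis.
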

	\subsection{$T_f$ as smoothing operator}
	\begin{prop}\label{Tsmooth}
		Consider $f:(M,g) \longrightarrow (N,h)$ a smooth and lipschitz uniform homotopy equivalence between manifolds of bounded geometry. Then the operator $T_f$ is a smoothing operator.
	\end{prop}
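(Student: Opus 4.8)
The plan is to realise $T_f$ directly as an integral operator with smooth kernel, by transporting the fibrewise integration that defines it onto the product $M\times N$. Recall that for a smooth $\mathcal{L}^2$-form $\alpha$ on $N$ one has $T_f\alpha=\pi_\star(p_f^*\alpha\wedge\omega)$, where $\pi:f^*(T^\delta N)\longrightarrow M$ is the bundle projection, $\pi_\star$ denotes integration along the $\delta$-disc fibres, and $\omega$ is a fixed Thom form of $f^*TN$; since only its class enters the construction I take $\omega$ supported in the sub-disc bundle $\{\,|v|\le\delta'\,\}$ for some fixed $\delta'<\delta\le inj(N)$ (shrinking $\delta$ if necessary, we may assume $\delta<inj(N)$). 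The point is that, by the construction of $p_f$ in Lemma 3.3 of \cite{Spes}, the restriction of $p_f$ to the fibre of $f^*(T^\delta N)$ over a point $p\in M$ — canonically the ball $B^\delta_{f(p)}\subset T_{f(p)}N$ — is the exponential map $\exp^N_{f(p)}$, which is a diffeomorphism onto the metric ball $B_\delta(f(p))\subset N$. Hence the map
\begin{equation}
\Phi:f^*(T^\delta N)\longrightarrow M\times N,\qquad \Phi(v_p):=\big(p,\,p_f(v_p)\big),
\end{equation}
is a diffeomorphism onto the open tube $\mathcal{N}_\delta:=\{(p,q)\in M\times N:d_N(f(p),q)<\delta\}$ about the graph of $f$, and satisfies $pr_M\circ\Phi=\pi$ and $pr_N\circ\Phi=p_f$.

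Next I would push $\omega$ over: set $\widetilde\omega:=(\Phi^{-1})^*\omega$, a smooth $n$-form on $\mathcal{N}_\delta$. Because $supp(\omega)$ lies in the $\delta'$-disc bundle with $\delta'<\delta$, while $(p,q)\mapsto d_N(f(p),q)$ is continuous, $\widetilde\omega$ is supported in the closed subset $\{(p,q):d_N(f(p),q)\le\delta'\}$ of $\mathcal{N}_\delta$ and extends by zero to a smooth $n$-form on all of $M\times N$, meeting each slice $\{p\}\times N$ in the bounded set $\overline{B_{\delta'}(f(p))}$. Since $\Phi$ covers $id_M$ and $\Phi^*pr_N^*\alpha=p_f^*\alpha$, the change-of-variables formula for integration along the fibre under the fibrewise diffeomorphism $\Phi$ yields
\begin{equation}
T_f\alpha\;=\;\pi_\star\big(p_f^*\alpha\wedge\omega\big)\;=\;(pr_M)_\star\big(pr_N^*\alpha\wedge\widetilde\omega\big),
\end{equation}
where $(pr_M)_\star$ is integration along the $N$-fibres of $pr_M:M\times N\longrightarrow M$, well defined on $pr_N^*\alpha\wedge\widetilde\omega$ since $\widetilde\omega$ is fibrewise compactly supported. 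It then remains to note that any operator $\alpha\mapsto(pr_M)_\star(pr_N^*\alpha\wedge\Omega)$, with $\Omega$ a smooth fibrewise-compactly-supported $n$-form on $M\times N$, is a smoothing operator: in charts $\{U,x^s\}$ on $M$ and $\{V,y^l\}$ on $N$, writing $\Omega=\sum W_{AL}(x,y)\,dx^A\wedge dy^L$ with $|A|+|L|=n$ and $\alpha=\sum_{|I|=j}\alpha_I(y)\,dy^I$, the fibre integral kills all terms except those with $|L|=n-j$, whence $|A|=j$ and $I\sqcup L=\{1,\dots,n\}$, and one reads off
\begin{equation}
(T_f\alpha)(x)\;=\;\int_N K(x,q)\,\alpha(q)\,d\mu_N(q),
\end{equation}
with $K$ the section of $\Lambda^*(M)\boxtimes\Lambda(N)$ obtained from $\Omega$ by dualising its $dy$-part (pairing $dy^L$ with the complementary basis covector $\partial/\partial y^I$, up to sign) and dividing out the volume density $\sqrt{\det h}$ of $N$ so that the integral is taken against $d\mu_N$. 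Taking $\Omega=\widetilde\omega$, this $K$ is smooth because $\widetilde\omega$ and $h$ are smooth, so $T_f$ is a smoothing operator; moreover, since $supp(\widetilde\omega)$ lies in the $\delta'$-tube and $N$ has bounded geometry, $K$ has uniformly bounded support, which will be useful later through Proposition \ref{smoothbound}.

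The real content is the geometric identification: that $\Phi$ is a diffeomorphism of $f^*(T^\delta N)$ onto the open tube $\mathcal{N}_\delta$, and hence the second display above. This rests on three points: that $\delta\le inj(N)$, so that $\exp^N_{f(p)}$ is a diffeomorphism on the $\delta$-ball and $\Phi$ — injective, with invertible differential whose vertical part is $d\exp^N_{f(p)}$ — is an open embedding onto $\mathcal{N}_\delta$; that $\omega$ is supported at radius $\delta'<\delta$, so that $\widetilde\omega$ extends smoothly by zero off its support; and the change-of-variables formula for integration along the fibre under a fibrewise diffeomorphism covering $id_M$. Once these are in place, the remainder is the routine bookkeeping turning an $n$-form on $M\times N$, wedged against $pr_N^*\alpha$ and integrated over the $N$-fibre, into an integral kernel, with the smoothness and the uniformly bounded support of $K$ inherited directly from those of $\widetilde\omega$. (Only the smoothness of $f$ is actually used here; the remaining hypotheses on $f$ enter \cite{Spes} to ensure that $T_f$ is $\mathcal{L}^2$-bounded.)
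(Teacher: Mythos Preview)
Your argument is correct and follows essentially the same route as the paper: the map you call $\Phi$ is precisely the paper's $t_f=(\pi,p_f)$, and both proofs push $\omega$ forward through this diffeomorphism to rewrite $T_f\alpha$ as $(pr_M)_\star(pr_N^*\alpha\wedge\widetilde\omega)$ and then read off the smooth kernel in local coordinates. The only cosmetic differences are that the paper cites Lemma~4.1 of \cite{Spes} for the diffeomorphism property rather than arguing via the fibrewise exponential, and packages the ``dualise the $dy$-part'' step using the Hodge star $\star_N$; your extra remark on uniformly bounded support is established separately in the paper (Proposition~\ref{dT_fboun}).
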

	\begin{proof}
		We divide the proof in two steps: in the first one we will show that for all smooth forms $\alpha$ in $\mathcal{L}^2(N)$ we have that
		\begin{equation}
		T_f\alpha(p) = \int_N K(p,q) \alpha(q) d\mu_N
		\end{equation}
		with $K(p,q) \in \Lambda^*(M) \boxtimes \Lambda(N)_{(p,q)}$. In the second one we will prove that the section $K$ is smooth.
		\\
		\\ \textbf{1.} The proof of the first step is similar to the proof given by Vito Zenobi in his Ph.D. thesis \cite{vito}.
		\\Let us denote the following maps as follows
		\begin{itemize}
			\item $p_f:f^*T^\delta N \longrightarrow N$ the submersion related to $f$,
			\item $t_f:f^*T^\delta N \longrightarrow M\times N$ is the map $t:=(\pi, p_f)$,
			\item $\pi:f^*TN \longrightarrow M$ is the projection of the bundle $f^*TN$,
			\item $pr_M$ and $pr_N$ the projections of $M\times N$ over $M$ and $N$.
		\end{itemize}
	   We proved, in Lemma 4.1 of \cite{Spes},  that $t_f$ is a diffeomorphism on its image, so, in particular, it is a submersion and the integration along the fibers of $t_f$ is the pullback along $t_f^{-1}: im(t_f) \longrightarrow f^*T^\delta N$.
	   \\Observe that $p_f = pr_N \circ t_f$ and $\pi = pr_M \circ t_f.$
	   Consider a smooth form $\alpha$ in $\mathcal{L}^2(N)$. Applying the Projection Formula, we obtain that
	   \begin{equation}
	   	\begin{split}
	   	T_f \alpha &= \pi_\star \circ e_\omega \circ p_f^* \alpha \\
	   	&= pr_{M, \star} \circ t_{f, \star} \circ e_\omega \circ t_f^* \circ pr_N^* \alpha \\
	   	&= pr_{M, \star} \circ e_{t_{f, \star} \omega} \circ pr_N^* \alpha,
	   	\end{split}
	   \end{equation}
	   where $t_{f, \star} \omega$ is the form $t_f^{-1}\omega$ on $im(t_f) \subset M \times N$ and $0$ in $im(t_f)^c$. Since $supp(\omega)$ is contained in $f^*T^{\delta_1}N$ where $\delta_1 < \delta$, then $t_{f, \star} \omega$ is a well-defined, smooth form.
	   \\Observe that the Hodge star operator $\star_N$ on $N$ induces a bundle endomorphism on $\Lambda^*(M \times N)$ imposing that, for each couple of orthonormal frames $\{W^i\}$ of $\Lambda^*M$ and $\{\epsilon^j\}$ of $\Lambda^*N$, we have
   \begin{equation}
   	\star_N (pr_M^* W^I \wedge pr_N^* \epsilon ^J (p,q)) := pr_M^* W^I \wedge (\star_N \epsilon ^J) (p,q)
   	\end{equation}
   Consider $\Lambda^{\star, k}(M \times N)$ the subbundle of $\Lambda^*(M \times N)$ given by the forms which are $k$-forms with respect to $N$. Observe that
   \begin{equation}
   	\Lambda^*(M \times N) = \bigoplus\limits_{k \in \mathbb{N}}\Lambda^{\star, k}(M \times N).
   \end{equation}
   Let us denote by $n$ the dimension of $N$. We can define the bundle morphism
	   \begin{equation}
	   B: \Lambda^*(M \times N) \longrightarrow \Lambda^{\star, 0}(M \times N) = pr_M^*(\Lambda^* M)
	   \end{equation}
   which is 
   \begin{equation}
   	B\alpha_{(p,q)} := \star_N \alpha_{(p,q)} 
   \end{equation}
   if $\alpha_{(p,q)} \in \Lambda^{\star, n}(M \times N)_{(p,q)}$ and it is zero if $\alpha_{(p,q)}$ is in $\Lambda^{\star, q}(M \times N)_{(p,q)}$ where $q \neq n$.
   \\Then we have that for each
   \begin{equation}\label{piru}
   	\begin{split}
   	T_f \alpha(p) &= pr_{M, \star} \circ e_{t_{f, \star} \omega} \circ pr_N^* \alpha(p) \\
   &= \int_N B \circ e_{t_{f, \star} \omega} (\alpha)(p,q) d\mu_N
   	\end{split}
   \end{equation}
Let us define, for each $(p,q)$ in $M \times N$ the operators
\begin{equation}
	\begin{split}
E_{\omega, p,q}: pr_N^*(\Lambda^*N)_{(p,q)} &\longrightarrow  \Lambda^*(M \times N)_{(p,q)} \\
\beta_{(p,q)} &\longrightarrow \beta_{(p,q)} \wedge t_{f, \star} \omega(p,q)
	\end{split}
\end{equation}
and
\begin{equation}
	\begin{split}
		B_{p,q}: \Lambda^*(M \times N)_{(p,q)} &\longrightarrow pr_M^*(\Lambda^*M)_{(p,q)}  \\
		\gamma_{(p,q)} &\longrightarrow B\gamma_{(p,q)}.
	\end{split}
\end{equation}
Then the equation (\ref{piru}) can be read as
\begin{equation}
T_f \alpha(p) = \int_N (B_{p,q} \circ E_{\omega, p,q})pr_N^*\alpha(q) d\mu_N =
\int_N K(p,q)\alpha(q) d\mu_N,
\end{equation}	   
where
\begin{equation}
K(p,q) := B_{p,q} \circ E_{\omega, p,q}:pr_N^*(\Lambda^*N)_{(p,q)} \longrightarrow pr_M^*(\Lambda^*M)_{(p,q)}
\end{equation}	   	   
is, for each $(p,q)$ in $M \times N$ an element of $\Lambda^*M \boxtimes \Lambda TN_{(p,q)}$.	   
\\
		\\\textbf{2.} 
		In order to prove that $K$ is a smooth section of $\Lambda^*M \boxtimes \Lambda N$ it is sufficient to show that, in local coordinates $\{x^i, y^j\}$, the kernel $K$ has the form
		\begin{equation}
		K(x,y) = K^J_I(x,y)dx^I \boxtimes \frac{\partial}{\partial y^J}
		\end{equation}
	where $K^J_I$ are smooth functions.
	\\Let us fix some normal coordinates $\{x,y\}$ on $M \times N$. We have that
		\begin{equation}
		t_{f, \star} \omega (x,y) = \beta_{IR}(x,y)dx^I \wedge dy^R
		\end{equation}
		for some smooth functions $\beta_{IR}$ and that $\alpha$ has the form 
		\begin{equation}
		\alpha(y) = \alpha_{J}(y)dy^J.
		\end{equation}
	 Let us denote by $\frac{\partial}{\partial x^I} = \frac{\partial}{\partial x^{i_1}} \wedge ... \wedge \frac{\partial}{\partial x^{i_k}}$, the dual of $dx^I$.
		Let $J^c$ be the multindex such that, up to double switches, $(J, J^c)$ is the index $(1, ..., n)$. We have that
		\begin{equation}
		\begin{split}
		K^J_I(x,y) &= K(\frac{\partial}{\partial x^I}, dy^J) \\
		&= [B_{x,y}(\beta_{LR}(x,y)dx^L \wedge dy^R \wedge dy^J)](\frac{\partial}{\partial x^I})\\
		&= [\beta_{LJ^c}(x,y)dx^L](\frac{\partial}{\partial x^I})\\
		&= \beta_{IJ^c}(x,y).
		\end{split}
		\end{equation}
	    This fact implies that if $K$ in local coordinates has the form 
	    \begin{equation}
	    K(x,y) = \beta_{IJ^c}(x,y) dx^I \boxtimes \frac{\partial}{\partial y^J}
	    \end{equation}
	    and so the kernel $K$ is smooth.
	\end{proof}
	\subsection{Some operators in $C^*_f(M \sqcup N)^\Gamma$}
	In this section we consider operators between $\mathcal{L}^2(M)$ and $\mathcal{L}^2(N)$ as operators in $\mathcal{L}^2(M \sqcup N)$ extended by zero outside their original domains.
	\begin{prop}
		The operator $T_f$ is in $C_f^*(M \sqcup N)^\Gamma$.
	\end{prop}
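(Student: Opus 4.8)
The plan is to verify the three defining properties of $C^*_f(M \sqcup N)^\Gamma$ for the operator $T_f$ (viewed as an operator on $\mathcal{L}^2(M \sqcup N)$ by extension by zero): finite propagation with respect to the coarse structure of Example \ref{coarseexem}, pseudo-locality, and local compactness, together with $\Gamma$-equivariance. Equivariance is immediate: $T_f$ commutes with $\Gamma$ because $f$ is $\Gamma$-equivariant and the construction of $T_f$ (via $p_f^*$, $e_\omega$ and integration along fibers) is natural, as already used for the operator $y$ in the preceding corollary.

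First I would check that $T_f$ is controlled. By Proposition \ref{Tsmooth}, $T_f$ is a smoothing operator with kernel $K(p,q) = B_{p,q} \circ E_{\omega,p,q}$, and since $\mathrm{supp}(\omega) \subset f^*T^{\delta_1}N$ with $\delta_1 < \delta$, the support of $K$ is contained in the image of $t_f = (\pi, p_f)$; concretely $K(p,q) \neq 0$ forces $d_N(f(p), q) \leq \delta_1$. Now given $\phi, \psi \in C_0(M \sqcup N)$ with $d_N((f \sqcup id_N)(\mathrm{supp}\,\phi), (f \sqcup id_N)(\mathrm{supp}\,\psi))$ large, I would argue that $\phi T_f \psi = 0$: writing $\phi = \phi_M + \phi_N$ and $\psi = \psi_M + \psi_N$ according to the splitting, only the block sending forms supported near $\mathrm{supp}\,\psi_N \subseteq N$ to forms supported near $\mathrm{supp}\,\phi_M \subseteq M$ survives, and for that block the kernel vanishes unless $d_N(f(p), q) \leq \delta_1$ for $p \in \mathrm{supp}\,\phi_M$, $q \in \mathrm{supp}\,\psi_N$, i.e. unless $d_N((f\sqcup id_N)(p), (f\sqcup id_N)(q)) \leq \delta_1$. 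Hence $C = \delta_1$ works and $T_f$ has finite propagation.

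Next, local compactness: I must show $\rho(\phi) T_f$ and $T_f \rho(\phi)$ are compact for every $\phi \in C_0(M \sqcup N)$. The idea is that $\rho(\phi) T_f$ is again a smoothing operator whose kernel is $\phi(p) K(p,q)$; this has compact support in $M \times N$ (the $p$-support lies in the compact set $\mathrm{supp}\,\phi$, and the $q$-support is then confined to an $\delta_1$-neighborhood of $f(\mathrm{supp}\,\phi)$, which has compact closure since $N$ has bounded geometry and $f$ is uniform, hence proper in the relevant sense) and is bounded (by Proposition \ref{smoothbound}, or directly since $K$ is smooth with uniformly bounded fiberwise norm). By Remark \ref{suppa}, a smoothing operator with compactly supported kernel is compact. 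The same argument applies to $T_f \rho(\phi)$ using the right-sided support bound $d_N(f(p),q) \leq \delta_1$. Finally, pseudo-locality ($[\rho(\phi), T_f]$ compact) follows formally: $[\rho(\phi), T_f] = \rho(\phi) T_f - T_f \rho(\phi)$ is a difference of two compact operators, hence compact. Together these show $T_f \in C^*_{f,c}(M\sqcup N)^\Gamma \subseteq C^*_f(M \sqcup N)^\Gamma$.

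The main obstacle is the local-compactness step — specifically, confirming that the kernel of $\rho(\phi) T_f$ genuinely has compact support, which requires that the $q$-fiber $\{q : d_N(f(p), q) \leq \delta_1 \text{ for some } p \in \mathrm{supp}\,\phi\}$ is precompact. This uses that $f$ is uniformly proper (so the image $f(\mathrm{supp}\,\phi)$ of a compact set is bounded, hence precompact by completeness and bounded geometry of $N$) and then that a bounded neighborhood in a complete manifold of bounded geometry has compact closure. I would also need to take a little care that $K$, though smooth, is bounded on this compact region so that the compactly-supported-kernel criterion of Remark \ref{suppa} applies cleanly; this is where Proposition \ref{smoothbound} (or the uniform boundedness of $\omega$ and its pushforward $t_{f,\star}\omega$) does the work.
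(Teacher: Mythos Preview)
Your proof is correct and follows the paper's approach: finite propagation is extracted from the support of the smoothing kernel (the paper phrases the bound via the Lipschitz constant $C_{p_f}$ of $p_f$ rather than $\delta_1$, but either works), local compactness comes from Remark~\ref{suppa} once multiplication by a compactly supported $\phi$ yields a compactly supported smooth kernel, and $\Gamma$-equivariance is inherited from the construction. One small correction to your ``main obstacle'' paragraph: uniform properness of $f$ is needed for the $T_f\rho(\phi)$ side (to show the $p$-support $f^{-1}(B_{\delta_1}(\mathrm{supp}\,\phi))$ is bounded), whereas for $\rho(\phi)T_f$ mere continuity of $f$ already makes $f(\mathrm{supp}\,\phi)$ compact.
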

	\begin{proof}
		Observe that
		\begin{equation}
		supp(T_f) \subseteq \{(x,x') \in M \times N \vert B^\delta_x \cap p_f^{-1}(x') \neq \emptyset \}
		\end{equation}
	where $B^\delta_x$ is the fiber of $\pi: f^*TN \longrightarrow M$.
		\\So, if $\psi$ and $\phi$ are two compactly supported functions on $N$ and $M$ respectively we have that
		\begin{equation}
		supp(\psi T_f \phi) \subseteq \pi^{-1}(supp(\psi)) \cap p_f^{-1}(supp(\phi)).
		\end{equation}
		And so
		\begin{equation}
		d(\pi^{-1}(supp(\psi)), p_f^{-1}(supp(\phi))) > 0 \implies \psi T_f \phi = 0.
		\end{equation}
		\\
		Now, if $t \in p_f(\pi^{-1}(supp(\psi)))$ then $t = p_f(s,q)$ for some $(s,q)$. Then we have
		\begin{equation}
		d(t, f(supp(\psi))) \leq d(p_f(s,q), p_f(s, 0)) \leq C_{p_{f}}d((s,q), (s, 0)) \leq C_{p_{f}}.
		\end{equation} 
		Let $R > C_{p_{f}}$. We have that
		\begin{equation}
		\begin{split}
		&d(f(supp(\psi)), supp(\phi)) > R \implies d(p_f(\pi^{-1}(supp(\psi))), supp(\phi)) > 0 \implies \\
		&d(\pi^{-1}(supp(\psi)), p_f^{-1}supp(\phi)) > 0 \implies \psi T_f \phi = 0.
		\end{split}
		\end{equation}
		It means that $T_f$ has propagation less or equal to $C_{p_{f}}$. Moreover if $\phi$ has compact support then $T_f\phi$ and $\phi T_f$ are integral operator with smooth, compactly supported kernel. Then, in particular they are compact.
		\\Since the $\Gamma$-invariance of $T_f$ it follows that $T_f$ is in $C_f^*(M\sqcup N)^\Gamma$.
	\end{proof}
	\begin{cor}
		Since $T_f$ is in $C_f^*(M\sqcup N)^\Gamma$, then also $T_f^\dagger$, $\tau T_f\tau$ and $\tau T_f^{\dagger}\tau$ are in $C_f^*(M\sqcup N)^\Gamma$.
	\end{cor}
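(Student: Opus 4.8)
The plan is to derive all three assertions from two structural facts already recorded above: that $C_f^*(M\sqcup N)^\Gamma$ is a $C^*$-algebra, hence closed under adjoints, and that it is a two-sided closed ideal of the structure algebra $D_f^*(M\sqcup N)^\Gamma$; together with the identity $T_f^\dagger=\tau T_f^*\tau$ coming from the definition $A^\dagger=\tau_X\circ A^*\circ\tau_Y$ of the $\dagger$-adjoint (here $\tau=\tau_{M\sqcup N}$ restricts to $\tau_M$ and $\tau_N$ on the two summands).

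First I would observe that $T_f^*$ again lies in $C_f^*(M\sqcup N)^\Gamma$. The properties of being controlled, pseudo-local and locally compact are all stable under $T\mapsto T^*$: the support of $T^*$ is the image of $supp(T)$ under the flip of $(M\sqcup N)\times(M\sqcup N)$, hence still controlled; $[\rho(\phi),T^*]$ is (minus) the adjoint of $[\rho(\bar\phi),T]$, hence compact whenever $T$ is pseudo-local; and local compactness and $\Gamma$-equivariance pass to the adjoint in the same way. So $T_f\in C_f^*(M\sqcup N)^\Gamma$ forces $T_f^*\in C_f^*(M\sqcup N)^\Gamma$.

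The only step that calls for a short verification is that the chirality operator $\tau$ on $\mathcal{L}^2(M\sqcup N)$ belongs to the structure algebra $D_f^*(M\sqcup N)^\Gamma$. It is a bounded, fiberwise operator — $(\tau\alpha)(p)$ depends only on $\alpha(p)$ — so it commutes with every multiplication operator $\rho(\phi)$; this makes $\tau$ pseudo-local (the commutator vanishes) and forces $supp(\tau)$ into the diagonal, which is a controlled set for any coarse structure, so $\tau$ has finite propagation. Finally $\tau$ commutes with the $\Gamma$-action because the elements of $\Gamma$ are orientation-preserving isometries and therefore commute with the Hodge star operator. Since $C_f^*(M\sqcup N)^\Gamma$ is an ideal of $D_f^*(M\sqcup N)^\Gamma$, the conjugation $A\mapsto \tau A\tau$ maps $C_f^*(M\sqcup N)^\Gamma$ into itself.

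Assembling the pieces then finishes the proof: conjugating $T_f$ by $\tau$ gives $\tau T_f\tau\in C_f^*(M\sqcup N)^\Gamma$; combining $T_f^\dagger=\tau T_f^*\tau$ with $T_f^*\in C_f^*(M\sqcup N)^\Gamma$ and the same conjugation gives $T_f^\dagger\in C_f^*(M\sqcup N)^\Gamma$; and conjugating $T_f^\dagger$ by $\tau$ once more gives $\tau T_f^\dagger\tau\in C_f^*(M\sqcup N)^\Gamma$. The main — and essentially the only — obstacle is the bookkeeping of the third paragraph, namely recognising $\tau$ inside the structure algebra for the coarse structure attached to $f$; because $\tau$ has propagation zero and commutes with the multiplication representation, even this is routine.
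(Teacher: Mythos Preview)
Your proposal is correct and follows exactly the same route as the paper's proof, which consists of the single sentence ``It is sufficient to observe that $\tau$ is in $D_f^*(M \sqcup N)^\Gamma$ and that, since $C_f^*(M\sqcup N)^\Gamma$ is a $C^*$-algebra, if $T_f$ is in $C_f^*(M\sqcup N)^\Gamma$ then $T_f^*$ is in $C_f^*(M\sqcup N)^\Gamma$.'' You have simply supplied the details the paper leaves implicit: why $\tau$ lies in the structure algebra (propagation zero, commutes with the multiplication representation, $\Gamma$-equivariance from orientation-preserving isometries) and how the ideal property and the identity $T_f^\dagger=\tau T_f^*\tau$ combine to give all three conclusions.
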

	\begin{proof}
		It is sufficient to observe that $\tau$ is in $D_f^*(M \sqcup N)^\Gamma$ and that, since $C_f^*(M\sqcup N)^\Gamma$ is a $C^*$-algebra, if $T_f$ is in $C_f^*(M\sqcup N)^\Gamma$ then $T_f^*$ is in $C_f^*(M\sqcup N)^\Gamma$.
	\end{proof}
\begin{prop}
	The operator $y$ is a $\Gamma$-equivariant operator with finite propagation. Moreover the operators $T_fy$, $yT_f$, $T_f^\dagger y$ $yT_f^\dagger$ are all operators in $C_f^*(M \sqcup N)^\Gamma$. 
\end{prop}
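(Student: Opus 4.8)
The plan is to reduce both assertions to the single statement that $y$ is a smoothing operator with uniformly bounded support, and then to invoke the ideal property of $C^*_f(M \sqcup N)^\Gamma$. The $\Gamma$-equivariance of $y$ is already contained in the Corollary above (under the standing hypothesis that $\Gamma$ commutes with $f$), and the $\mathcal{L}^2$-boundedness of $y$ was established when $y$ was constructed; so the real work is the smoothing structure and the uniform bound on the support.

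For this I would analyse the operator $Y = p_{f,2\star} \circ e_{\omega_2} \circ pr_{1,\star} \circ e_{\omega_1} \circ \int_{0,\mathcal{L}}^1 \circ H^*$, recalling that $y = \frac12(Y + Y^\dagger)$. Arguing exactly as in the proof of Proposition \ref{Tsmooth}, I would use the projection formula to push the pullbacks past the fibre integrations and rewrite $Y$ as an operator $\Psi_\star \circ e_{\tilde\omega} \circ \Phi^*$, where $\Phi = H : \mathcal{B} \times [0,1] \to N$, where $\Psi : \mathcal{B} \times [0,1] \to N$ is the Lipschitz submersion $(v_1, v_2, s) \mapsto p_f(v_2)$ that realises the combined integration over the interval, over the first factor of $\mathcal{B}$ and over the fibres of $p_f$, and where $\tilde\omega$ is the smooth, $\Psi$-fibrewise compactly supported form built from $\omega_1$ and $\omega_2$. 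Pushing $\tilde\omega$ forward along the combined map $(\Phi, \Psi) : \mathcal{B} \times [0,1] \to N \times N$ and running the same change-of-variables computation as in Proposition \ref{Tsmooth} shows that $Y$ is an integral operator with smooth kernel $K_Y$; since $Y^\dagger = \tau_N Y^* \tau_N$, the adjoint $Y^\dagger$ is again smoothing, hence so is $y$. That $K_Y$ — and therefore $K_y$ — has uniformly bounded support is then a matter of tracking the support of a form through each factor: $H^*$ sends a form supported near $q \in N$ to one supported over base points $p'$ with $d_N(f(p'), q) \le C_{p_f}\,\delta$ (because $|v_i| \le \delta$ and $p_f(0_{v_{p'}}) = f(p')$), wedging with $\omega_1$ or $\omega_2$ only shrinks support, the integrations $pr_{1,\star}$ and $p_{f,2\star}$ each enlarge it by at most $C_{p_f}\,\delta$, and $\int_{0,\mathcal{L}}^1$ does not move support in the $N$-direction; using that $f$ is uniformly proper this produces one constant $R$, depending only on $\delta$ and the Lipschitz data, with $K_Y(p,q) = 0$ whenever $d_N(p,q) > R$.

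Granting this, the first assertion is immediate: since the coarse structure of Example \ref{coarseexem} restricts on the $N$-part to the metric coarse structure of $N$, the uniform support bound says precisely that $Y$, $Y^\dagger$ and $y$ have propagation $\le R$, so $y$ is a $\Gamma$-equivariant operator with finite propagation. For the products I would first check that $y \in D^*_f(M \sqcup N)^\Gamma$: it is bounded, $\Gamma$-equivariant and controlled by the above, and it is pseudo-local — indeed locally compact — because for compactly supported $\phi \in C_0(M \sqcup N)$ the operators $\phi y$ and $y \phi$ are smoothing operators whose kernels, confined by the uniformly bounded support of $K_y$ together with the bounded geometry of $M \sqcup N$ to a compact subset of $(M \sqcup N) \times (M \sqcup N)$, are compact by Remark \ref{suppa}, while compactly supported functions are dense in $C_0(M \sqcup N)$. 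Since $C^*_f(M \sqcup N)^\Gamma$ is a two-sided ideal of $D^*_f(M \sqcup N)^\Gamma$ and $T_f, T_f^\dagger \in C^*_f(M \sqcup N)^\Gamma$ (the preceding Proposition and its Corollary), it follows that $T_f y$, $y T_f$, $T_f^\dagger y$ and $y T_f^\dagger$ all lie in $C^*_f(M \sqcup N)^\Gamma$. The main obstacle is the middle step: carrying out the analogue of Proposition \ref{Tsmooth} for the more involved map $H$ and the double bundle $\mathcal{B}$, and doing the support bookkeeping carefully enough to extract a single uniform constant.
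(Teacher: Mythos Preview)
Your approach is sound but takes a genuinely different route from the paper. You aim to show that $y$ itself is a smoothing operator with uniformly bounded support, hence actually lies in $C^*_f(M\sqcup N)^\Gamma$, and then deduce the products from the algebra (or ideal) structure. The paper never establishes that $y$ is smoothing or that $y\in D^*_f(M\sqcup N)^\Gamma$; it only checks finite propagation of $Y$ by a direct support-tracking argument (giving propagation at most $C_{p_f}+1$) and then handles the local compactness of the four products by exploiting $T_f$: since $T_f\in C^*_f(M\sqcup N)^\Gamma$, the factor $gT_f$ is compact, so $gT_fY$ is compact; for $T_fYg$ one uses the finite propagation of $Y$ to insert a compactly supported cutoff $\phi\equiv 1$ on the support of $Yg$ and writes $T_fYg=T_f\phi Yg$, where $T_f\phi$ is compact. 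This bypasses the need to analyse the Schwartz kernel of $Y$ entirely.

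What each buys: your route proves the stronger statement $y\in C^*_f(M\sqcup N)^\Gamma$ and makes the products a triviality, but the price is carrying out the analogue of Proposition~\ref{Tsmooth} for the considerably more involved map on $\mathcal{B}\times[0,1]$, where $(\Phi,\Psi)$ is only a submersion with positive-dimensional fibres rather than a diffeomorphism onto its image, so the kernel has to be extracted by a genuine fibre-integration argument rather than a change of variables. The paper's route is more economical: finite propagation alone suffices, and local compactness is borrowed from $T_f$ via the cutoff trick. One small remark on your support bookkeeping: you do not actually need the uniform properness of $f$ here, since both the source and the target of $Y$ are $N$ and the estimate $d_N(p,q)\le 2C_{p_f}\delta$ follows purely from the Lipschitz bound on $p_f$.
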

\begin{proof}
	Let us consider the operator $Y$ in Lemma \ref{y and Y}. If we show that $Y$ is an operator with finite propagation, then one can  check that the same holds for $Y^\dagger$ and for $y:= \frac{Y + Y^\dagger}{2}$.
	\\
	\\First, we have to observe that
	\begin{equation}
	Y:= p_{f,2\star} \circ e_{\omega_2} \circ pr_{1,\star} \circ e_{\omega_1} \circ \int_{0, \mathcal{L}}^1 \circ H^*.
	\end{equation}
	where $H$ is the homotopy defined in Lemma \ref{yformula}. Then its pullbcak is given by
	\begin{equation}
	H^* = A^* \circ pr_1^* \circ p_{f,2}^*.
	\end{equation}
	Let $\alpha$ be a compactly supported differential forms on $N$. 
	\\Since the formula (1.170) in Corollary 1.5.2 in \cite{thesis}, we have that
	\begin{equation}
	supp(p_{f,2}^*\alpha) = p_{f,2}^{-1}(supp(\alpha)) \subseteq \pi_2^{-1}(f^{-1}(B_1(supp(\alpha)))),
	\end{equation}
where $\pi_2: f^*(T^\delta N)_2 \longrightarrow M$ is the projection of the fiber bundle and where $B_1(supp(\alpha))$ is the $1$-neighborhood of $supp(\alpha)$.
	\\This means that
	\begin{equation}
	supp(pr_{1,\star} \circ e_{\omega_2} \circ e_{\omega_1} \circ \int_{0\mathcal{L}} \circ H^*(\alpha)) \subseteq \pi_2^{-1}(f^{-1}(B_1(supp(\alpha)))).
	\end{equation}
	and, in particular
	\begin{equation}
	supp(Y(\alpha)) \subseteq p_{f,2}(\pi_2^{-1}(f^{-1}(B_1(supp(\alpha)))))
	\end{equation}
	Moreover, if $(x,t)$ is in $\pi_2^{-1}(f^{-1}(B_1(supp(\alpha))))$, then
	\begin{equation}
	d(p_f(x,t), B_1(supp(\alpha)) \leq C_{p_f}
	\end{equation}
	where $C_{p_f}$ is the lipschitz costant of $p_f$. Indeed it is sufficient to observe that
	\begin{equation}
	p_f(x,0) = f(x) \subseteq B_1(supp(\alpha))
	\end{equation}
	and
	\begin{equation}
	d(p_f(x,1), p_f(x,0)) \leq C_{p_f}d((x,0),(x,1)) = C_{p_f}.
	\end{equation}
	Then we have that if $\phi$ and $\psi$ are two functions on $N$ with
	\begin{equation}
	d(supp(\phi), supp(\psi)) > C_{p_f} + 1 \implies \phi Y \psi = 0.
	\end{equation}
	\\
	\\Now in order to conclude the proof it is sufficient to show that $T_fY$, $YT_f$, $T_f^\dagger Y$, $Y T_f^\dagger$ are all operator in $C_f^*(M \sqcup N)^\Gamma$. We will start by studying $T_fY$.
	\\We know that $Y$ and $T_f$ are bounded operator which have both finite propagation, so also $T_fY$ is a bounded operator with finite propagation.
	\\Moreover if $g$ is a function on $M \sqcup N$ with compact support than we have that $gT_f$ is a compact operator since $T_f$ is in $C_f^*(M \sqcup N)^\Gamma$ and son $gT_fY$ is compact.
	\\Now let us consider $T_fYg$: now we know that for all $\alpha$ we have that
	\begin{equation}
	supp(Yg(\alpha)) \subseteq B_1(supp(g)).
	\end{equation}
	This means that, if $\phi$ is a compactly supported function such that $\phi \cong 1$ on $B_1(supp(g))$, then we have that
	\begin{equation}
	T_fYg = T_f \phi Y g.
	\end{equation}
	Observe that $T_f\phi$ is a compact operator. Then also holds for $T_fYg$. This means that $T_fY$ is in $C_f(M \sqcup N)^\Gamma$. Exactly in the same way one can check that $YT_f$, $T_f^\dagger Y$, $Y T_f^\dagger$ are operator in $C_f^*(M \sqcup N)^\Gamma$.
\end{proof}
\begin{rem}\label{FVdit}
Let us consider the kernel $K$ of $T_f$. Let us fix some local normal coordinates $\{x,y\}$ on $M \times N$. Since Proposition \ref{Tsmooth}, we know that outside $im(t_f)$ the kernel is identically $0$. Morever, inside $im(t_f)$, we have that, the kernel is locally given by
\begin{equation}
K_J^I(x,y) = \beta_{IJ^c}(x,y), 
\end{equation} 
where the functions $\beta_{IJ^c}(x,y)$ are the components of the pullback of the Thom form $\omega$ along the map $t_{f}^{-1}: im(t_f) \subset M \times N \longrightarrow f^*TN$.
\\
\\Let us recall that, respect to some fibered coordinates $\{x^i, y^j\}$ on $f^*T^\delta N$, the components of the Thom form $\omega$ are algebraic combinations of pullback of components of the metric $h$ on $N$ and of Christoffell symbols of the Levi-Civita connection $\nabla^E$ along the map $f$ and derivatives of $f$ (see subsection 4.2 of \cite{Spes}). This means that if $f$ is a $C^k_b$-map the local components of $\omega$ and their derivatives of order $k$ are uniformly bounded only .
\\Moreover we also know, because Lemma 3.3 \cite{Spes}, that if $f$ is a $C^k_b$-map for each $k$ in $\mathbb{N}$, then also $p_f$ and, in particular, $t_f$ have uniformly bounded derivatives of each order.
\\
\\Let us suppose that $t_f$ has bounded derivatives of order $k$. Then we know that $t_f$ is a diffeomorphism with its image. This fact, using also the inverse function Theorem, implies that $t_{f}^{-1}$ (up to consider $\delta \leq inj(N)$ small enough) has bounded derivatives of order $0, 1, ... k$.
\\
\\The bounds on the components of $\omega$ and on the derivatives of $t_f^{-1}$ imply that if $f$ is a $C^k_b$ map, then the components of the kernel $K$ of $T_f$ in normal coordinates have bounded derivatives of order $0, 1, ... k$.
\end{rem}
\begin{prop}\label{dT_fboun}
	Let $(M,g)$ and $(N,h)$ be two Riemannian manifolds of bounded geometry. Let $f:M \longrightarrow N$ be a smooth and lipschitz uniform homotopy equivalence. Then we have that $T_f$ and $dT_f =T_f d$ have uniformly bounded support (definition in Section \ref{smoothing}). Moreover if $f$ is a $C^2_{b}$-map, then $dT_f$ is a bounded operator.
\end{prop}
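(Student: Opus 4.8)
The plan is to separate the two claims. By Proposition \ref{Tsmooth} we may write $T_f\alpha(p) = \int_N K(p,q)\,\alpha(q)\, d\mu_N$ for a smooth section $K$ of $\Lambda^*(M) \boxtimes \Lambda(N)$, so everything reduces to statements about $K$. The assertion about supports is purely geometric and uses only the construction of $T_f$, whereas the boundedness of $dT_f$ will follow from the coordinate description of $K$ recorded in Remark \ref{FVdit}.

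First I would show that $T_f$ has uniformly bounded support. Recall from the proof that $T_f \in C^*_f(M\sqcup N)^\Gamma$ that $K(p,q)\neq 0$ forces $B^\delta_p \cap p_f^{-1}(q) \neq \emptyset$, where $B^\delta_p$ is the fiber over $p$ of $\pi: f^*TN \to M$ and $p_f$ is the submersion of Lemma 3.3 of \cite{Spes}, which is lipschitz with constant $C_{p_f}$ and satisfies $p_f(0_p) = f(p)$, $0_p$ being the zero vector over $p$. Fixing $p$: for $v_p \in B^\delta_p$ one has $d_N(f(p), p_f(v_p)) \leq C_{p_f}|v_p| \leq C_{p_f}\delta$, so $supp(K(p,\cdot)) \subseteq B_{C_{p_f}\delta}(f(p))$ has diameter at most $2C_{p_f}\delta$, independently of $p$. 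Fixing $q$: if $K(p,q)\neq 0$ there is $v_p\in B^\delta_p$ with $p_f(v_p)=q$, whence $d_N(f(p),q)\leq C_{p_f}\delta$, i.e. $p \in f^{-1}(B_{C_{p_f}\delta}(q))$; since $f$ is uniformly proper, the diameter of this preimage depends only on $2C_{p_f}\delta$, hence is bounded by a constant independent of $q$. Letting $Q$ be the larger of these two bounds yields the uniformly bounded support of $T_f$.

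Next, $T_f$ is a chain map (it induces a functor in $\mathcal{L}^2$-cohomology, see \cite{Spes}), so $dT_f = T_f d$; by Lemma \ref{impo} this is again a smoothing operator, and by Remark \ref{derivsmooth} the support of its kernel is contained in $supp(K)$, so $dT_f$ inherits uniformly bounded support from $T_f$. Finally, if $f$ is a $C^2_b$-map, then by Remark \ref{FVdit} the local components $K^I_J$ of $K$ in normal coordinates, being algebraic combinations of components of the Thom form $\omega$ and of $t_f^{-1}$, have uniformly bounded derivatives of orders $0,1,2$; in particular $|\partial_{x^l}K^I_J| \leq C$ uniformly. Since $T_f$ has uniformly bounded support, the last part of Remark \ref{derivsmooth} applies and gives that $dT_f$ is bounded (equivalently, the kernel $d_M K$ of $dT_f$ furnished by Lemma \ref{impo} then has uniformly bounded components, hence uniformly bounded pointwise norm by bounded geometry, and Proposition \ref{smoothbound} applies to $dT_f$).

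The main obstacle is the second half of the support estimate, controlling $supp(K(\cdot,q))$ for fixed $q$: this is the only step that genuinely uses uniform properness of $f$ rather than mere bookkeeping with the kernel formulas, and carrying it out cleanly requires the explicit lipschitz bound on $p_f$ from \cite{Spes}.
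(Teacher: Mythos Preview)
Your proposal is correct and follows essentially the same approach as the paper: control the support of $K$ in each variable via the lipschitz bound on $p_f$ and the uniform properness of $f$, then invoke Remark~\ref{FVdit} and Remark~\ref{derivsmooth} (equivalently Proposition~\ref{smoothbound}) for the boundedness of $dT_f$. The only cosmetic difference is in the bound on $supp(K(\cdot,q))$: the paper cites a formula from \cite{thesis} to place $\pi(p_f^{-1}(q))$ inside $B_1(f^{-1}(q))$ and then uses uniform properness on $f^{-1}(q)$, while you argue directly that $f(p)\in B_{C_{p_f}\delta}(q)$ and use uniform properness on the preimage of that ball---both routes yield the same uniform diameter bound.
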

\begin{proof}
	We will first show that $T_f$ has uniformly bounded supported kernel $K$. Since $supp(d_M K) \subseteq supp(K)$ it will follow immediately that also $dT_f$ has uniformly bounded supported kernel. 
	\\
	\\We know, since Proposition \ref{Tsmooth} that $K(p,q) = 0$ if $(p,q)$ is not in $im(t_f)$, where $t_f = (\pi, p_f)$. \\Fix $p$, a point of $M$. Observe that if $q$ is not in $p_f(B^\delta_p)$, then $(p,q) \notin im(t_f)$ and so $K(p,q) = 0$. We know, since $p_f$ is lipschitz and $diam(B^\delta) = 2\delta$, that
	\begin{equation}
	diam(supp(K(p, \cdot)) \leq  2\delta C_{p_f}.
	\end{equation}
	Fix now $q$ in $N$. We have that, because of formula (1.170) in Corollary 1.5.2 of \cite{thesis}, that
	\begin{equation}
	\pi(p_f^{-1}(q)) \subseteq B_1(f^{-1}(q))
	\end{equation}
	and if $p$ is not in $B_1(f^{-1}(q))$ then $K(p,q) = 0$. Then, since $f$ is a uniform homotopy equivalence, then in particular we have that $f$ is a uniformly proper map. This means that
	\begin{equation}
	diam(f^{-1}(A)) \leq \alpha(diam(A))
	\end{equation}
	for a function $\alpha: [0, +\infty] \longrightarrow [0, +\infty)$. Then we have that
	\begin{equation}
	diam(supp(K(\cdot, q)) \leq  \alpha(0) +1.
	\end{equation}
	In order to conclude the proof we want to apply Proposition \ref{smoothbound}. In order to apply this Proposition we have to show that
	\begin{equation}\label{true}
		\vert\frac{\partial}{\partial x^j}K^I_J(x,y)\vert \leq C.
	\end{equation}
	Because of Remark \ref{FVdit} and Remark \ref{derivsmooth} in Section \ref{smoothing} this fact imediatly follows if $f$ is a $C^2_b$-map.
\end{proof}
\begin{cor}\label{corn}
	The operators $d_{M}T_f = T_fd_{N}$, $T_f^\dagger d_{M} = d_{N} T_f^\dagger$, $T_fyd_{N}$, $d_{M}T_fy$, $y T_f^\dagger d_{M}$ $d_NyT_f^\dagger$ are all operators in $C^*_f(M \sqcup N)^\Gamma$.
\end{cor}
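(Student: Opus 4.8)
The plan is to reduce everything to Proposition \ref{dT_fboun} and to the arguments already used for $T_f$ and for $y$. By the approximation result recalled in the Introduction (Proposition 1.7 of \cite{Spes}) we may and do assume that $f$ is a $\Gamma$-equivariant smooth lipschitz map whose derivatives in normal coordinates are uniformly bounded; in particular $f$ is a $C^2_b$-map, so Proposition \ref{dT_fboun} applies.

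\textbf{Step 1: $d_M T_f = T_f d_N \in C^*_f(M\sqcup N)^\Gamma$.} By Lemma \ref{impo} the operator $d_M T_f$ is a smoothing operator whose smooth kernel $d_M K$ is supported inside $supp(K)$, so by Proposition \ref{dT_fboun} it has uniformly bounded support, hence finite propagation, and it is bounded. For local compactness, let $\phi$ be a compactly supported continuous function on $M\sqcup N$: the kernels of $\rho(\phi)d_M T_f$ and of $d_M T_f\rho(\phi)$ are $\phi(p)\,d_M K(p,q)$ and $d_M K(p,q)\,\phi(q)$; using that $M$ and $N$ are complete manifolds of bounded geometry (so that $R$-neighbourhoods of compact sets are relatively compact, by Hopf--Rinow), the bounds $diam(supp(K(p,\cdot)))\le R$, $diam(supp(K(\cdot,q)))\le R$, and the uniform properness of $f$, both kernels are compactly supported, so by Remark \ref{suppa} the two operators are compact. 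A controlled locally compact operator is automatically pseudo-local, and $d_M T_f$ is $\Gamma$-equivariant because $d_M$ commutes with the isometries of $\Gamma$ and $T_f$ is $\Gamma$-equivariant; hence $d_M T_f \in C^*_f(M\sqcup N)^\Gamma$.

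\textbf{Step 2: the remaining operators.} Write $\dagger$ for the involution $A\mapsto\tau A^*\tau$; since $\tau$ is a bundle endomorphism it commutes with every $\rho(g)$ and has zero propagation, so $\tau\in D^*_f(M\sqcup N)^\Gamma$, and as $C^*_f(M\sqcup N)^\Gamma$ is a $*$-closed ideal in $D^*_f(M\sqcup N)^\Gamma$ it is closed under $\dagger$. From the relation $d^*=\varepsilon\,\tau d\tau$ ($\varepsilon\in\{\pm1\}$, built into the definition of $D_M$) one computes $(d_M T_f)^\dagger=\varepsilon\,T_f^\dagger d_M$ and $(T_f d_N)^\dagger=\varepsilon\,d_N T_f^\dagger$, so Step 1 gives
\begin{equation}
T_f^\dagger d_M = d_N T_f^\dagger = \varepsilon\,(d_M T_f)^\dagger \in C^*_f(M\sqcup N)^\Gamma .
\end{equation}
For the operators involving $y$ I would argue exactly as in the proof of the Proposition stating that $T_f y, yT_f, T_f^\dagger y, yT_f^\dagger$ lie in $C^*_f(M\sqcup N)^\Gamma$: $d_M T_f y=(d_M T_f)y$ and $yT_f^\dagger d_M = y(d_N T_f^\dagger)$ are products of an element of $C^*_f(M\sqcup N)^\Gamma$ with the bounded, finite-propagation, $\Gamma$-equivariant operator $y$, and such a product is bounded, controlled, and locally compact — for a compactly supported $g$ one multiplies on the side of $y$ by a compactly supported function equal to $1$ on the $R$-neighbourhood of $supp(g)$, $R$ the propagation of $y$, and invokes local compactness of the $C^*_f$-factor — hence lies in $C^*_f(M\sqcup N)^\Gamma$. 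Next, on $dom(d_{N,min})$ the relation $1-T_f^\dagger T_f=dy+yd$, together with $T_f d_N=d_M T_f$ and $y(dom(d_{N,min}))\subseteq dom(d_{N,min})$, yields
\begin{equation}
T_f y d_N = T_f - T_f T_f^\dagger T_f - T_f d_N y = T_f - T_f T_f^\dagger T_f - (d_M T_f) y,
\end{equation}
whose right-hand side is bounded and has every summand in $C^*_f(M\sqcup N)^\Gamma$, so $T_f y d_N$ extends to an element of $C^*_f(M\sqcup N)^\Gamma$. Finally, applying $\dagger$ and using $y^\dagger=y$, $T_f^{\dagger\dagger}=T_f$, $d_N^\dagger=\varepsilon\,d_N$ (on compactly supported smooth forms, where $T_f^\dagger$ lands in $\Omega^*_c$ and $y(\Omega^*_c)\subseteq\Omega^*_c$),
\begin{equation}
d_N y T_f^\dagger = \varepsilon\,(T_f y d_N)^\dagger \in C^*_f(M\sqcup N)^\Gamma .
\end{equation}

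The main obstacle is Step 1: showing that $d_M T_f$ genuinely lies in the Roe algebra. Boundedness is exactly Proposition \ref{dT_fboun} (which is what forces the reduction to $C^2_b$-maps), while local compactness requires controlling the supports of the kernels of $\rho(\phi)d_M T_f$ and $d_M T_f\rho(\phi)$ by means of completeness and uniform properness so that Remark \ref{suppa} applies. Once $d_M T_f\in C^*_f(M\sqcup N)^\Gamma$ is in hand, the other operators follow purely formally from the $C^*$-algebra and ideal structure, the symmetry $d^*=\varepsilon\,\tau d\tau$, and the cutoff argument already used for the operators built out of $y$.
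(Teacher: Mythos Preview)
Your proof is correct and follows essentially the same route as the paper: first establish $d_M T_f \in C^*_f(M\sqcup N)^\Gamma$ via Proposition \ref{dT_fboun} (boundedness) together with the fact that $supp(d_M K)\subseteq supp(K)$ (finite propagation) and Remark \ref{suppa} (local compactness from compactly supported smoothing kernels), then pass to $T_f^\dagger d_M=d_N T_f^\dagger$ via the $\dagger$-involution, and finally handle the terms involving $y$ using the relation $1-T_f^\dagger T_f = d y + y d$ exactly as you do. The only cosmetic difference is organizational: the paper deals with $T_f y d_N$ first via the algebraic identity and then dismisses $d_M T_f y$, $yT_f^\dagger d_M$, $d_N y T_f^\dagger$ with ``the same argument'', whereas you treat $d_M T_f y$ and $yT_f^\dagger d_M$ directly as products of a Roe-algebra element with the bounded finite-propagation operator $y$ (using the cutoff trick from the preceding Proposition) and obtain $d_N y T_f^\dagger$ by applying $\dagger$ to $T_f y d_N$; both orderings are valid and rest on the same ingredients.
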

\begin{proof}
	Since $d_{M}T_f = T_fd_{N}$ is a bounded integral operator with $supp(d_{M} T_f) \subseteq supp(T_f)$ then $d_{M}T_f$ has finite propagation. Moreover, since it has finite propagation we have that for all $g$ in $C_c(M \sqcup N)$ then $d_{M}T_fg$ and $gdT_f$ are smoothing operator with compact support and so are compact operators (Remark \ref{suppa}).
	\\
	\\Observe that
	\begin{equation}
	T_f^\dagger d_{M} = -T_f^\dagger d_{M}^\dagger = -(d_{M} T_f)^\dagger
	\end{equation}
	and since $d_{M}T_f$ is in $C^*(M\sqcup N)^\Gamma$ we have that the same holds for $T_f^\dagger d_{M} = d_{N} T_f^\dagger$.
	\\Finally Observe that
	\begin{equation}
	\begin{split}
	T_fyd_{N} &= T_f(d_{N}y - 1 - T_f^\dagger T_f) \\
	&=(T_fd_{N})y - T_f - T_f T_f^\dagger T_f
	\end{split}
	\end{equation}
	which is a combination of operators in $C_f^*(M\sqcup N)^\Gamma$. All the other combinations are in $C_f^*(M \sqcup N)^\Gamma$ for the same argument. 
\end{proof}
	\section{Uniform homotopy invariance}\label{wai}
	\subsection{The perturbed signature operator}
	Consider two oriented, connected, Riemannian manifolds $(M,g)$ and $(N,h)$ and fix a lipschitz homotopy equivalence $f: (M,g) \longrightarrow (N,h)$. Since $f$ is a lipschitz homotopy equivalence, we have that $dim(M) = dim(N)$.
	Consider $\mathcal{L}^2(M \bigsqcup N) \cong \mathcal{L}^2(N) \oplus \mathcal{L}^2(M)$. 
	\begin{defn}
		Let us define, for each $z$ in $\mathbb{N}$, $p(z) = 0$ in $z$ is even and $p(z) = 1$ otherwise. We denote by $d_{M \sqcup N}$ the operator
		\begin{equation}
		d_{M \sqcup N} := i^{p(dim(M))}\begin{bmatrix}
		d_N && 0 \\
		0 && -d_M
		\end{bmatrix}.
		\end{equation}
		if the dimensions are odd. Moreover we define the \textbf{Signature operator} as
		\begin{equation}
		D_{M \sqcup N} := \begin{bmatrix}
		D_N && 0 \\
		0 && -D_M
		\end{bmatrix}
		\end{equation}
	\end{defn}
	Let $\gamma: \mathcal{L}^2(M\sqcup N) \longrightarrow \mathcal{L}^2(M\sqcup N)$ be the operator defined for all $\alpha$ as $\gamma (\alpha) := (-1)^{\vert\alpha\vert}$. We have that $\gamma ^{\dagger} = -\gamma$.
	\\Let us define $\begin{bmatrix}
	1 && 0 \\
	\beta T_f && 1
	\end{bmatrix}.$
	We have that $R_\beta$ is $\mathcal{L}^2$-invertible. Let $\tau$ be the chirality operator $\tau :=
	\begin{bmatrix}
	\tau_N && 0 \\
	0 && -\tau_M
	\end{bmatrix}.$
	The operator $\gamma$ commutes with $\tau$ and with $d_{M\sqcup N}$.
	\\Consider a real number $\alpha$: we define
	\begin{equation}
	L_{\alpha, \beta} := \begin{bmatrix}
	1- T^{\dagger}_fT_f && \beta (\gamma + \alpha y)T_f^{\dagger} \\
	\beta T_f(-\gamma -\alpha y ) && 1
	\end{bmatrix}.
	\end{equation}
	Moreover, if the dimension of $M$ is even, we will define the operator $\delta_{\alpha}$ as follow
	\begin{equation}
	\delta_{\alpha} :=
	\begin{bmatrix}
	d_N && \alpha iT^{\dagger}_f\gamma \\
	0 && -d_M
	\end{bmatrix},
	\end{equation}
	in the other case we have
	\begin{equation}
	\delta_{\alpha} :=
	\begin{bmatrix}
	i\cdot d_N && -\alpha T^{\dagger}_f \gamma \\
	0 && -i \cdot d_M
	\end{bmatrix},
	\end{equation}
We have that $\delta_{\alpha}^2 = 0$. Since $d_{M \sqcup N}$ is a closed operator and $T^{\dagger}$ is bounded, then $\delta_\alpha$ is a closed operator with the same domain of $d_{M \sqcup N}$.
	\\Observe that $L_{\alpha, \beta} d_{M \sqcup N} = d_{M \sqcup N} L_{\alpha, \beta}$, $L_{\alpha, \beta} \delta_\alpha = \pm \delta_\alpha^{\dagger} L_{\alpha, \beta}$ (we have a plus if $dim(M)$ is odd, a minus otherwise) and that $L_{\alpha, \beta}^\dagger = L_{\alpha, \beta}$. As consequence of this fact we have that $\tau L_{\alpha, \beta}$ is a self-adjoint operator.
	\\Morover we have that $R^{\dagger}_\beta R_\beta = L_{0, \beta}$, so $L_{0, \beta}$ is $\mathcal{L}^2$-invertible if $\beta \neq 0$ and the same holds for $L_{\alpha, \beta}$ if $\vert\alpha\vert$ is small enough. In this case we have the following well-defined operator
	\begin{equation}
	S_{\alpha, \beta} := \frac{\tau L_{\alpha, \beta}}{\vert\tau L_{\alpha, \beta}\vert}.
	\end{equation}
	Observe that $S_\alpha$ is invertible (it is in particular an involution). We also have that $S_\alpha^{\dagger} = S_\alpha$.
	\\Consider, now $U_\alpha := (\vert\tau L_\alpha\vert)^{\frac{1}{2}}$. Since $\vert\tau L_\alpha\vert$ is invertible, also $U_\alpha$ is invertible.
	\begin{defn}
		If the dimension of $M$ is even, we will call \textbf{perturbed signature operator} the operator
		\begin{equation}
		D_{\alpha, \beta} := U_{\alpha, \beta}(\delta_\alpha - S_{\alpha, \beta} \delta_\alpha S_{\alpha, \beta} )U_{\alpha,\beta}^{-1}.
		\end{equation}
		if $M$ has dimension odd, we have that
		\begin{equation}
		D_{\alpha,\beta} := -i U_{\alpha,\beta}( S_{\alpha,\beta} \delta_\alpha +\delta_\alpha S_{\alpha, \beta} )U_{\alpha, \beta}^{-1}.
		\end{equation}
	\end{defn}
	\subsection{$L^2$-invertibility of $D_{\alpha,1}$}
	\begin{lem}
		Let us consider two Riemannian manifolds of bounded geometry $(M,g)$ and $(N,h)$ and let $f$ be a smooth and lipschitz  uniform homotopy equivalence. Then we have that $ker(\delta_{\alpha}) = im(\delta_{\alpha})$ in $\mathcal{L}^2(M \sqcup N)$.
	\end{lem}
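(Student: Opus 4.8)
The plan is to recognise $\delta_\alpha$ as the differential of the algebraic mapping cone of the cochain map $T_f^\dagger$, and to exploit that $T_f$, hence $T_f^\dagger$, is a \emph{bounded} chain homotopy equivalence: the mapping cone of such a map is boundedly contractible, and a bounded contraction gives $ker(\delta_\alpha)=im(\delta_\alpha)$ at once. Since the paper already records $\delta_\alpha^2=0$, the inclusion $im(\delta_\alpha)\subseteq ker(\delta_\alpha)$ is free, so it suffices to produce a bounded operator $G$ on $\mathcal{L}^2(M\sqcup N)$ that preserves $dom(\delta_\alpha)$ and satisfies
\[
\delta_\alpha G+G\delta_\alpha=1;
\]
then every $\xi\in ker(\delta_\alpha)$ equals $\delta_\alpha(G\xi)\in im(\delta_\alpha)$, and as a bonus $im(\delta_\alpha)$ is closed, which is what the subsequent invertibility of $D_{\alpha,1}$ will rely on.

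First I would assemble the ingredients. By Corollary~\ref{corn} we have $d_NT_f^\dagger=T_f^\dagger d_M$ and $d_MT_f=T_fd_N$, so $T_f^\dagger$ is a bounded cochain map $(\mathcal{L}^2(M),d_M)\to(\mathcal{L}^2(N),d_N)$ and, with the powers of $i$ and the grading $\gamma$ chosen exactly so that $\delta_\alpha^2=0$, the operator $\delta_\alpha$ is block by block the cone differential $\begin{bmatrix}d_N&\pm\alpha i T_f^\dagger\gamma\\0&-d_M\end{bmatrix}$ of $T_f^\dagger$. The Proposition constructing $y$ provides the identity $1-T_f^\dagger T_f=d_Ny+yd_N$ on $dom(d_{min})$, with $y$ bounded, $\dagger$-selfadjoint, and preserving $dom(d_{min})$. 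I also need the companion statement on $M$: a bounded, $dom(d_{min})$-preserving operator $z$ with $1-T_fT_f^\dagger=d_Mz+zd_M$. This is available because $T_f$ is the $\mathcal{L}^2$-pullback of the uniform homotopy equivalence $f$ and is a bounded chain homotopy equivalence by \cite{Spes} and \cite{thesis}, whence so is its adjoint $T_f^\dagger$, with homotopy inverse $T_f$; concretely, a homotopy inverse $g$ of $f$ yields $T_g$ with $1-T_gT_f=d_Nw+wd_N$ and $1-T_fT_g=d_Mw'+w'd_M$, and comparing $y$ with $w$ (both are left homotopy inverses of $T_f^\dagger$ up to homotopy) one manufactures the desired $z$ from $w'$ and a homotopy correction.

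Given these data, $G$ is the standard bounded contraction of the cone: up to the bounded decorations ($i$, $\gamma$, signs) it has block form $\begin{bmatrix}y&b\\ iT_f&-z\end{bmatrix}$, where $b$ is the bounded correction term forced by the one-sidedness of the homotopies $y$ and $z$ (the relation pinning it down is that $yT_f^\dagger-T_f^\dagger z$, which a short computation shows anticommutes with the differentials, can be taken to be a coboundary for a suitable choice of $z$). One then checks $\delta_\alpha G+G\delta_\alpha=1$ by a $2\times2$ block computation using only the two commutation relations of Corollary~\ref{corn}, the two homotopy identities, $\gamma^2=1$, and the elementary sign relations among $\gamma$, $\tau$ and the powers of $i$: the $(1,1)$- and $(2,2)$-blocks are handled by the diagonal entries, the $(2,1)$-block by $d_MT_f=T_fd_N$, and the $(1,2)$-block by $b$. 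Since every building block of $G$ is bounded and preserves $dom(d_{min})$, the identity — first verified on $\Omega^*_c(M\sqcup N)$ — propagates to all of $dom(\delta_\alpha)$ by the density argument already used for $y$ (Proposition~2.2 of \cite{Spes}). Alternatively one may avoid the explicit $G$ and argue via the long exact cohomology sequence of the cone together with closedness of images, using that $T_f^\dagger$ is an isomorphism on $\mathcal{L}^2$-cohomology; I would still prefer the explicit homotopy.

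The step I expect to be the genuine obstacle is producing the $M$-side homotopy $z$ with the right boundedness and domain behaviour and, with it, the correction term $b$: this is the only point where the full homotopy-equivalence hypothesis on $f$ — not merely the one-sided homotopy inverse supplied by the $y$-Proposition on its own — is really used, and the only place where an honest argument rather than formal bookkeeping is needed. Everything else reduces to tracking the signs and factors of $i$ so that $G$ inverts $\delta_\alpha$ on the nose, and to staying inside $dom(d_{min})$ throughout.
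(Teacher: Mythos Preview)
Your alternative — the long exact sequence of the mapping cone together with $T_f$ being an isomorphism on unreduced $\mathcal{L}^2$-cohomology — is exactly the paper's proof. The paper simply recognises $\delta_\alpha$ as a cone differential and invokes Corollary~4.17 of \cite{Spes} to conclude acyclicity; that is the whole argument. Your worry about ``closedness of images'' in that route is unfounded: the cone long exact sequence is a statement about algebraic (unreduced) cohomology, so vanishing cohomology of the cone literally is the equality $\ker(\delta_\alpha)=\mathrm{im}(\delta_\alpha)$ with no closure taken, and closedness of the image then comes for free since the kernel of a closed operator is closed.

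Your preferred explicit contraction is a legitimate and more constructive alternative, but it genuinely costs more. The $N$-side homotopy $y$ is supplied by the paper, but an $M$-side homotopy $z$ for $1-T_fT_f^\dagger$ (as opposed to $1-T_fT_g$) is nowhere provided, and your sketch of how to manufacture it from $T_g$ plus a correction is where the real work would lie; on top of that, the entry $b$ requires a coherence adjustment of the pair $(y,z)$ or an explicit higher homotopy. None of this is impossible, but the paper's one-line cohomological argument bypasses it entirely, at the price of being less explicit.
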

	\begin{proof}
		Since $T_f d = d T_f$, we have that $T_f$ gives a map between the complexes
		\begin{equation}
		0 \xrightarrow{d_N}....\xrightarrow{d_N}dom(d_N)^{k-1} \xrightarrow{d_N} dom(d_N)^k \xrightarrow{d_N} dom(d_N)^{k+1} \xrightarrow{d_N} ...
		\end{equation}
		and
		\begin{equation}
		0 \xrightarrow{-d_M}....\xrightarrow{-d_M}dom(d_M)^{k-1} \xrightarrow{-d_M} dom(d_M)^k \xrightarrow{-d_M} dom(d_M)^{k+1} \xrightarrow{-d_M} ...
		\end{equation}
		In particular, we have that $\delta_\alpha$ is a mapping cone over these chains. Now, since $T_f$ is an isomorphism in $\mathcal{L}^2$-cohomology (Corollary 4.17 of \cite{Spes}), this means that $\delta_\alpha$ is acyclic, i.e.
		\begin{equation}
		ker(\delta_{\alpha}) = im(\delta_{\alpha}).
		\end{equation}
	\end{proof}
	\begin{prop}
		The operator $D_{\alpha, \beta}$ is injective if $\vert\alpha\vert$ is small enough and $\beta \neq 0$.
	\end{prop}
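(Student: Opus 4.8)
The plan is to peel off the invertible conjugating operators and reduce to the injectivity of a twisted signature operator on $\mathcal{L}^2$-forms, which is then the classical acyclicity argument for Hilbert--Poincar\'e complexes. Since $U_{\alpha,\beta}$ is bounded with bounded inverse, $D_{\alpha,\beta}$ is injective if and only if the operator
\[
\mathcal{D}_{\alpha,\beta} := \delta_\alpha - S_{\alpha,\beta}\,\delta_\alpha\, S_{\alpha,\beta}
\]
(respectively $\mathcal{D}_{\alpha,\beta} := S_{\alpha,\beta}\,\delta_\alpha + \delta_\alpha\, S_{\alpha,\beta}$ in the odd case, the scalar $-i$ being irrelevant) is injective on its natural domain $\{\xi\in dom(\delta_\alpha)\,:\,S_{\alpha,\beta}\xi\in dom(\delta_\alpha)\}$. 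I would first record the available structural facts. Put $Q := \tau L_{\alpha,\beta}$; then $Q$ is self-adjoint, $S_{\alpha,\beta}$ is the sign of $Q$, its $\pm1$-eigenspaces $\mathcal{H}_\pm$ are the positive and negative spectral subspaces of $Q$ (so $Q$ agrees with $\pm|Q|$ on $\mathcal{H}_\pm$), and $|Q| = U_{\alpha,\beta}^2$. The hypothesis enters exactly here: for $\beta\neq0$ and $|\alpha|$ small $L_{\alpha,\beta}$ is $\mathcal{L}^2$-invertible, hence so is $Q$, hence $|Q|$ is bounded below by a positive constant. Finally, multiplying $L_{\alpha,\beta}\delta_\alpha = \pm\delta_\alpha^\dagger L_{\alpha,\beta}$ on the left by $\tau$ and using $\delta_\alpha^\dagger = \tau\delta_\alpha^*\tau$ gives $Q\delta_\alpha = \pm\delta_\alpha^* Q$; in particular $Q$ carries $dom(\delta_\alpha)$ into $dom(\delta_\alpha^*)$, and $\delta_\alpha^* Q\delta_\alpha$ is a scalar multiple of $Q\delta_\alpha^2$, hence vanishes.

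With these in place the argument is short. One checks that $\mathcal{D}_{\alpha,\beta}$ anticommutes with $S_{\alpha,\beta}$ in the even case and commutes with it in the odd case, so relative to $\mathcal{L}^2(M\sqcup N) = \mathcal{H}_+\oplus\mathcal{H}_-$ a vector $\xi\in ker(\mathcal{D}_{\alpha,\beta})$ may be taken to lie in a single $\mathcal{H}_\pm$. The equation $\mathcal{D}_{\alpha,\beta}\xi = 0$ then forces $\delta_\alpha\xi$ to lie in a definite eigenspace of $S_{\alpha,\beta}$, so that $\langle\delta_\alpha\xi,|Q|\delta_\alpha\xi\rangle = |\langle\delta_\alpha\xi, Q\delta_\alpha\xi\rangle|$; combined with $\delta_\alpha^* Q\delta_\alpha = 0$ this gives
\[
\langle\delta_\alpha\xi,\,|Q|\,\delta_\alpha\xi\rangle = |\langle\xi,\,\delta_\alpha^* Q\,\delta_\alpha\xi\rangle| = 0,
\]
whence $\delta_\alpha\xi = 0$ because $|Q|$ is bounded below. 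Now the preceding Lemma gives $ker(\delta_\alpha) = im(\delta_\alpha)$ — this is where the homotopy equivalence $f$ is used — so $\xi = \delta_\alpha\eta$ for some $\eta\in dom(\delta_\alpha)$, and the same computation yields $\langle\xi,|Q|\xi\rangle = |\langle\delta_\alpha\eta, Q\delta_\alpha\eta\rangle| = 0$, hence $\xi = 0$. This is exactly the classical remark that an acyclic Hilbert--Poincar\'e complex has injective (indeed invertible) signature operator, $ker(\delta+\delta^\dagger) = ker(\delta)\cap ker(\delta^\dagger) = im(\delta)\cap im(\delta)^\perp = 0$, transplanted to the perturbed complex through the (possibly indefinite) pairing $(u,v)\mapsto\langle u, Qv\rangle$, for which $\delta_\alpha^\dagger$ is the adjoint of $\delta_\alpha$; the two parities are identical up to the signs recorded above.

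The only point requiring genuine care is the unboundedness of $\delta_\alpha$: every step must be legitimate on the relevant domains. Three observations suffice. First, $\delta_\alpha$ maps $dom(\delta_\alpha) = dom(d_{M\sqcup N})$ into $ker(\delta_\alpha)\subseteq dom(\delta_\alpha)$ — because $d_{min} = d_{max}$ on complete manifolds and the off-diagonal entry of $\delta_\alpha$ is bounded — so $\delta_\alpha^2 = 0$ and $\delta_\alpha^* Q\delta_\alpha$ make sense as written and $\delta_\alpha\xi$ may be differentiated again. Second, the intertwining $Q\delta_\alpha = \pm\delta_\alpha^* Q$ is precisely the statement that $Q$ carries $dom(\delta_\alpha)$ into $dom(\delta_\alpha^*)$, which is all that is needed; in particular one never has to know that $S_{\alpha,\beta}$ or $U_{\alpha,\beta}$ preserve $dom(\delta_\alpha)$. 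Third, the spectral splitting descends to $dom(\mathcal{D}_{\alpha,\beta})$ because, if $\xi$ and $S_{\alpha,\beta}\xi$ both lie in $dom(\delta_\alpha)$, then so do $\tfrac12(\xi\pm S_{\alpha,\beta}\xi)$. I expect this domain bookkeeping across the non-self-adjoint perturbation $L_{\alpha,\beta}$ to be the main obstacle; the algebra itself is just the computation displayed above, and a quantitative reading of it, together with the closedness of $im(\delta_\alpha)$ built into the acyclicity statement, upgrades injectivity to $\mathcal{L}^2$-invertibility should that be needed later.
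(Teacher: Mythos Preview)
Your argument is correct and follows essentially the same route as the paper. The paper introduces the \emph{positive definite} perturbed inner product $\langle\cdot,\cdot\rangle_{\alpha,\beta}:=\langle\cdot,|\tau L_{\alpha,\beta}|\,\cdot\rangle$, verifies that $\pm S_{\alpha,\beta}\delta_\alpha S_{\alpha,\beta}$ is the adjoint of $\delta_\alpha$ with respect to it, and then invokes the standard $ker(\delta+\delta^{*})=ker(\delta)\cap ker(\delta^{*})=\{0\}$ for an acyclic complex; this is precisely your computation, except that by working directly with $|Q|$ rather than the indefinite pairing $\langle\cdot,Q\,\cdot\rangle$ one avoids the eigenspace splitting of $S_{\alpha,\beta}$ altogether. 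Your version recovers the same positivity by restricting to $\mathcal{H}_\pm$, where $Q=\pm|Q|$; the two organizations are equivalent, and your domain bookkeeping (in particular that only $Q:\,dom(\delta_\alpha)\to dom(\delta_\alpha^*)$ is needed, not any mapping property of $S_{\alpha,\beta}$ or $U_{\alpha,\beta}$) is a useful explicit record of what the paper leaves to its reference.
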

	\begin{proof}
	This is a direct computation. First we define a \textit{perturbed} scalar product on $\mathcal{L}^2(M \sqcup N )$ posing $\langle \cdot , \cdot \rangle_{\alpha \beta} := \langle \cdot , \vert \tau L_{\alpha \beta} \vert \cdot \rangle $ where $\langle \cdot , \cdot \rangle$ is the standard scalar product on $\mathcal{L}^2(M \sqcup N )$. Second we prove that $ \vert \tau L_{\alpha \beta} \vert$,  $\tau L_{\alpha \beta}$ and $S_{\alpha,\beta}$ are self-adjoint with respect $\langle \cdot ; \cdot \rangle_{\alpha \beta}$. Then we prove that $\delta_\alpha^* := \pm S_{\alpha,\beta} \delta_\alpha S_{\alpha,\beta}$ is the adjoint\footnote{we have a $-$ if the dimension of $M$ is even and a $+$ otherwise} of $\delta_\alpha$. Finally we conclude by proving that 
	\begin{equation}
	\hat{D}_{\alpha,\beta} := \delta_\alpha \pm \delta_\alpha^*
	\end{equation}
	is injective. Indeed $ker(\hat{D}_{\alpha,\beta}) = ker(\delta_\alpha) \cap ker(\delta_\alpha^*) = \{0\}$. The injectivity of $D_{\alpha,\beta}$ immediately follows because it is composition of injective oprators. Calculations can be found in Proposition 2.3.2 of \cite{thesis}.
	\end{proof}
	\begin{rem}
		The operator $D_{\alpha, \beta}$ is self-adjoint with respect to the standard scalar product.
		Indeed we have that the even perturbed signature operator is
		\begin{equation}
		D_{\alpha, \beta} =  U_{\alpha, \beta} \hat{D}_{\alpha,\beta} U^{-1}_{\alpha, \beta}.
		\end{equation}
		and the odd perturbed signature operator is
		\begin{equation}
		D_{\alpha, \beta} =  U_{\alpha, \beta} S_{\alpha, \beta} \hat{D}_{\alpha,\beta} U^{-1}_{\alpha, \beta}.
		\end{equation}
		We know that $\hat{D}_{\alpha,\beta}$ is a self-adjoint operator respect to the scalar product $\langle \cdot, \cdot \rangle_{\alpha,\beta}$. Then the self-adjointeness of $D_{\alpha, \beta}$ follows since a direct calculation and by using that $U_{\alpha, \beta} = \vert \tau L_{\alpha, \beta} \vert U_{\alpha, \beta}^{-1}$ is self-adjoint.
	\end{rem}
	\begin{prop}
		The operator $D_{\alpha,\beta}$ is $L^2$-invertible if $\alpha$ is small enough and $\beta \neq 0$.
	\end{prop}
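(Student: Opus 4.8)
The plan is to reduce the statement to the invertibility of the algebraic operator $\hat{D}_{\alpha,\beta}=\delta_\alpha+\delta_\alpha^{*}$ of the previous Remark and then to run a standard Hodge argument powered by the acyclicity of $\delta_\alpha$ from the Lemma above. Since $U_{\alpha,\beta}=(\vert\tau L_{\alpha,\beta}\vert)^{1/2}$ and $S_{\alpha,\beta}$ are bounded and boundedly invertible on $\mathcal{L}^2(M\sqcup N)$, and since $D_{\alpha,\beta}=U_{\alpha,\beta}\hat{D}_{\alpha,\beta}U_{\alpha,\beta}^{-1}$ in the even case and $D_{\alpha,\beta}=U_{\alpha,\beta}S_{\alpha,\beta}\hat{D}_{\alpha,\beta}U_{\alpha,\beta}^{-1}$ in the odd case, $D_{\alpha,\beta}$ is $\mathcal{L}^2$-invertible if and only if $\hat{D}_{\alpha,\beta}$ is. By the previous Proposition and Remark, $\hat{D}_{\alpha,\beta}$ is self-adjoint with respect to the perturbed inner product $\langle\cdot,\cdot\rangle_{\alpha\beta}=\langle\cdot,\vert\tau L_{\alpha,\beta}\vert\,\cdot\rangle$, which is equivalent to the standard one because $\vert\tau L_{\alpha,\beta}\vert$ is bounded above and below; hence it suffices to show that $\hat{D}_{\alpha,\beta}$ is bounded below as a self-adjoint operator on $(\mathcal{L}^2(M\sqcup N),\langle\cdot,\cdot\rangle_{\alpha\beta})$, equivalently that the nonnegative self-adjoint operator $\hat{D}_{\alpha,\beta}^{2}$ is invertible there.

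First I would record, using $\delta_\alpha^{2}=0$ and $(\delta_\alpha^{*})^{2}=(S_{\alpha,\beta}\delta_\alpha S_{\alpha,\beta})^{2}=0$ (from $S_{\alpha,\beta}^{2}=1$), that on $dom(\hat{D}_{\alpha,\beta}^{2})$
\[
\hat{D}_{\alpha,\beta}^{2}=\delta_\alpha\delta_\alpha^{*}+\delta_\alpha^{*}\delta_\alpha ,
\]
the Laplacian of the Hilbert complex defined by the closed operator $\delta_\alpha$ (which is closed with $dom(\delta_\alpha)=dom(d_{M\sqcup N})$, since $T_f^{\dagger}$ is bounded), $\delta_\alpha^{*}$ being its honest adjoint for $\langle\cdot,\cdot\rangle_{\alpha\beta}$. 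Consequently, for $x\in dom(\hat{D}_{\alpha,\beta}^{2})$ the integration-by-parts identity gives
\[
\langle\hat{D}_{\alpha,\beta}^{2}x,x\rangle_{\alpha\beta}=\vert\vert\delta_\alpha x\vert\vert_{\alpha\beta}^{2}+\vert\vert\delta_\alpha^{*}x\vert\vert_{\alpha\beta}^{2}.
\]

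The core step is then the classical fact that an acyclic Hilbert complex has an invertible Laplacian. By the Lemma above $ker(\delta_\alpha)=im(\delta_\alpha)$; in particular $im(\delta_\alpha)$ is closed, so $\delta_\alpha$ restricts to a closed bijection from $dom(\delta_\alpha)\cap ker(\delta_\alpha)^{\perp}$ onto $im(\delta_\alpha)=ker(\delta_\alpha)$, which by the bounded inverse theorem has bounded inverse: there is $c>0$ with $\vert\vert\delta_\alpha x\vert\vert_{\alpha\beta}\geq c\,\vert\vert x\vert\vert_{\alpha\beta}$ for all such $x$. Passing to the adjoint, $ker(\delta_\alpha^{*})=im(\delta_\alpha)^{\perp}=ker(\delta_\alpha)^{\perp}$, and since $\delta_\alpha$ has closed range so does $\delta_\alpha^{*}$; thus $\delta_\alpha^{*}$ restricts to a closed bijection from $dom(\delta_\alpha^{*})\cap ker(\delta_\alpha)$ onto $ker(\delta_\alpha)^{\perp}$, yielding $c'>0$ with $\vert\vert\delta_\alpha^{*}y\vert\vert_{\alpha\beta}\geq c'\,\vert\vert y\vert\vert_{\alpha\beta}$ there. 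Now for $x\in dom(\hat{D}_{\alpha,\beta}^{2})$ write $x=x_1+x_2$ with $x_1\in ker(\delta_\alpha)$ and $x_2\in ker(\delta_\alpha)^{\perp}=ker(\delta_\alpha^{*})$; then $\delta_\alpha x=\delta_\alpha x_2$ and $\delta_\alpha^{*}x=\delta_\alpha^{*}x_1$, so
\[
\langle\hat{D}_{\alpha,\beta}^{2}x,x\rangle_{\alpha\beta}=\vert\vert\delta_\alpha x_2\vert\vert_{\alpha\beta}^{2}+\vert\vert\delta_\alpha^{*}x_1\vert\vert_{\alpha\beta}^{2}\geq\min(c^{2},c'^{2})\,(\vert\vert x_1\vert\vert_{\alpha\beta}^{2}+\vert\vert x_2\vert\vert_{\alpha\beta}^{2})=\min(c^{2},c'^{2})\,\vert\vert x\vert\vert_{\alpha\beta}^{2}.
\]
Hence $\hat{D}_{\alpha,\beta}^{2}$ is bounded below; being nonnegative and self-adjoint it is invertible, so $\hat{D}_{\alpha,\beta}$ is invertible, and therefore so is $D_{\alpha,\beta}$, first on $(\mathcal{L}^2(M\sqcup N),\langle\cdot,\cdot\rangle_{\alpha\beta})$ and then, by equivalence of norms, with respect to the standard $\mathcal{L}^2$-structure.

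I expect the main obstacle to be not a new estimate but the careful bookkeeping of domains and of the two inner products: one must check that $\delta_\alpha$ is genuinely closed with $dom(\delta_\alpha)=dom(d_{M\sqcup N})$, that $\pm S_{\alpha,\beta}\delta_\alpha S_{\alpha,\beta}$ is genuinely the $\langle\cdot,\cdot\rangle_{\alpha\beta}$-adjoint of $\delta_\alpha$ (already sketched in the previous Proposition), that the orthogonal decompositions and the closed-range duality are all taken consistently with respect to $\langle\cdot,\cdot\rangle_{\alpha\beta}$, and that $x_1,x_2$ above again lie in $dom(\delta_\alpha^{*})$ resp. $dom(\delta_\alpha)$ whenever $x\in dom(\hat{D}_{\alpha,\beta}^{2})$, so that the quadratic-form identities are legitimate. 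Once this is in place the only analytic input — closedness of $im(\delta_\alpha)$ — is handed over directly by the acyclicity Lemma, so no further geometric work is required.
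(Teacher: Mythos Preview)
Your argument is correct and follows the same overall strategy as the paper: reduce to the invertibility of $\hat{D}_{\alpha,\beta}$ via the bounded invertibility of $U_{\alpha,\beta}$ and $S_{\alpha,\beta}$, then exploit the acyclicity of $\delta_\alpha$ together with self-adjointness. The difference is in how the last step is executed. The paper argues spectrally: since $\hat{D}_{\alpha,\beta}$ is self-adjoint, its spectrum splits into discrete and essential parts; injectivity (from the previous Proposition) rules $0$ out of the discrete spectrum, and then Theorem~2.4 of Br\"uning--Lesch on Hilbert complexes is invoked as a black box to conclude that acyclicity of $\delta_\alpha$ keeps $0$ out of the essential spectrum. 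You instead unpack that black box by hand: the equality $\ker(\delta_\alpha)=\operatorname{im}(\delta_\alpha)$ gives closed range, the open mapping theorem produces lower bounds for $\delta_\alpha$ and $\delta_\alpha^{*}$ on the appropriate orthogonal complements, and the orthogonal splitting $\mathcal{L}^2=\ker(\delta_\alpha)\oplus\ker(\delta_\alpha^{*})$ yields a uniform lower bound for $\hat{D}_{\alpha,\beta}^{2}$. Your route is more self-contained and makes the mechanism transparent; the paper's route is shorter but relies on the reader knowing the Br\"uning--Lesch result. The domain bookkeeping you flag (that $x_1\in\operatorname{dom}(\delta_\alpha^{*})$ and $x_2\in\operatorname{dom}(\delta_\alpha)$ when $x\in\operatorname{dom}(\hat{D}_{\alpha,\beta})$) is indeed routine: $x_1\in\ker(\delta_\alpha)\subset\operatorname{dom}(\delta_\alpha)$ and $x_2\in\ker(\delta_\alpha^{*})\subset\operatorname{dom}(\delta_\alpha^{*})$, so $x_2=x-x_1\in\operatorname{dom}(\delta_\alpha)$ and $x_1=x-x_2\in\operatorname{dom}(\delta_\alpha^{*})$.
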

	\begin{proof}
		Since $U_{\alpha,\beta}$ and $S_{\alpha,\beta}$ are $L^2$-invertible, it is sufficient to prove that 
		$\hat{D}_{\alpha,\beta} := \delta_\alpha + \delta_\alpha^*$ is invertible.
		\\Since $\hat{D}_{\alpha,\beta}$ is self-adjoint, its spectrum can be decomposed in \textit{essential spectrum} given by the subset of $\lambda$s such that
		\begin{equation}
		\hat{D}_{\alpha,\beta} - \lambda Id
		\end{equation}
		is not a Fredholm operator and in \textit{discrete spectrum} which is the subset given by the eigenvalues with finite multiplicity.
		\\Since $\hat{D}_{\alpha,\beta}$ is injective, the zero can't be in in discrete spectrum.
		\\Now, using the Theorem 2.4. of \cite{Bruning} we have that, since $\delta_\alpha$ is acyclic, then   zero can't be in the essential spectrum of $\hat{D}_{\alpha,\beta}$. Then $\hat{D}_{\alpha,\beta}$ is invertible and the same holds for $D_{\alpha,\beta}$.
	\end{proof}
	\subsection{The perturbation in $C^*_f(M \sqcup N)^\Gamma$}\label{pert}
	\begin{prop}\label{vv}
		The perturbation $D_{\alpha, \beta} - D$ is an operator of $C_f^*(M\sqcup N)^\Gamma$.
	\end{prop}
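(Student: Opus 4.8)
The plan is to expand $D_{\alpha,\beta}-D_{M\sqcup N}$ along the definition of the perturbed signature operator and to exhibit it as a finite sum of bounded operators, each lying in $C^*_f(M\sqcup N)^\Gamma$, with the help of the propositions of Section~3.3. The guiding idea is that every correction term distinguishing $D_{\alpha,\beta}$ from $D_{M\sqcup N}$ is built out of $T_f$, $T_f^\dagger$ and $y$, which already belong to $C^*_f(M\sqcup N)^\Gamma$, and that the unbounded exterior-derivative factors appearing beside them are systematically absorbed through the intertwining relations $d\,T_f=T_f\,d$, $dy+yd=1-T_f^\dagger T_f$ and Corollary~\ref{corn}. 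Throughout we assume $\beta\neq0$ and $|\alpha|$ small, so that $U_{\alpha,\beta}$, $S_{\alpha,\beta}$ and $D_{\alpha,\beta}$ are defined.

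First I would record two membership facts. The operator $\delta_\alpha-d_{M\sqcup N}$ has a single nonzero block, namely $\pm\alpha i\,T_f^\dagger\gamma$, hence is a bounded element of $C^*_f(M\sqcup N)^\Gamma$ because $T_f^\dagger\in C^*_f(M\sqcup N)^\Gamma$, $\gamma\in D^*_f(M\sqcup N)^\Gamma$ and $C^*_f(M\sqcup N)^\Gamma$ is an ideal. The operator $L_{\alpha,\beta}-1$, whose blocks are $-T_f^\dagger T_f$, $\beta(\gamma+\alpha y)T_f^\dagger$, $\beta T_f(-\gamma-\alpha y)$ and $0$, is likewise a bounded element of $C^*_f(M\sqcup N)^\Gamma$, since each block is a product of a bounded operator with one of $T_f$, $T_f^\dagger$, $T_f y$, $y T_f^\dagger$. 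Consequently $\tau L_{\alpha,\beta}-\tau=\tau(L_{\alpha,\beta}-1)$ and $P-1$, where $P:=(\tau L_{\alpha,\beta})^2$, lie in $C^*_f(M\sqcup N)^\Gamma$; and since $P$ is an invertible positive element of $D^*_f(M\sqcup N)^\Gamma$ and continuous functional calculus commutes with the quotient map $D^*_f(M\sqcup N)^\Gamma\to D^*_f(M\sqcup N)^\Gamma/C^*_f(M\sqcup N)^\Gamma$, the operators $U_{\alpha,\beta}-1=P^{1/4}-1$, $U_{\alpha,\beta}^{-1}-1=P^{-1/4}-1$ and $S_{\alpha,\beta}-\tau=(\tau L_{\alpha,\beta}-\tau)P^{-1/2}+\tau(P^{-1/2}-1)$ all lie in $C^*_f(M\sqcup N)^\Gamma$.

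The crucial point is that each correction operator absorbs one derivative. For a continuous $h$ on $spec(P)$ write $h(P)-h(1)=(P-1)\tilde h(P)=\tilde h(P)(P-1)$ with $\tilde h$ continuous; then the boundedness of $d_{M\sqcup N}\big(h(P)-h(1)\big)$ and of $\big(h(P)-h(1)\big)d_{M\sqcup N}$ reduces to that of $d_{M\sqcup N}(P-1)$ and $(P-1)d_{M\sqcup N}$, and expanding $P-1$ in the blocks of $L_{\alpha,\beta}-1$ and using $d_{M\sqcup N}T_f=T_f d_{M\sqcup N}$, $dy+yd=1-T_f^\dagger T_f$, $\gamma d=-d\gamma$ together with Proposition~\ref{dT_fboun} and Corollary~\ref{corn}, one sees these are bounded and in $C^*_f(M\sqcup N)^\Gamma$. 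The corresponding statements with the codifferential $d^*_{M\sqcup N}=\pm\tau d_{M\sqcup N}\tau$ in place of $d_{M\sqcup N}$ follow by passing to adjoints, since $C^*_f(M\sqcup N)^\Gamma$ is $*$-closed. The same computation, with $T_f^\dagger\gamma$ instead, shows that $\delta_\alpha-d_{M\sqcup N}$ also absorbs one derivative from either side.

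To conclude, substitute $\delta_\alpha=d_{M\sqcup N}+E$, $S_{\alpha,\beta}=\tau+F$ and $U_{\alpha,\beta}^{\pm1}=1+A^{\pm}$, with $E,F,A^{\pm}$ bounded elements of $C^*_f(M\sqcup N)^\Gamma$, into $D_{\alpha,\beta}=U_{\alpha,\beta}\big(\delta_\alpha\mp S_{\alpha,\beta}\delta_\alpha S_{\alpha,\beta}\big)U_{\alpha,\beta}^{-1}$ and multiply out. The unique monomial that contains no factor from $\{E,F,A^{\pm}\}$ is $d_{M\sqcup N}-\tau d_{M\sqcup N}\tau=D_{M\sqcup N}$, which cancels against $D_{M\sqcup N}$; every remaining monomial contains such a factor, and each $d_{M\sqcup N}$ or $d^*_{M\sqcup N}$ next to it is turned into a bounded $C^*_f(M\sqcup N)^\Gamma$ operator by the previous step, while conjugation by $U_{\alpha,\beta}^{\pm1}=1+C^*_f(M\sqcup N)^\Gamma$ keeps us inside $C^*_f(M\sqcup N)^\Gamma$. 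Hence $D_{\alpha,\beta}-D_{M\sqcup N}$ is bounded and lies in $C^*_f(M\sqcup N)^\Gamma$, and it is $\Gamma$-equivariant since every building block commutes with the $\Gamma$-action. I expect the real difficulty to be the derivative-absorption step: one must verify, keeping careful track of domains, that $U_{\alpha,\beta}^{\pm1}$ and $S_{\alpha,\beta}$ preserve $dom(d_{M\sqcup N})$ up to a bounded graph correction, so that all the formal identities above hold between genuinely bounded operators; the identity $h(P)-h(1)=\tilde h(P)(P-1)$ combined with Corollary~\ref{corn} is what makes this possible, but the sign and degree bookkeeping for $\tau$ and $\gamma$, and the difference between the even and odd cases, is where the care lies. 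For the fully detailed computation one follows the model arguments of \cite{hils}, \cite{piazzaschick}, \cite{vito}, \cite{wahl}, \cite{fukum}.
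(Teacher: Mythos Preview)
Your proposal is correct and follows essentially the same two-step architecture as the paper's proof: first show that $U_{\alpha,\beta}-1$, $U_{\alpha,\beta}^{-1}-1$ and $S_{\alpha,\beta}-\tau$ lie in $C^*_f(M\sqcup N)^\Gamma$ via functional calculus applied to $L_{\alpha,\beta}=1+Q_{\alpha,\beta}$, then show that these correction terms absorb $d_{M\sqcup N}$ on either side by reducing to Corollary~\ref{corn}. The only cosmetic difference is that the paper uses holomorphic functional calculus with the explicit factorization $g(z)=\sqrt{1+z}-1=az+zh(z)z$ (so that $g(A)d=a(Ad)+Ah(A)(Ad)$), whereas you use continuous functional calculus via the quotient map $D^*_f\to D^*_f/C^*_f$ together with the one-sided factorization $h(P)-h(1)=\tilde h(P)(P-1)$; both devices serve the identical purpose of pushing the unbounded $d$ onto the block $P-1$ (equivalently $Q_{\alpha,\beta}$), where Corollary~\ref{corn} applies.
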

	\begin{proof}
		Observe that
		\begin{equation}
		\begin{split}
		D_{\alpha, \beta} - D &= U_{\alpha, \beta}(\delta_\alpha - S_{\alpha, \beta} \delta_\alpha S_{\alpha, \beta})U_{\alpha, \beta}^{-1} - d_{M \sqcup N} + \tau d_{M \sqcup N} \tau\\
		&= U_{\alpha, \beta}(\delta_\alpha - d_{M \sqcup N})U_{\alpha, \beta}^{-1} + (U_{\alpha, \beta}d_{M \sqcup N}U_{\alpha, \beta}^{-1} - d_{M \sqcup N}) \\
		&+U_{\alpha, \beta}(S_{\alpha, \beta}\delta_\alpha S_{\alpha, \beta} - \tau d_{M \sqcup N} \tau )U_{\alpha, \beta}^{-1} + (U_{\alpha, \beta}(\tau d_{M \sqcup N} \tau )U_{\alpha, \beta}^{-1} -\tau d_{M \sqcup N} \tau).
		\end{split}
		\end{equation}
		We know that $\delta_\alpha - \delta = \alpha T_f^\dagger$ is in $C_f^*(M\sqcup N)^\Gamma$.
		\\In the first step we show that $S_{\alpha, \beta} = \tau + H_{\alpha, \beta}$, $U_{\alpha, \beta} = 1 + G_{\alpha, \beta}$ and $U_{\alpha, \beta}^{-1} = 1 + K_{\alpha, \beta}$ where $H_{\alpha, \beta}$, $G_{\alpha, \beta}$, $K_{\alpha, \beta}$ are operators in $C_f^*(M \sqcup N)^\Gamma$.
		\\In order to prove this first step we follow Proposition 2.1.11. of \cite{vito}. Observe that $L_{\alpha, \beta} = 1 + Q_{\alpha,\beta}$, with $Q_{\alpha,\beta}$ in $C_f^*(M \sqcup N)^\Gamma$.
		\\This means that
		\begin{equation}
		\vert\tau L_{\alpha, \beta}\vert = \sqrt{1 + R_{\alpha, \beta}},
		\end{equation}
		where $R_{\alpha, \beta} = 2Q_{\alpha,\beta} + Q_{\alpha,\beta}^2$ in $C_f^*(M \sqcup N)^\Gamma$. Now, let us consider the complex function $g(z) := \sqrt{1 + z} - 1$. Since $g(0) = 0$, we have that
		\begin{equation}
		g(z) = az + zh(z)z
		\end{equation}
		where $h$ is a holomorphic function and $a$ is a number. Then we have that
		\begin{equation}
		\vert\tau L_{\alpha, \beta}\vert = 1 + g(R_{\alpha, \beta}) = 1 + V_{\alpha, \beta}
		\end{equation}
		with $V_{\alpha, \beta}$ in $C^*(M \sqcup N)^\Gamma$. Observe that
		\begin{equation}
		U_{\alpha, \beta} = \sqrt{\vert\tau L_{\alpha, \beta}\vert}
		\end{equation}
		Applying the same argument we obtain that
		\begin{equation}
		U_{\alpha, \beta} = 1 + G_{\alpha, \beta}.
		\end{equation}
		where $G_{\alpha, \beta}$ is in $C_f^*(M \sqcup N)^\Gamma$. Observe the operator
		\begin{equation}
		Z_{\alpha, \beta} = \vert\tau L_{\alpha, \beta}\vert^{-1} - 1
		\end{equation}
		is an operator of $D_f^*(M\sqcup N)^\Gamma$ since $\vert\tau L_{\alpha, \beta}\vert$ is in $D_f^*(M\sqcup N)^\Gamma$. In particular we have that
		\begin{equation}
		\begin{cases}
		\vert\tau L_{\alpha, \beta}\vert \circ \vert\tau L_{\alpha, \beta}\vert ^{-1} = (1 + V_{\alpha,\beta})(1 + Z_{\alpha, \beta}) =  1\\
		\vert\tau L_{\alpha, \beta}\vert ^{-1}  \circ \vert\tau L_{\alpha, \beta}\vert = (1 + Z_{\alpha, \beta})(1 + V_{\alpha,\beta}) = 1.
		\end{cases}
		\end{equation}
		and so we obtain that
		\begin{equation}
		Z_{\alpha, \beta} = - V_{\alpha, \beta} - V_{\alpha, \beta}Z_{\alpha, \beta} = - V_{\alpha, \beta} - Z_{\alpha, \beta} V_{\alpha, \beta}.
		\end{equation}
		Then we have that $Z_{\alpha, \beta}$ is in $C_f^*(M \sqcup N)^\Gamma$.
		\\Now, since
		\begin{equation}
		U_{\alpha, \beta}^{-1} = \sqrt{\vert\tau L_{\alpha, \beta}\vert^{-1}}
		\end{equation}
		then we have that
		\begin{equation}
		U_{\alpha, \beta}^{-1} = 1 + K_{\alpha, \beta}.
		\end{equation}
		where $K_{\alpha, \beta}$ is in $C_f^*(M \sqcup N)^\Gamma$.
		Moreover we also have that
		\begin{equation}
		\begin{split}
		S_{\alpha,\beta} &= \tau L_{\alpha, \beta} \vert\tau L_{\alpha, \beta}\vert^{-1} \\
		&= \tau(1 + Q_{\alpha, \beta})(1 + Z_{\alpha, \beta}) \\
		&= \tau + H_{\alpha, \beta},
		\end{split}
		\end{equation}
		where $H_{\alpha, \beta}$ in $C_f^*(M \sqcup N)^\Gamma$. This concludes the first step.
		\\
		\\In the second step, we have to prove that $G_{\alpha, \beta} d_{M \sqcup N}$, $H_{\alpha, \beta} d_{M \sqcup N}$, $K_{\alpha, \beta} d_{M \sqcup N}$, $d_{M \sqcup N}G_{\alpha, \beta}$, $d_{M \sqcup N}H_{\alpha, \beta}$, $d_{M \sqcup N}K_{\alpha, \beta}$,$G_{\alpha, \beta} d_{M \sqcup N} G_{\alpha, \beta}$ and $H_{\alpha, \beta} d_{M \sqcup N} K_{\alpha, \beta}$ are in $C_f^*(M\sqcup N)^\Gamma$.
		\\
		\\In order to prove this we have to observe that $L_{\alpha, \beta} = 1 + Q_{\alpha,\beta}$ where
		\begin{equation}
		Q_{\alpha, \beta} = \begin{bmatrix} \beta T_f^\dagger T_f && (1 - i \alpha \gamma y)\beta T_f^\dagger \\
		\beta T_f(1 + i \alpha \gamma y) && 0 \end{bmatrix}
		\end{equation}
		is an algebraic combination of $T_f$, $T_f^\dagger$, $T_fy$, $yT_f^\dagger$ and $\gamma$. Then we have that $Q_{\alpha, \beta}d_{M \sqcup N}$ and $d_{M \sqcup N}Q_{\alpha, \beta}$ is an operator in $C_f^*(M \sqcup N)^\Gamma$ (Corollary \ref{corn}). The same property, obviously, holds for $R_{\alpha, \beta}$.
		\\Observe that if $A$ is an operator in $C_f^*(M \sqcup N)^\Gamma$ such that $Ad$ and $dA$ are operators in $C^*_f(M \sqcup N)^\Gamma$ then also $g(A)d_{M \sqcup N}$ and $d_{M \sqcup N}g(A)$ are in $C_f^*(M \sqcup N)^\Gamma$, indeed
		\begin{equation}
		g(A) d_{M \sqcup N} = (aA + Ah(A)A) d_{M \sqcup N} = a(Ad_{M \sqcup N}) + Ah(A)(Ad_{M \sqcup N})
		\end{equation}
		and
		\begin{equation}
		d_{M \sqcup N}g(A) = d_{M \sqcup N}(aA + Ah(A)A) = a(d_{M \sqcup N}A) + (d_{M \sqcup N}A)h(A)A.
		\end{equation}
		So we have that the compositions of $V_{\alpha, \beta}$ and $G_{\alpha,\beta}$ with $d_{M \sqcup N}$ are operators in $C_f^*(M \sqcup N)^\Gamma$. Moreover, since
		\begin{equation}
		Z_{\alpha, \beta} = - V_{\alpha, \beta} - V_{\alpha, \beta}Z_{\alpha, \beta} = - V_{\alpha, \beta} - Z_{\alpha, \beta} V_{\alpha, \beta}
		\end{equation}
		then this property also holds for $Z_{\alpha, \beta}$, $K_{\alpha, \beta}$ and $H_{\alpha, \beta}$.
		Now, since $D_{\alpha, \beta} - D$ is an algebraic combination of operators in $C_f^*(M \sqcup N)^\Gamma$, the perturbation is in $C_f^*(M \sqcup N)^\Gamma$.
	\end{proof}
	\subsection{Involutions}
		Observe that the operator $W_{\alpha, \beta} := U_{\alpha,\beta}S_{\alpha, \beta}U_{\alpha, \beta}^{-1}$ is a well-defined, self-adjoint, involution whenever $\alpha \neq 0$ and $\beta=1$ or when $\alpha = 0$ and $\beta \in [0,1]$.
		\\We have that $\mathcal{L}^2(M \sqcup N)$ can be split, for any $\alpha$ and $\beta$, in
		\begin{equation}
		\mathcal{L}^2(M \sqcup N) = V_{+, \alpha, \beta} \oplus V_{-, \alpha, \beta}
		\end{equation}
		where $V_{\pm, \alpha, \beta}$ are the $\pm1$-eigenvalues of $W_{\alpha, \beta} := U_{\alpha,\beta}S_{\alpha, \beta}U_{\alpha, \beta}^{-1}$. Respect to this decomposition we have that
		\begin{equation}
		D_{\alpha, \beta} = \begin{bmatrix} 0 && D_{\alpha, \beta +} \\
		D_{\alpha,\beta -} && 0 \end{bmatrix}.
		\end{equation}
		Consider a real value $\alpha_0$ such that $D_{\alpha_0, 1}$ is invertible, we can define $\gamma: [0,1] \longrightarrow \mathbb{R}^2$ as $\gamma(t):= (\alpha(t), \beta(t))$ where
		\begin{equation}
		\alpha(t) = \begin{cases} 0 \text{   if    } t \in [0,\frac{1}{2}] \\
		2t\alpha_0 \text{    otherwise} \end{cases} \text{    and   } 
		\beta(t) = \begin{cases} 2t \text{   if    } t \in [0,\frac{1}{2}] \\
		1 \text{    otherwise} \end{cases}
		\end{equation}
		Then we consider the map $W_\gamma : [0,1] \longrightarrow D_f^*(M \sqcup N)^\Gamma$ defined as
		\begin{equation}
		W_{\gamma(t)} := U_{\gamma(t)}S_{\gamma(t)}U_{\gamma(t)}^{-1}.
		\end{equation}
		We want to prove that $W_\gamma$ is a continuous function and that for all $t_1$, $t_2$ we have that
		\begin{equation}
		W_{\gamma(t_1)} - W_{\gamma(t_2)} \in C_f^*(M \sqcup N)^\Gamma \label{stat}.
		\end{equation}
		The statement (\ref{stat}) can be checked by observing that for all $t$ in $[0,1]$ we have that $W_{\gamma(t)} - \tau$ is in $C_f^*(M \sqcup N)^\Gamma$.
		\\in oreder to check the continuity of $W_{\gamma(t)}$ in $t$, observe that for all $t$ in $[0,1]$ we have that
		\begin{equation}
		W_{t} := \sqrt{\vert\tau L_{\gamma(t)}\vert} \circ \frac{\tau L_{\gamma(t)}}{\vert\tau L_{\gamma(t)}\vert} \circ \sqrt{\vert\tau L_{\gamma(t)}\vert^{-1}}.
		\end{equation}
		We know that $\tau L_{\gamma(t)}$ is continuous in $t$ and the same holds for its square. We also know that
		\begin{equation}
		\vert\tau L_{\gamma(t)}\vert = 1 + f(1 - \tau L_{\gamma(t)}^2)
		\end{equation}
		where $f(z) = az + zh(z)z$ and $h$ is an holomorphic function. Since the properties of holomorphic functional calculus on bounded operator, we have that if $T_k \rightarrow T$ then $f(T_k) \rightarrow f(T)$. In particular we have that $\vert\tau L_{\gamma(t)}\vert$ is continuous in $t$. Moreover exactly with the same argument we also have that $\sqrt{\vert\tau L_{\gamma(t)}\vert}$ is continuous in $t$.
		\\The operator $\vert\tau L_{\gamma(t)}\vert^{-1}$ is continuous in $t$ because for all $t$ the operator $\vert\tau L_{\gamma(t)}\vert$ is invertible with bounded inverse and the function $z \rightarrow \frac{1}{z}$ is holomorphic in every bounded open set of $\mathbb{C}$ which doesn't contains the $0$. Finally also $\sqrt{\vert\tau L_{\gamma(t)}\vert^{-1}}$ is continuous. Then we have that $W_t$ is continuous in $t$. Moreover, since $[0,1]$ is compact, we have that $W_t$ is uniformly continuous, i.e. exists $C > 0$ such that
		\begin{equation}
		\vert\vert W_{t} - W_{t + \epsilon}\vert\vert \leq C \epsilon.
		\end{equation}
	\begin{lem}\label{Cos}
		Let $f: (M,g) \longrightarrow (N,h)$ be a smooth and lipschitz uniform homotopy equivalence between two Riemannian manifolds of bounded geometry. Consider the splitting given by
		\begin{equation}
		\mathcal{L}^2(M \sqcup N) = V_+ \oplus V_-
		\end{equation}
		where $V_\pm$ is the $\pm 1$-eigenspace of $\tau$. Then if $\alpha_0$ is such that $D_{\alpha_0, 1}$ is invertible, then there is an isometry $\mathcal{U}_{\alpha_0, \pm}: V_{\pm, \alpha_0, 1} \longrightarrow V_\pm$ (which implies $\mathcal{U}_{\alpha_0,\pm}^{*}\mathcal{U}_{\alpha_0,\pm} = I$) and $\mathcal{U}_{\alpha_0, \pm}$ is the restriction to $V_{\pm, \alpha_0, 1}$ of the operator $\frac{I \pm \tau}{2} + P_{\alpha_0}$ where $P_{\alpha_0}$ is an operator in $C_f^*(M \sqcup N)^\Gamma$.
	\end{lem}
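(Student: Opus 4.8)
The plan is to produce a single $\Gamma$-equivariant unitary $u$ in $D_f^*(M \sqcup N)^\Gamma$ with $u - 1 \in C_f^*(M \sqcup N)^\Gamma$ which conjugates the orthogonal projection onto $V_+$ to the orthogonal projection onto $V_{+, \alpha_0, 1}$, and then to read off $\mathcal{U}_{\alpha_0, \pm}$ from $u^*$. First I would set $p_t := \frac{1}{2}(1 + W_{\gamma(t)})$ for $t \in [0,1]$. By the discussion preceding the statement, $t \mapsto p_t$ is a uniformly norm-continuous path of $\Gamma$-invariant self-adjoint projections in $D_f^*(M \sqcup N)^\Gamma$ with $p_0 = \frac{1}{2}(1 + \tau)$ the projection onto $V_+$, with $p_1 = \frac{1}{2}(1 + W_{\alpha_0, 1})$ the projection onto $V_{+, \alpha_0, 1}$, and with $p_s - p_t \in C_f^*(M \sqcup N)^\Gamma$ for all $s, t$ (this is where $W_{\gamma(t)} - \tau \in C_f^*(M \sqcup N)^\Gamma$ is used). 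Using uniform continuity I would fix a partition $0 = t_0 < t_1 < \dots < t_m = 1$ with $\| p_{t_{i+1}} - p_{t_i}\| < 1$ for each $i$.

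The second step is the classical Riesz interpolation between nearby projections. For each $i$ put $z_i := p_{t_{i+1}} p_{t_i} + (1 - p_{t_{i+1}})(1 - p_{t_i})$. One computes $z_i = 1 + c_i$ with $c_i = (p_{t_{i+1}} - p_{t_i})(2 p_{t_i} - 1) \in C_f^*(M \sqcup N)^\Gamma$, that $z_i p_{t_i} = p_{t_{i+1}} z_i$, and that $z_i^* z_i$ commutes with $p_{t_i}$; since $\| p_{t_{i+1}} - p_{t_i}\| < 1$ the operator $z_i$ is invertible, so its polar part $v_i := z_i |z_i|^{-1}$ is a unitary with $v_i p_{t_i} v_i^* = p_{t_{i+1}}$. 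Writing $|z_i| = (1 + c_i^* + c_i + c_i^* c_i)^{1/2}$ and applying the holomorphic functional calculus identity $\sqrt{1 + z} - 1 = a z + z h(z) z$ exactly as in the proof of Proposition \ref{vv}, one gets $|z_i| \in 1 + C_f^*(M \sqcup N)^\Gamma$ and hence $|z_i|^{-1} \in 1 + C_f^*(M \sqcup N)^\Gamma$, so that $v_i \in 1 + C_f^*(M \sqcup N)^\Gamma$; moreover each $v_i$ is $\Gamma$-equivariant and lies in $D_f^*(M \sqcup N)^\Gamma$, since this algebra is a unital $C^*$-algebra closed under continuous functional calculus. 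Setting $u := v_{m-1} v_{m-2} \cdots v_0$ we obtain a $\Gamma$-equivariant unitary in $D_f^*(M \sqcup N)^\Gamma$ with $u p_0 u^* = p_1$, hence $u V_+ = V_{+, \alpha_0, 1}$ and, passing to orthogonal complements, $u V_- = V_{-, \alpha_0, 1}$; and since $C_f^*(M \sqcup N)^\Gamma$ is an ideal of $D_f^*(M \sqcup N)^\Gamma$, expanding the product shows $u - 1 \in C_f^*(M \sqcup N)^\Gamma$.

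For the last step I would define $\mathcal{U}_{\alpha_0, \pm} := u^*|_{V_{\pm, \alpha_0, 1}} : V_{\pm, \alpha_0, 1} \longrightarrow V_\pm$. Since $u^*$ is unitary and $u^* p_1 = p_0 u^*$, it carries $V_{\pm, \alpha_0, 1}$ isometrically onto $V_\pm$, so $\mathcal{U}_{\alpha_0, \pm}^* \mathcal{U}_{\alpha_0, \pm} = I$. To match the prescribed shape, take $P_{\alpha_0} := (u^* - 1) - \frac{1}{2}\tau(W_{\alpha_0, 1} - \tau)$, which lies in $C_f^*(M \sqcup N)^\Gamma$ because $u^* - 1 \in C_f^*(M \sqcup N)^\Gamma$, $W_{\alpha_0, 1} - \tau \in C_f^*(M \sqcup N)^\Gamma$ and $\tau \in D_f^*(M \sqcup N)^\Gamma$. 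Using $\tau^2 = 1$ together with the fact that $W_{\alpha_0, 1}$ acts as $\pm 1$ on $V_{\pm, \alpha_0, 1}$, a one-line computation gives $\left(\frac{I \pm \tau}{2} + P_{\alpha_0}\right)\xi = u^* \xi$ for every $\xi \in V_{\pm, \alpha_0, 1}$, i.e. $\mathcal{U}_{\alpha_0, \pm}$ is the restriction of $\frac{I \pm \tau}{2} + P_{\alpha_0}$ to $V_{\pm, \alpha_0, 1}$.

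The heart of the argument, and the only place where more than formal manipulation is needed, is the verification that the Riesz unitaries $v_i$ stay in $1 + C_f^*(M \sqcup N)^\Gamma$: there one has to combine the norm-continuity of $t \mapsto W_{\gamma(t)}$ (which allows $[0,1]$ to be split into subintervals on which consecutive projections are at distance $< 1$) with the fact that the increments of the path lie in the ideal $C_f^*(M \sqcup N)^\Gamma$ and with the square-root functional-calculus bookkeeping already done in Proposition \ref{vv}. Keeping track of $\Gamma$-equivariance costs nothing, since every operator in sight is obtained from $\Gamma$-invariant operators by algebraic operations and continuous functional calculus. If one prefers not to assume $W_{\gamma(0)} = \tau$ literally, one prepends to the path the short segment from $\frac{1}{2}(1 + \tau)$ to $p_0$ and applies the same interpolation, using that these projections differ by an element of $C_f^*(M \sqcup N)^\Gamma$.
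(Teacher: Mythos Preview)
Your proof is correct and follows essentially the same strategy as the paper: partition the norm-continuous path $t\mapsto W_{\gamma(t)}$ using uniform continuity, build transition isometries between consecutive eigenspace decompositions via polar decomposition, and verify that each factor lies in $1+C_f^*(M\sqcup N)^\Gamma$ by the square-root functional-calculus argument of Proposition~\ref{vv}. The only difference is packaging: the paper works with the restricted maps $F_i:=\tfrac{I+W_{t_i}}{2}\big|_{V_{+,t_{i+1}}}$ and treats the $+$ and $-$ cases separately, whereas you use the global Riesz interpolant $z_i=p_{t_{i+1}}p_{t_i}+(1-p_{t_{i+1}})(1-p_{t_i})$, which handles both eigenspaces at once and yields a single $P_{\alpha_0}$ serving both signs.
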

	\begin{proof}
		We will prove the assertion just for the $+$ case. The minus case is exactly the same.
		\\Consider the operator $W_{\gamma(t)}$. Since it is uniformly continuous in $t$, we can divide $[0,1]$ in $N_0$ intervals $[t_i, t_{i+1}]$ where $t_0 = 0$ and $t_{N_0}= 1$ and 
		\begin{equation}
		\vert\vert W_{t_i} - W_{t_{i+1}}\vert\vert \leq 1.
		\end{equation}
		We know that for all $t$ the operator $W_t$ is an involution and we have an orthogonal decomposition $\mathcal{L}^2(M \sqcup N) = V_{+, \alpha(t), \beta(t)} \oplus V_{-, \alpha(t), \beta(t)}$, where $V_{\pm, \alpha(t), \beta(t)}$ is the $\pm 1$-eigenspace of $W_t$. The projection on $V_{\pm, \alpha(t), \beta(t)}$ can be written as
		\begin{equation}
		\frac{I \pm W_t}{2}.
		\end{equation}
		Let us now consider $F_i$ the restriction of $\frac{I + W_{t_{i}}}{2}$ to $V_{+, \alpha(t_{i+1}), \beta(t_{i+1})}$. 
		\\Our next step is to prove that $F_i: V_{+, \alpha(t_{i+1}), \beta(t_{i+1})}\longrightarrow V_{+, \alpha(t_i), \beta(t_i)}$ is invertible. In order to prove this fact we have to consider the operator $G_i$ given by the restriction of $\frac{I + W_{t_{i+1}}}{2}$ to $V_{+, \alpha(t_{i}), \beta(t_{i})}$.
		\\Consider now $H_i := W_{t_i} - W_{t_{i+1}}$. Then we have that for all $v$ in $V_{+, \alpha(t_{i+1}), \beta(t_{i+1})}$
		\begin{equation}
		\begin{split}
		G_i \circ F_i (v) &=  (\frac{I + W_{t_{i+1}}}{2})(\frac{I + W_{t_{i}}}{2})v \\
		&=  (\frac{I + W_{t_{i+1}}}{2})(\frac{I + W_{t_{i+1}} + H_i}{2})v \\
		&=  \frac{I + W_{t_{i+1}}}{2} v + (\frac{I + W_{t_{i+1}}}{4})H_iv \\
		&=  Iv + (\frac{I + W_{t_{i+1}}}{4})H_iv.
		\end{split}
		\end{equation}
		Now, since
		\begin{equation}
		\vert\vert(\frac{I + W_{t_{i+1}}}{4})H_i\vert\vert \leq \vert\vert \frac{I + W_{t_{i+1}}}{2}\vert\vert \cdot \vert\vert\frac{H_i}{2}\vert\vert \leq 1 \cdot \frac{1}{2}\vert\vert W_{t_{i+1}} - W_{t_{i}}\vert\vert \leq \frac{1}{2},
		\end{equation}
		we have that  $G_i \circ F_i$ is invertible. Then $F_i$ is injective. With exactly with the same argument one can prove that $F_i \circ G_i$ is invertible and so $F_i$ is also surjective.
		\\Let us now consider the isometry $U_i: V_{+, \alpha(t_{i+1}) , \beta(t_{i+1})} \longrightarrow V_{+, \alpha(t_i), \beta(t_i)}$ as
		\begin{equation}
		U_i:= \frac{F_i}{\vert F_i\vert}.
		\end{equation}
		Finally we can define the isometry as the following composition
		\begin{equation}
		\mathcal{U}_{\alpha_0, +} := U_0 \circ U_{1} \circ ... \circ U_n : V_{+, \alpha_0, 1} \longrightarrow V_{+}.
		\end{equation}
		Now we have to prove that $\mathcal{U}_{\alpha_0, +}$ is the restriction to $V_{+, \alpha_0, 1}$ of an operator $I + P_{\alpha_0}$ where $P_{\alpha_0}$ is an operator in $C_f^*(M \sqcup N)^\Gamma$. First we have to observe that $G_i = F_i^*$, indeed if $v$ is a vector of $V_{+, \alpha(t_{i+1}) , \beta(t_{i+1})}$ and $w$ is a vector of $V_{+, \alpha(t_i), \beta(t_i)}$ then observe that
		\begin{equation}
		\langle v, \frac{I + W_{t_{i+1}}}{2} w \rangle = \langle v, w \rangle = \langle \frac{I + W_{t_{i}}}{2} v, w \rangle.
		\end{equation}
		This means that $F_i^* F_i(v) = Iv + (\frac{I + W_{t_{i+1}}}{4})H_iv$. Then, since $H_i$ is in $C_f^*(M \sqcup N)^\Gamma$, we have that
		\begin{equation}
		F_i^* F_i = (I + L)_{\vert_{V_{+, \alpha(t_{i+1}) , \beta(t_{i+1})}}},
		\end{equation}
		where $L$ is an on operator in $C_f^*(M \sqcup N)^\Gamma$. So, exactly as we did in Proposition \ref{vv}, we can prove that
		\begin{equation}
		\vert F_i\vert^{-1} = (I + Q)_{\vert_{V_{+, \alpha(t_{i+1}) , \beta(t_{i+1})}}},
		\end{equation}
		where $Q$ is an operator in $C_f^*(M \sqcup N)^\Gamma$. This means
		\begin{equation}
		\begin{split}
		U_i &= \frac{F_i}{\vert F_i\vert} = (\frac{I + W_t}{2})(I + Q)\\
		&= (\frac{I + \tau}{2} + \frac{1}{2}H_{\alpha(t),\beta(t)})(I + Q)= \frac{I + \tau}{2} + C_f^*(M \sqcup N)^\Gamma
		\end{split}
		\end{equation}
		Then we have that
		\begin{equation}
		\mathcal{U}_{\alpha_0} = (\frac{I + \tau}{2})^n + C_f^*(M \sqcup N)^\Gamma = \frac{I + \tau}{2} + C_f^*(M \sqcup N)^\Gamma.
		\end{equation}
	\end{proof}
	\subsection{Uniform homotopy invariance of the Roe Index of signature operator}
Exactly as in the connected case, we have the following definition.
	\begin{defn}
		The \textbf{fundamental class of $D_{M \sqcup N}$} is $[D_{M \sqcup N}] \in K_{n+1}(\frac{D_f^{*}(M \sqcup N)^{\Gamma}}{C_f^{*}(M \sqcup N)^\Gamma})$ given by
		\begin{equation}
		[D_{M \sqcup N}] := \begin{cases} 
		
		[\frac{1}{2}(\chi (D_{M \sqcup N})+1)] \text{if $n$ is odd,}
		\\ [\chi(D_{M \sqcup N})_+] \text{if $n$ is even.}
		
		\end{cases}
		\end{equation}
	\end{defn}
\begin{rem}
Again the definition in the even case is well-given since we are considering $\chi(D_{M \sqcup N})_+$ in $B(H_{M \sqcup N})$ where $H_{M \sqcup N}$ is defined as in Example \ref{terna}.
\end{rem}
	\begin{defn}\label{doubleindex}
	The \textbf{Roe index} of $D_{M \sqcup N}$ is the class
	\begin{equation}
		{Ind_{Roe}(D_{M \sqcup N}) := \delta[D_{M \sqcup N}]}
	\end{equation}
	in $K_{n}(C_f^{*}(M \sqcup N)^{\Gamma})$, where $\delta$ is the connecting homomorphism in K-Theory.
\end{defn}
	\begin{prop}
		Let $(M,g)$ and $(N,h)$ be two manifolds of bounded geometry and let $f:(M,g) \longrightarrow (N,h)$ be a $C^2_{b}$-map which is a smooth uniform homotopy equivalence. Then if $n$ is odd we have that in $K^*(\frac{D_f^*(M \sqcup N)^\Gamma}{C_f^*(M\sqcup N)^\Gamma})$
		\begin{equation}
		[D_{M \sqcup N}] = [\frac{1}{2}(\chi(D_{M \sqcup N}) +1)] = [\frac{1}{2}(\chi(D_{\alpha,\beta}) +1)]
		\end{equation}
		and if $n$ is even
		\begin{equation}
		[D_{M \sqcup N}] = [\chi(D_{M \sqcup N})_+] = [ \mathcal{U}_{\alpha_0,-} \chi(D_{\alpha_0,1})\mathcal{U}^*_{\alpha_0,+}].
		\end{equation} 
	\end{prop}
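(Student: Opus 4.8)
The first equality in each case is simply the definition of the fundamental class $[D_{M\sqcup N}]$, so the whole content is the second equality, i.e.\ the comparison between the unperturbed signature operator $D_{M\sqcup N}$ and the perturbed one $D_{\alpha,\beta}$. My plan rests on two ingredients that are already in place: Proposition~\ref{vv}, which says that $P:=D_{\alpha,\beta}-D_{M\sqcup N}$ lies in $C_f^*(M\sqcup N)^\Gamma$ whenever $|\alpha|$ is small and $\beta\neq 0$; and the standard stability of the bounded functional calculus under a perturbation in the Roe ideal. Concretely, since $D_{M\sqcup N}$ and $D_{\alpha,\beta}$ are self-adjoint for the standard scalar product and $P^\dagger=P\in C_f^*(M\sqcup N)^\Gamma$, a Duhamel/resolvent argument of the type used in \cite{piazzaschick} and \cite{vito} (see also Proposition~3.6 of \cite{Roeindex}) shows that, for every chopping function $\chi$,
\begin{equation}
\chi(D_{\alpha,\beta})-\chi(D_{M\sqcup N})\in C_f^*(M\sqcup N)^\Gamma ;
\end{equation}
in particular $\chi(D_{\alpha,\beta})\in D_f^*(M\sqcup N)^\Gamma$, and since $\chi^2-1\in C_0(\mathbb{R})$ the operator $\chi(D_{\alpha,\beta})$ is a self-adjoint involution modulo $C_f^*(M\sqcup N)^\Gamma$. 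I would establish this lemma first, as the technical engine of the proof.

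In the odd case the conclusion is then immediate: $\frac{1}{2}(\chi(D_{\alpha,\beta})+1)$ and $\frac{1}{2}(\chi(D_{M\sqcup N})+1)$ are projections in the quotient $C^*$-algebra $D_f^*(M\sqcup N)^\Gamma/C_f^*(M\sqcup N)^\Gamma$ whose difference is the image of an element of $C_f^*(M\sqcup N)^\Gamma$, hence zero, so they represent the same class in $K_0$ of the quotient, which is $K_{n+1}$ since $n$ is odd.

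For the even case I would first observe that $W_{\alpha_0,1}=U_{\alpha_0,1}S_{\alpha_0,1}U_{\alpha_0,1}^{-1}$ anticommutes with $D_{\alpha_0,1}$: indeed $S_{\alpha_0,1}$, being an involution, anticommutes with $\delta_{\alpha_0}-S_{\alpha_0,1}\delta_{\alpha_0}S_{\alpha_0,1}$. As $\chi$ is odd, $\chi(D_{\alpha_0,1})$ is therefore off-diagonal for the splitting $\mathcal{L}^2(M\sqcup N)=V_{+,\alpha_0,1}\oplus V_{-,\alpha_0,1}$ into $\pm1$-eigenspaces of $W_{\alpha_0,1}$, its $(+)$-component $\chi(D_{\alpha_0,1})_+\colon V_{+,\alpha_0,1}\to V_{-,\alpha_0,1}$ being unitary modulo $C_f^*(M\sqcup N)^\Gamma$. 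Conjugating by the isometries $\mathcal{U}_{\alpha_0,\pm}$ of Lemma~\ref{Cos} gives the operator $\mathcal{U}_{\alpha_0,-}\chi(D_{\alpha_0,1})_+\mathcal{U}_{\alpha_0,+}^*\colon V_+\to V_-$, still unitary modulo $C_f^*(M\sqcup N)^\Gamma$, which through the shift construction of Remark~\ref{girl} (applied to the $(M\sqcup N,\Gamma)$-module of Example~\ref{terna}) determines a class in $K_1$, i.e.\ $K_{n+1}$. To identify that class with $[\chi(D_{M\sqcup N})_+]$ I would work everywhere modulo $C_f^*(M\sqcup N)^\Gamma$: from the Involutions subsection $W_{\alpha_0,1}-\tau\in C_f^*(M\sqcup N)^\Gamma$, so the two gradings $\frac{I\pm W_{\alpha_0,1}}{2}$ and $\frac{I\pm\tau}{2}$ coincide modulo the ideal; by Lemma~\ref{Cos}, $\mathcal{U}_{\alpha_0,\pm}=\frac{I\pm\tau}{2}+P_{\alpha_0}$ with $P_{\alpha_0}$, and hence $P_{\alpha_0}^*$, in the ideal; and $\chi(D_{\alpha_0,1})-\chi(D_{M\sqcup N})$ lies in the ideal by the first paragraph. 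Since $\chi(D_{M\sqcup N})$ is already off-diagonal for $\tau$ (because $D_{M\sqcup N}$ anticommutes with $\tau$), combining these gives
\begin{equation}
\mathcal{U}_{\alpha_0,-}\chi(D_{\alpha_0,1})_+\mathcal{U}_{\alpha_0,+}^*-\chi(D_{M\sqcup N})_+\in C_f^*(M\sqcup N)^\Gamma ,
\end{equation}
so the two operators represent the same class, and the even case follows.

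The routine parts of this are the Duhamel estimate of the first paragraph and the algebra of reductions modulo $C_f^*(M\sqcup N)^\Gamma$. The point I expect to cost the most work is the even case: one has to make precise, at the level of the quotient $C^*$-algebra and of the module of Example~\ref{terna}, that conjugating $\chi(D_{\alpha_0,1})_+$ by the $\mathcal{U}_{\alpha_0,\pm}$ and then reducing modulo the ideal is compatible with the shift realization of Remark~\ref{girl} that is used to define $[\chi(D_{M\sqcup N})_+]$, so that $\mathcal{U}_{\alpha_0,-}\chi(D_{\alpha_0,1})_+\mathcal{U}_{\alpha_0,+}^*$ genuinely represents the same $K_1$-class as the $(+)$-component of $\chi(D_{M\sqcup N})$.
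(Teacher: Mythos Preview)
Your proposal is correct and follows essentially the same route as the paper: the paper invokes Lemma~5.8 of \cite{higroe} (with $A=D_f^*(M\sqcup N)^\Gamma$, $J=C_f^*(M\sqcup N)^\Gamma$, $C=D_{\alpha,\beta}-D$) to obtain $\chi(D_{\alpha,\beta})-\chi(D_{M\sqcup N})\in C_f^*(M\sqcup N)^\Gamma$, which is exactly the content of your Duhamel/resolvent step, and then in the even case combines this with Lemma~\ref{Cos} and Remark~\ref{girl} precisely as you do. The only cosmetic difference is that the paper packages the perturbation lemma as a citation rather than re-deriving it, and does not spell out the anticommutation of $W_{\alpha_0,1}$ with $D_{\alpha_0,1}$ that you (correctly) make explicit.
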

	\begin{proof} 
		Let us start with the odd case. It's sufficient to apply Lemma 5.8 of \cite{higroe} posing $A = D_f^*(M \sqcup N)^\Gamma$, $J = C_f^*(M \sqcup N)^\Gamma$, $D$ is the signature operator, $g= \chi$ and $C = D_{\alpha, \beta} - D$. Then we obtain that
		\begin{equation}
		\chi(D_{\alpha,\beta}) - \chi(D) \in C^*_f(M \sqcup N)^\Gamma
		\end{equation}
		and so they are the same element in $\frac{D_f^*(M \sqcup N)^\Gamma}{C_f^*(M \sqcup N)^\Gamma}$ and they induce the same element in $K$-theory.
		\\
		\\For the even case it is sufficient, because of \ref{Cos}, to remind that
		\begin{equation}
		\mathcal{U}_{\alpha_0,\pm} - \frac{I \pm \tau}{2} \in C_f^*(M \sqcup N)^\Gamma
		\end{equation}
		Then, applying again Lemma 5.8 of \cite{higroe}, we also know that
		\begin{equation}
		\chi(D_{\alpha_0,1}) - \chi(D) \in C_f^*(M \sqcup N)^\Gamma
		\end{equation}
		and this means that
		\begin{equation}\label{poi}
		\chi(D_{M \sqcup N})_+ - \mathcal{U}_{\alpha_0,-} \chi(D_{\alpha_0,1})\mathcal{U}^*_{\alpha_0,+}
		\end{equation}
		is in $C_f^*(M \sqcup N)^\Gamma$. Then we conclude applying Remark \ref{girl}. Indeed the perturbation (\ref{poi}) can be seen in
		\begin{equation} 
			C^*_{\rho_{M \sqcup N}}(M \sqcup N, H_{M \sqcup N})^\Gamma
		\end{equation}
	where $H_{M \sqcup N}$ and $\rho_{M \sqcup N}$ are defined as in Example \ref{terna}.
	\end{proof}
	Observe that since $D_{\alpha_0, 1}$ is invertible, one can choose as function $\chi$ a function which is $1$ on $spec(D) \cap (0, + \infty)$ e $\chi \equiv -1$ on $spec(D) \cap (-\infty, 0)$. This means that the following definition is well-given.
	\begin{defn}
		The \textbf{$\rho_f$-class} of $D$ is the class of $K_{n+1}(D_f^*(M \sqcup N)^\Gamma)$
		\begin{equation}
		\rho_f = \begin{cases} 
		[\frac{1}{2}(\chi (D_{\alpha_0, 1})+1)] \text{if n is odd,}
		\\ [(\mathcal{U}_{\alpha_0,-}) \chi(D_{\alpha_0,1})(\mathcal{U}^*_{\alpha_0,+})] \text{if n is even.}
		\end{cases}
		\end{equation} 
	\end{defn}
	\begin{prop}
		Let $(M,g)$ and $(N,h)$ be two manifolds of bounded geometry and consider a group $\Gamma$ acting FUPD by isometries on $M$ and $N$. Let $f:(M,g) \longrightarrow (N,h)$ be a $\Gamma$-equivariant smooth and lipschitz uniform homotopy equivalence which preserves the orientation. Then if $f$ is a $C^2_{b,u}$-map, then the Roe index of $M \sqcup N$ is zero.
	\end{prop}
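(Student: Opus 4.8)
The plan is to show that the Roe index $Ind_{Roe}(D_{M \sqcup N}) = \delta[D_{M \sqcup N}]$ vanishes by exhibiting $[D_{M \sqcup N}]$ as the image, under the quotient map $\pi_\star: K_{n+1}(D_f^*(M \sqcup N)^\Gamma) \longrightarrow K_{n+1}(D_f^*(M \sqcup N)^\Gamma/C_f^*(M \sqcup N)^\Gamma)$, of the $\rho_f$-class. Since $\delta$ is the connecting homomorphism of the six-term sequence attached to $0 \to C_f^*(M \sqcup N)^\Gamma \to D_f^*(M \sqcup N)^\Gamma \to D_f^*(M \sqcup N)^\Gamma/C_f^*(M \sqcup N)^\Gamma \to 0$, exactness gives $\delta \circ \pi_\star = 0$, and the conclusion is immediate.

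Concretely, I would first invoke the previous Proposition to replace $[D_{M \sqcup N}]$ by the class built from the perturbed operator $D_{\alpha_0,1}$: in the odd case $[\frac{1}{2}(\chi(D_{\alpha_0,1})+1)]$, in the even case $[\mathcal{U}_{\alpha_0,-}\chi(D_{\alpha_0,1})\mathcal{U}^*_{\alpha_0,+}]$. Because $D_{\alpha_0,1}$ is $L^2$-invertible there is a spectral gap around $0$, so I would choose the chopping function $\chi$ to be identically $-1$ on the part of the spectrum in $(-\infty,0)$ and identically $+1$ on the part in $(0,+\infty)$; then $\chi(D_{\alpha_0,1})$ is an honest self-adjoint involution. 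Moreover it lies in $D_f^*(M \sqcup N)^\Gamma$, and not merely in the quotient: $\chi(D_{M \sqcup N}) \in D_f^*(M \sqcup N)^\Gamma$ since the finite-propagation and pseudolocality of $D_{M\sqcup N}$ on each component survive passing to the coarser $f$-structure, while $\chi(D_{\alpha_0,1}) - \chi(D_{M \sqcup N}) \in C_f^*(M \sqcup N)^\Gamma$ by Lemma 5.8 of \cite{higroe} applied to the perturbation $D_{\alpha_0,1} - D_{M \sqcup N}$, which belongs to $C_f^*(M \sqcup N)^\Gamma$ by Proposition \ref{vv} (here the $C^2_b$ hypothesis enters, via the boundedness of $dT_f$). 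Hence $\frac{1}{2}(\chi(D_{\alpha_0,1})+1)$ is a genuine projection of $D_f^*(M \sqcup N)^\Gamma$; in the even case, writing $\mathcal{U}_{\alpha_0,\pm} = \frac{I\pm\tau}{2} + P_{\alpha_0}$ with $P_{\alpha_0} \in C_f^*(M \sqcup N)^\Gamma$ via Lemma \ref{Cos} and using $\tau \in D_f^*(M \sqcup N)^\Gamma$, the operator $\mathcal{U}_{\alpha_0,-}\chi(D_{\alpha_0,1})\mathcal{U}^*_{\alpha_0,+}$ is a genuine unitary of $D_f^*(M \sqcup N)^\Gamma$. This is precisely the $\rho_f$-class, and by construction $\pi_\star(\rho_f) = [D_{M \sqcup N}]$.

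Therefore $Ind_{Roe}(D_{M \sqcup N}) = \delta[D_{M \sqcup N}] = \delta(\pi_\star(\rho_f)) = 0$. I expect the only delicate point to be the verification that $\chi(D_{\alpha_0,1})$ (respectively the conjugated unitary) genuinely sits inside $D_f^*(M \sqcup N)^\Gamma$ and is handled correctly with respect to the $f$-coarse structure rather than the metric one; but this is already packaged in Proposition \ref{vv}, Lemma \ref{Cos}, the earlier discussion of chopping functions, and Remark \ref{girl}, so beyond assembling these ingredients there is no essential obstacle.
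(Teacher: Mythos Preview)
Your proposal is correct and follows essentially the same approach as the paper: both arguments use the long exact $K$-theory sequence and the identification $[D_{M\sqcup N}] = p_\star(\rho_f)$ (your $\pi_\star$) to conclude $Ind_{Roe}(D_{M\sqcup N}) = \delta\circ p_\star(\rho_f) = 0$ by exactness. The paper's proof is a terse two-line version of what you wrote; your additional discussion of why $\chi(D_{\alpha_0,1})$ and the conjugated unitary genuinely sit in $D_f^*(M\sqcup N)^\Gamma$ merely unpacks what the paper has already packaged into the preceding Proposition and the definition of the $\rho_f$-class.
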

	\begin{proof}
		Consider the long exact sequence in $K$-theory
		\begin{equation}
		.... \xrightarrow{i_\star} K_n(D_f^*(M \sqcup N)^\Gamma) \xrightarrow{p_\star} K_n(\frac{D_f^*(M \sqcup N)^\Gamma}{C_f^*(M \sqcup N)^\Gamma}) \xrightarrow{\delta} K_{n+1}(C_f^*(M \sqcup N)^\Gamma) \longrightarrow ...
		\end{equation}
		Now, we know that $Ind_{Roe}(D_{M \sqcup N}) = \delta([D_{M \sqcup N}]$. But in this case we know that
		$\delta([D_{M \sqcup N}] = \delta(p_\star(\rho_f(D_{M \sqcup N}))) = 0$. Then the Roe index vanishes.
	\end{proof} 
	We are ready now to prove the uniform homotopy invariance of the Roe index of the signature operator.
	\begin{thm}
		Let $(M,g)$ and $(N,h)$ be two manifolds of bounded geometry. Let $\Gamma$ be a group acting uniformly proper, discontinuous and free on $M$ and $N$ by orientation-preserving isometries. Consider $f:(M,g) \longrightarrow (N,h)$ a $\Gamma$-equivariant uniform homotopy equivalence which preserves the orientations. Then
		\begin{equation}
		f_\star(Ind_{Roe}(D_M)) = Ind_{Roe}(D_N).
		\end{equation}
	\end{thm}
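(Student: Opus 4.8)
The plan is to derive the identity $f_\star(Ind_{Roe}(D_M)) = Ind_{Roe}(D_N)$ from the vanishing $Ind_{Roe}(D_{M \sqcup N}) = 0$ of the previous Proposition, by producing a push-forward map $H \colon K_n(C^*_f(M \sqcup N)^\Gamma) \longrightarrow K_n(C^*(N)^\Gamma)$ for which $Ind_{Roe}(D_{M \sqcup N})$ splits as the difference $Ind_{Roe}(D_N) - f_\star(Ind_{Roe}(D_M))$. As a preliminary step I would replace $f$ by the $\Gamma$-equivariant, orientation-preserving, smooth lipschitz approximation with uniformly bounded derivatives provided by Proposition 1.7 of \cite{Spes}: this approximation $f'$ satisfies $f' \sim_\Gamma f$, so $f'_\star = f_\star$ on $K_n(C^*(M)^\Gamma)$ by (\ref{ops}), while neither $Ind_{Roe}(D_M)$ nor $Ind_{Roe}(D_N)$ depends on $f$. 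From here on $f$ is assumed to be of this regularity, so that the $f$-coarse structure of Example \ref{coarseexem}, the algebras $C^*_f(M \sqcup N)^\Gamma \subseteq D^*_f(M \sqcup N)^\Gamma$, the operator $D_{M \sqcup N}$ and its Roe index are all available and $Ind_{Roe}(D_{M \sqcup N}) = 0$.

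Next I would set up the relevant coarse maps. The two inclusions $j_M \colon M \hookrightarrow M \sqcup N$ and $j_N \colon N \hookrightarrow M \sqcup N$, together with $f \sqcup id_N \colon M \sqcup N \longrightarrow N$, are $\Gamma$-equivariant continuous coarse maps: for $f \sqcup id_N$ this is immediate, since closeness in the $f$-coarse structure is by definition tested through $f \sqcup id_N$ and since $f$ is uniformly proper; for $j_M$ and $j_N$ it follows because $f$ is a coarse map between manifolds of bounded geometry. Choosing equivariant isometries uniformly covering these maps and applying the $Ad_V$ construction, I obtain homomorphisms $(j_M)_\star$, $(j_N)_\star$ and $H := (f \sqcup id_N)_\star$ on the corresponding $K$-theory groups; since $(f \sqcup id_N) \circ j_M = f$ and $(f \sqcup id_N) \circ j_N = id_N$, functoriality of the coarse push-forward gives $H \circ (j_M)_\star = f_\star$ and $H \circ (j_N)_\star = \mathrm{id}$.

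The heart of the argument is then the identity
\begin{equation}
Ind_{Roe}(D_{M \sqcup N}) = (j_N)_\star\bigl(Ind_{Roe}(D_N)\bigr) - (j_M)_\star\bigl(Ind_{Roe}(D_M)\bigr) \qquad \text{in } K_n(C^*_f(M \sqcup N)^\Gamma).
\end{equation}
To establish it I would use that $D_{M \sqcup N} = D_N \oplus (-D_M)$ and the chirality $\tau = \tau_N \oplus (-\tau_M)$ are block diagonal for the orthogonal splitting $\mathcal{L}^2(M \sqcup N) = \mathcal{L}^2(N) \oplus \mathcal{L}^2(M)$ (and, in the even case, that the module of Example \ref{terna} for $M \sqcup N$ decomposes as the module of $N$ direct sum the module of $M$ built with the reversed chirality $-\tau_M$), that the block-diagonal subalgebra $D^*(N)^\Gamma \oplus D^*(M)^\Gamma$ embeds into $D^*_f(M \sqcup N)^\Gamma$ compatibly with the ideals $C^*$, and that the embedding isometries coarsely and topologically cover $j_N$ and $j_M$. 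Since $\chi(D_{M \sqcup N}) = \chi(D_N) \oplus \chi(-D_M)$ is block diagonal, the fundamental class $[D_{M \sqcup N}]$ is the image of $[D_N] \oplus [-D_M]$ under the embedding of the quotient algebras, and applying the connecting homomorphism $\delta$ (which commutes with $*$-homomorphisms) yields the identity above, once one checks that reversing both the operator and the chirality on the $M$-summand negates the index, i.e. that this summand contributes $-Ind_{Roe}(D_M)$. The latter is a short $K$-theory computation: in the even case $[\chi(-D_M)_+] = [-(\chi(D_M)_+)^*] = -[\chi(D_M)_+]$, and in the odd case $[\frac{1}{2}(\chi(-D_M) + 1)] = [1] - [\frac{1}{2}(\chi(D_M) + 1)]$ with $\delta[1] = 0$; the prefactor $i^{p(\dim M)}$ in $d_{M \sqcup N}$ merely rescales the operator and does not affect the class. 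Here orientation-preservation of $f$ and of $\Gamma$ is what makes the chirality conventions on $M$, $N$ and $M \sqcup N$ consistent.

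Finally, applying $H$ to the displayed identity and using $Ind_{Roe}(D_{M \sqcup N}) = 0$ together with $H \circ (j_N)_\star = \mathrm{id}$ and $H \circ (j_M)_\star = f_\star$, I get
\begin{equation}
0 = H\bigl(Ind_{Roe}(D_{M \sqcup N})\bigr) = Ind_{Roe}(D_N) - f_\star\bigl(Ind_{Roe}(D_M)\bigr),
\end{equation}
which is the assertion. I expect the main obstacle to be the block decomposition of the fundamental class and of the Roe index inside the $f$-coarse algebra — and, in even dimensions, inside the $\mathbb{Z}$-graded module of Example \ref{terna}, where the reversal of $\tau_M$ must be tracked carefully — together with getting the sign exactly right; the remaining steps are formal consequences of the functoriality of the coarse push-forward and of the already-proved vanishing of $Ind_{Roe}(D_{M \sqcup N})$.
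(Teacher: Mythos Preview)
Your proposal is correct and follows the paper's argument closely: approximate $f$ by a $C^2_b$ map, use the vanishing of $Ind_{Roe}(D_{M\sqcup N})$, decompose the fundamental class block-diagonally with the appropriate sign, and push forward to $N$. The only difference is in packaging: you build the push-forward as the functorial map $H=(f\sqcup id_N)_\star$ and read off $H\circ(j_M)_\star=f_\star$ and $H\circ(j_N)_\star=\mathrm{id}$ from the identities $(f\sqcup id_N)\circ j_M=f$ and $(f\sqcup id_N)\circ j_N=id_N$, whereas the paper writes $H$ as the explicit map $A\mapsto VA_{MM}V^*+A_{NN}$ for a fixed isometry $V$ covering $f$ and computes $H_\star[D_{M\sqcup N}]=f_\star[D_M]-[D_N]$ directly in $K_{*}(D^*_f/C^*_f)$ before applying $\delta$; your functorial version is a bit cleaner and avoids checking multiplicativity of the explicit formula by hand. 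One small correction: the natural inclusion isometry $\mathcal{L}^2(M)\hookrightarrow\mathcal{L}^2(M\sqcup N)$ coarsely covers $j_M$ but does \emph{not} topologically cover it (for $g\in C_0(M\sqcup N)$ the defect $\rho(g)-V\rho(g|_M)V^*=0\oplus g|_N$ is generally not compact), so do not claim that; this is harmless for your argument, since the block-diagonal embedding is in any case a genuine $*$-homomorphism on $D^*$ and on $D^*/C^*$, and its identification with $(j_M)_\star$ is only needed at the level of $K_*(C^*)$, where coarse covering alone suffices.
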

	\begin{proof}
		Since $\Gamma$ acts FUPD, all $\Gamma$-equivariant uniform maps are coarsely approximable by $\Gamma$-equivariant $C^2_b$-maps (Proposition 1.7 of \cite{Spes}). So we consider $f$ as a $C^2_b \cap C^{\infty}$ uniform homotopy equivalence. 
		\\Consider a $\Gamma$-equivariant isometry $V: \mathcal{L}^2(N) \longrightarrow \mathcal{L}^2(M)$ which uniformly covers $f$. We know that such an operator exists by Proposition 4.3.5. of \cite{siegel}. Then we know that
		\begin{equation}
		f_\star([D_M]) = Ad_{V,\star}[D_M] = [V D_M V^*],
		\end{equation}
		where we have the $u$ if $n$ is odd and we haven't it if $n$ is even.
		\\Let us consider now $C_f^*(M \sqcup N)^\Gamma$. Consider the decomposition
		\begin{equation}
		D_f^*(M \sqcup N)^\Gamma = \bigoplus\limits_{X,Y = M,N} D^*(M \sqcup N)_{X,Y}^\Gamma,
		\end{equation}
		where
		\begin{equation}
		 D^*(M \sqcup N)_{X,Y}^\Gamma := D_f^*(M \sqcup N)^\Gamma \cap B(\mathcal{L}^2(X), \mathcal{L}^2(Y)).
		\end{equation}
		Let us define now the operator $H: D_f^*(M \sqcup N)^\Gamma \longrightarrow D^*(N)^\Gamma$ defined  as
		\begin{equation}
		H(\begin{bmatrix} A_{MM} && A_{MN} \\ A_{NM} && A_{NN} \end{bmatrix})=  VA_{MM}V^* + A_{NN}.
		\end{equation}
		where $A_{XY} \in  D^*(M \sqcup N)_{X,Y}^\Gamma$. $H$ is a $*$-homomorphism, and so it means that $H$ induce a map between the $K$-theory groups. In particular we have that $H(C_f^*(M \sqcup N)^\Gamma) \subset C^*(N)^\Gamma$. This means that $H$ also induces some morphism between the Coarse algebra and the quotient algebras. Let us consider $H : \frac{D_f^*(M \sqcup N)^\Gamma}{C_f^*(M \sqcup N)^\Gamma} \longrightarrow \frac{D^*(N)^\Gamma}{C^*(N)^\Gamma}$.
		\\Observe that, if $n$ is odd then in  we have that
		\begin{equation}
		\begin{split}
		\frac{1}{2} (\chi(D_{M \sqcup N}) + 1) &= \begin{bmatrix} \frac{1}{2} (\chi(D_M) + 1) && 0 \\ 0 && - \frac{1}{2} (\chi(D_N) + 1)  \end{bmatrix}
		\\
 &= \begin{bmatrix} \frac{1}{2} (\chi(D_M) + 1) && 0 \\ 0 && 0  \end{bmatrix} - \begin{bmatrix} 0 && 0 \\ 0 && \frac{1}{2} (\chi(D_N) + 1)  \end{bmatrix} =: \tilde{D}_M - \tilde{D}_N.
		\end{split}
		\end{equation}
		and so in $K_{n+1}(\frac{D_f^*(M \sqcup N)^\Gamma}{C_f^*(M \sqcup N)^\Gamma})$ we have
		\begin{equation}
		[D_{M \sqcup N}] = [\tilde{D}_M] - [\tilde{D}_N].
		\end{equation}
		Let us suppose that $n$ is even, we have that
		\begin{equation}
		\begin{split}
		\chi(D_M)_+ &= \begin{bmatrix} \chi(D_M)_+ && 0 \\ 0 && \chi(D_N)_+  \end{bmatrix}
		= \begin{bmatrix} \chi(D_M)_+ && 0 \\ 0 && 1  \end{bmatrix} \cdot \begin{bmatrix} 0 && 0 \\ 0 &&  \chi(D_N)_+ \end{bmatrix} \\
 &=: \tilde{D_M}\cdot \tilde{D_N}.
		\end{split}
		\end{equation}
		and so, again, in $K_{n+1}(\frac{D_f^*(M \sqcup N)^\Gamma}{C_f^*(M \sqcup N)^\Gamma})$ we have\footnote{We are considering $K_1$ as generated by unitaries and so, if $u,v$ are two unitaries we have that $[uv] = [u] + [v]$. See pg 106 of \cite{Anal}.}
		\begin{equation}
		[D_{M \sqcup N}] = [\tilde{D}_M] - [\tilde{D}_N].
		\end{equation}
		Then, in both cases, we obtain that
		\begin{equation}
		H_\star[D_{M \sqcup N}] = H_{\star}[\tilde{D}_M] - H_{\star}[\tilde{D}_N] = f_\star[D_M] - [D_N].
		\end{equation}
		Then we conclude by applying the connecting homomorphism, indeed
		\begin{equation}
		    \begin{split}
		      0 &= H_\star \delta_{M \sqcup N}[D_{M \sqcup N}] \\
		      &= H_\star \delta_{M \sqcup N} [\tilde{D}_M] - H_\star \delta_{M \sqcup N}[\tilde{D}_N] \\
		      &= \delta_N  H_\star [\tilde{D}_M] - \delta_N  H_\star[\tilde{D}_N] \\
		      &= \delta_N f_\star[D_M] - \delta_N [D_N]\\
		      &= f_\star \delta_M[D_M] - \delta_N [D_N]\\
		      &= f_\star (Ind_{Roe}(D_M)) - Ind_{Roe}(D_N).
		    \end{split}
		\end{equation}
		\end{proof}

	\phantomsection
	\bibliographystyle{sapthesis} 
	\addcontentsline{toc}{section}{\bibname}
	\end{document}